\DeclareMathAlphabet{\eusm}{OT1}{eusm}{m}{n}
\newtheorem{theor}{Theorem}[section]
\newtheorem{prop}[theor]{Proposition}
\newtheorem{defi}[theor]{Definition}
\newtheorem{cor}[theor]{Corollary}
\newtheorem{lem}[theor]{Lemma}
\newtheorem{exam}[theor]{Example}
\newtheorem{remark}[theor]{Remark}
\def\Ker{\mbox{Ker\/}}
\newcommand{\Hom}{{\rm Hom}}
\newcommand{\A}{\mathcal A}
\begin{document}
\title[Split objects in abelian categories]{Split objects with respect to a fully invariant \\ short exact sequence in abelian categories}

\author[S. Crivei]{Septimiu Crivei}
\address {Faculty of Mathematics and Computer Science, Babe\c{s}-Bolyai University, Str. M. Kog\u alniceanu 1,
400084 Cluj-Napoca, Romania}
\email{crivei@math.ubbcluj.ro}

\author[D. Kesk\.{i}n T\"ut\"unc\"u]{Derya Kesk\.{i}n T\"ut\"unc\"u}
\address {Department of Mathematics, Hacettepe University, 06800 Beytepe, Ankara, Turkey} \email{keskin@hacettepe.edu.tr}

\author[R. Tribak]{Rachid Tribak}
\address {Centre R\'{e}gional des M\'{e}tiers de L'Education et de la Formation (CRMEF)-Tanger \\
Avenue My Abdelaziz, Souani, B.P.:3117 \\ Tangier 90000, Morocco} \email{tribak12@yahoo.com}

\subjclass[2010]{18E10, 18E15, 16D90}
\keywords{Abelian category, fully invariant short exact sequence, 
(dual) (strongly) $F$-split object, (dual) (strongly) Rickart object.}

\begin{abstract} We introduce and investigate (dual) relative split objects
with respect to a fully invariant short exact sequence in abelian categories.
We compare them with (dual) relative Rickart objects,
and we study their behaviour with respect to direct sums and 
classes all of whose objects are (dual) relative split.
We also introduce and study (dual) strongly relative split objects. 
Applications are given to Grothendieck categories, module and comodule categories.
\end{abstract}

\dedicatory{Dedicated to the memory of Professor John Clark}

\date{March 13, 2018}

\maketitle


\section{Introduction}

Right Rickart rings have the root in the work of Rickart \cite{Rickart} on certain Banach algebras. 
In an arbitrary ring-theoretic setting they were considered by Maeda \cite{Maeda}, who defined them as 
rings in which the right annihilator of any element is generated by an idempotent. 
Independently, Hattori \cite{Hattori} defined right p.p. rings as rings in which every principal right ideal is projective.
Later on, it turned out that the two concepts are actually equivalent, and their theory was further 
developed by many mathematicians. Some classical examples of right Rickart rings include 
right semihereditary rings, Baer rings and von Neumann regular rings. 

A natural development of the theory of Rickart rings was towards module theory. 
Thus, Lee, Rizvi and Roman introduced and extensively studied Rickart modules and dual Rickart modules in a series of papers
\cite{LRR10,LRR11,LRR12}, which extended the theory of Baer modules \cite{RR04} and dual Baer modules \cite{KT}. 
A further generalization was considered by Crivei and K\"or \cite{C-K}, 
who investigated relative Rickart objects and dual relative Rickart objects in abelian categories (also, see \cite{CO,CO1}). 
If $M$ and $N$ are objects of an abelian category, then $N$ is called $M$-Rickart if for every morphism $f:M\to N$, 
${\rm ker}(f)$ is a section, while $N$ is called dual $M$-Rickart if for every morphism $f:M\to N$, 
${\rm coker}(f)$ is a retraction. Their motivation was to set a unified theory of relative Rickart objects 
with versatile applications, which allows one to deduce naturally properties of dual relative Rickart objects 
(by the duality principle), relative regular objects (which are relative Rickart and dual relative Rickart)
in the sense of D\u asc\u alescu, N\u ast\u asescu, Tudorache and D\u au\c s \cite{DNT,DNTD} as well as 
relative Baer objects (as particular relative Rickart objects) and dual relative Baer objects (by the duality principle) 
\cite{C-K,CO,CO1}. In recent years, Rickart modules and dual Rickart modules were generalized 
to (dual) $t$-Rickart modules by Asgari and Haghany \cite{AH},
(dual) $T$-Rickart modules by Ebrahimi Atani, Khoramdel and Dolati Pish Hesari \cite{Atani12, Atani16}
and (dual) $F$-inverse split modules by Ungor, Halicioglu and Harmanci \cite{Ungor,Ungor2}, 
the latter developing a theory using arbitrary fully invariant submodules. 

In the present paper we study (dual) relative split objects
with respect to a fully invariant short exact sequence in abelian categories. 
Note that any preradical $r$ of a category gives rise to a fully invariant short exact sequence 
$0\to r(M)\to M\to M/r(M)\to 0$ for every object $M$.  
If $M$ is an object and $0\to F\stackrel{i}\to N\stackrel{d}\to C\to 0$ is a fully invariant short exact sequence, 
then $N$ is $M$-$F$-split if and only if for every morphism $g:M\to N$, ${\rm ker}(dg)$ is a section, 
while $N$ is dual $M$-$F$-split if and only if for every morphism $g:N\to M$, ${\rm coker}(gi)$ is a retraction. 
For $M=N$, one has the notion of (dual) self-$F$-split object. 
We also introduce a strong version of (dual) relative split object, which generalizes (dual) strong Rickartness
in the sense of \cite{CO1,Atani14,WangY}.  
Relative split objects generalize all the above Rickart-type modules as well as extending modules. 
Note that $N$ is $M$-Rickart if and only if $N$ is $M$-$0$-split, while
$N$ is dual $M$-Rickart if and only if $N$ is dual $M$-$M$-split. 
Moreover, $M$ is (dual) self-$F$-split if and only if $M\cong F\oplus C$ and $C$ is self-Rickart ($F$ is dual self-Rickart). 
When $F=r(M)$ for some (pre)radical $r$ of $\A$, this shows that self-$F$-split and dual self-$F$-split objects 
are some particular objects for which their (pre)torsion part splits off (see the splitting problem discussed by 
Chase \cite{Chase}, Goodearl \cite{Goodearl}, Kaplansky \cite{Kap}, N\u ast\u asescu and Torrecillas \cite{NT} 
or Teply \cite{Teply}). Applications are given to Grothendieck categories, module and comodule categories.

Our results will usually have two parts, out of which we only prove the first one, 
the second one following by the duality principle in abelian categories. 
In Section 2 we present some needed terminology and properties 
on fully invariant short exact sequences in abelian categories. In Section 3 we introduce 
(strongly) relative split objects with respect to a fully invariant short exact sequence 
$0\to F\to N\to C\to 0$ in an abelian category $\A$. We show that 
an object $M$ of $\A$ is strongly self-$F$-split if and only if  
$M$ is self-$F$-split and every direct summand of $M$ which contains $F$ is fully invariant.
We prove that every (strongly) self-$F$-split object has the summand intersection property 
for (fully invariant) direct summands containing $F$. One of our main results shows that 
if $e:M\to M'$ is an epimorphism, $m:N'\to N$ is a monomorphism,   
the inclusion monomorphism $u:F\cap N'\to N'$ is fully invariant and
$N$ is (strongly) $M$-$F$-split, then $N'$ is (strongly) $M'$-$(F\cap N')$-split. 
In particular, $N$ is (strongly) $M$-$F$-split if and only if for every direct summand $M_1$ of $M$ and 
for every direct summand $N_1$ of $N$, $N_1$ is (strongly) $M_1$-$(F\cap N_1)$-split.
In Section 4 we compare relative self-$F$-split objects and relative self-Rickart objects. 
We prove that $M$ is (strongly) self-$F$-split if and only if $M\cong F\oplus C$ and $C$ is (strongly) self-Rickart.
We also show that $M$ is strongly self-$F$-split if and only if $M$ is self-$F$-split and the ring 
${\rm End}_{\mathcal{A}}(C)$ is abelian. In Section 5 we deal with coproducts of relative split objects. 
If $M$ and $N$ are objects of $\A$, $N=\bigoplus_{k=1}^n N_k$ is a direct sum decomposition, 
and $0\to F\to N\to C\to 0$ is a fully invariant short exact sequence in $\A$, then it is shown that 
$N$ is (strongly) $M$-$F$-split if and only if $N_k$ is (strongly) $M$-$(F\cap N_k)$-split for every $k\in \{1,\dots,n\}$.
Also, if $r$ is a preradical of $\A$, $M$ is an object of $\A$ having the strong summand intersection property 
for (fully invariant) direct summands containing $r(M)$ and $(N_k)_{k\in K}$ is a family of objects of $\A$,
then $\bigoplus_{k\in K} N_k$ is (strongly) $M$-$r(\bigoplus_{k\in K} N_k)$-split if and only if 
$N_k$ is (strongly) $M$-$r(N_k)$-split for every $k\in K$. 
In Section 6 we study classes all of whose objects are self-$F$-split. We characterize spectral categories, 
locally finitely generated Grothendieck categories which are $V$-categories or regular categories, 
(semi)hereditary categories with enough projectives and co(semi)hereditary categories with enough injectives
in terms of self-$F$-splitness or dual self-$F$-splitness of objects of certain classes. 
If an abelian category $\A$ has a generator $G$ and enough injectives, and $r$ is a radical of $\A$, 
then we characterize when $G/r(G)$ is (semi)hereditary. 
Also, if $\A$ has an injective cogenerator $G$ and $r$ is a preradical of $\A$, then we establish equivalent conditions 
for $G$ to be (strongly) self-$r(G)$-split. Finally, in Section 7 we give some further applications 
to module and comodule categories. Among them, we show that every (strongly) extending module $N$ 
is (strongly) self-$F$-split, where $F$ is the second singular submodule of $N$.

Let us finally note that the transfer via functors between abelian categories 
of the (dual) relative splitness of objects with respect to a fully invariant short exact sequence
is studied in the separate paper \cite{CKT2}.

\section{Fully invariant short exact sequences}

Let $\mathcal{A}$ be an abelian category. For every morphism $f:M\to N$ in $\mathcal{A}$ we denote 
its kernel, cokernel, coimage and image by ${\rm ker}(f):{\rm Ker}(f)\to M$, 
${\rm coker}(f):N\to {\rm Coker}(f)$, ${\rm coim}(f):M\to {\rm Coim}(f)$ 
and ${\rm im}(f):{\rm Im}(f)\to N$ respectively. Since $\mathcal{A}$ is an abelian category, 
one has ${\rm Coim}(f)\cong {\rm Im}(f)$. 
For a short exact sequence $0\to A\to B\to C\to 0$ in $\A$, we sometimes write $C=B/A$.
Recall that a morphism $f:M\to N$ is called a \emph{section} (\emph{retraction}) 
if there exists a morphism $f':N\to M$ such that $f'f=1_M$ ($ff'=1_N$).

Now we recall some terminology on fully invariant kernels from \cite{CO1}, 
and we establish a series of properties, which generalize corresponding properties of 
fully invariant submodules of modules, and will be needed in the next sections of the paper.

\begin{defi}{\cite[Definition~2.2]{CO1}} \rm Let $\mathcal{A}$ be an abelian category.
\begin{enumerate}
\item A kernel (monomorphism) $i:K\to M$ in $\mathcal{A}$ is called \emph{fully invariant} if for every morphism $h:M\to M$,
$hi$ factors through $i$.
\item A cokernel (epimorphism) $d:M\to C$ in $\mathcal{A}$ is called \emph{fully coinvariant} if for every morphism $h:M\to M$,
$dh$ factors through $d$.
\end{enumerate}
\end{defi}

\begin{lem}{\cite[Lemma~2.5]{CO1}} \label{l:eq} Let $\mathcal{A}$ be an abelian category.
Then a kernel $i:K\to M$ in $\mathcal{A}$ is fully invariant
if and only if its cokernel $d:M\to C$ is fully coinvariant.
\end{lem}

\begin{defi} \rm Let $\mathcal{A}$ be an abelian category. A short exact sequence
$0\to A\stackrel{i}\to B\stackrel{d}\to C\to 0$ in $\mathcal{A}$ is called \emph{fully invariant} if
$i$ is fully invariant, or equivalently, $d$ is fully coinvariant.
\end{defi}

\begin{exam}{\cite[Example~2.3]{CO1}} \rm Consider the category ${\rm Mod}(R)$ 
of unitary right modules over a ring $R$ with identity. Then a kernel $i:K\to M$ is
fully invariant if and only if for every endomorphism $h:M\to M$,
$hi=i\alpha$ for some homomorphism $\alpha:K\to K$ if and only if
for every endomorphism $h:M\to M$, $h(K)={\rm Im}(hi)\subseteq {\rm Im}(i)=K$ if and only if
$K$ is a fully invariant submodule of $M$.
\end{exam}

\begin{lem}{\cite[Lemma~2.4]{CO1}} \label{l:trans} Let $\mathcal{A}$ be an abelian category.
\begin{enumerate}
\item The composition of two fully invariant kernels in $\A$ is a fully invariant kernel.
\item The composition of two fully coinvariant cokernels in $\A$ is a fully coinvariant cokernel.
\end{enumerate}
\end{lem}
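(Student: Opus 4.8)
The plan is to prove part~(1) directly by a short diagram chase, and then to obtain part~(2) by the duality principle in abelian categories (or, equivalently, by translating through Lemma~\ref{l:eq}). So let $i\colon K\to M$ and $j\colon L\to K$ be fully invariant kernels in $\mathcal{A}$; the goal is to show that the composite $ij\colon L\to M$ is again a fully invariant kernel.

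First I would dispose of the word ``kernel'': since $i$ and $j$ are monomorphisms, so is $ij$, and in the abelian category $\mathcal{A}$ every monomorphism is the kernel of its own cokernel, so $ij$ is indeed a kernel. It then remains to check full invariance. Fix an arbitrary morphism $h\colon M\to M$. Full invariance of $i$ applied to $h$ yields a morphism $\alpha\colon K\to K$ with $hi=i\alpha$. Composing on the right with $j$ gives $h(ij)=(hi)j=i(\alpha j)$, where $\alpha j\colon L\to K$ is a morphism with codomain $K$.

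Next I would invoke full invariance of $j$, but applied to the endomorphism $\alpha$ of $K$ rather than to $h$: since $j\colon L\to K$ is fully invariant, $\alpha j$ factors through $j$, say $\alpha j=j\beta$ for some $\beta\colon L\to L$. Substituting, $h(ij)=i(\alpha j)=i(j\beta)=(ij)\beta$, so $h(ij)$ factors through $ij$. As $h$ was arbitrary, $ij$ is fully invariant, which proves~(1).

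For part~(2), the composition of two fully coinvariant cokernels being a fully coinvariant cokernel is precisely the statement obtained from~(1) by reversing all arrows (epimorphisms dualize to monomorphisms, and ``$dh$ factors through $d$'' dualizes to ``$hi$ factors through $i$''); it therefore follows by the duality principle, or alternatively by passing to cokernels via Lemma~\ref{l:eq}. The only subtlety in the argument is the bookkeeping with the directions of the factorization morphisms: one must feed the endomorphism $\alpha$ of $K$ produced by full invariance of the outer kernel $i$ into the full-invariance property of the inner kernel $j$, and not conversely. Beyond that, I expect no real obstacle.
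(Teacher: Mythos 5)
Your argument is correct: composing the endomorphism $\alpha$ of $K$ obtained from full invariance of $i$ with the full-invariance property of $j$ (viewed as a kernel in $K$) gives $h(ij)=(ij)\beta$, and monomorphisms in abelian categories are kernels, so $ij$ is a fully invariant kernel; part (2) is indeed just the dual statement. The paper does not reproduce a proof (it cites \cite[Lemma~2.4]{CO1}), and your diagram chase is exactly the standard argument that the cited result rests on.
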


\begin{prop} \label{p:fids} Let $\A$ be an abelian category. Let $0\to F\stackrel{i}\to M\stackrel{d}\to M/F\to 0$
be a fully invariant short exact sequence in $\A$ and $M=M_1\oplus M_2$. Then:
\begin{enumerate}
\item $F\cong (F\cap M_1)\oplus (F\cap M_2)$.
\item $M/F\cong M/(F+M_1)\oplus M/(F+M_2)$.
\end{enumerate}
\end{prop}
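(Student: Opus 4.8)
The plan is to exploit the idempotent endomorphisms attached to the decomposition $M=M_1\oplus M_2$ and transport them through the fully invariant monomorphism $i$. Write $u_k:M_k\to M$ and $p_k:M\to M_k$ for the structural morphisms of the biproduct, so that $e_k:=u_kp_k$ ($k=1,2$) are idempotent endomorphisms of $M$ with $e_1e_2=e_2e_1=0$ and $e_1+e_2=1_M$. Since $i$ is fully invariant, each $e_ki$ factors through $i$, say $e_ki=i\alpha_k$ for a (unique, as $i$ is monic) endomorphism $\alpha_k$ of $F$. A short computation using only that $i$ is monic then shows that $\alpha_1,\alpha_2$ is a complete system of orthogonal idempotents of ${\rm End}_\A(F)$: indeed $i(\alpha_1+\alpha_2)=(e_1+e_2)i=i$ gives $\alpha_1+\alpha_2=1_F$, $i\alpha_k^2=e_ke_ki=e_ki=i\alpha_k$ gives $\alpha_k^2=\alpha_k$, and $i\alpha_1\alpha_2=e_1e_2i=0$ gives $\alpha_1\alpha_2=0$ (and likewise $\alpha_2\alpha_1=0$).

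Since an abelian category is idempotent complete, this system of idempotents yields a biproduct decomposition $F\cong F_1\oplus F_2$, where $F_k={\rm Im}(\alpha_k)$, with image embedding $\iota_k:F_k\to F$ and a retraction $\pi_k:F\to F_k$ satisfying $\iota_k\pi_k=\alpha_k$ and $\pi_k\iota_k=1_{F_k}$. It then remains to identify the subobject $i\iota_k:F_k\to M$ of $M$ with the intersection $F\cap M_k$. On the one hand $i\iota_k$ factors through $i$ (trivially) and through $u_k$ (because $i\iota_k=i\alpha_k\iota_k=e_ki\iota_k=u_k(p_ki\iota_k)$), so ${\rm Im}(i\iota_k)\le F\cap M_k$. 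On the other hand, if $v:F\cap M_k\to M$ denotes the inclusion, then $v=is=u_kt$ for suitable morphisms $s,t$, whence $e_kv=u_kp_ku_kt=u_kt=v$, while also $e_kv=e_kis=i\alpha_ks=i\iota_k(\pi_ks)$; thus $v$ factors through $i\iota_k$ and $F\cap M_k\le {\rm Im}(i\iota_k)$. Hence $F\cap M_k\cong F_k$ and $F\cong (F\cap M_1)\oplus(F\cap M_2)$, proving (1).

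Part (2) will follow from (1) by the duality principle: a fully invariant kernel dualizes to a fully coinvariant cokernel (Lemma~\ref{l:eq}), and the intersection $F\cap M_k$ of the subobjects $F$ and $M_k$ of $M$ dualizes to the pushout of the quotients $M\to M/F$ and $M\to M/M_j$ ($j\ne k$), which is $M/(F+M_j)$; applying (1) in $\A^{\op}$ to the fully invariant kernel $M/F\to M$ and rewriting (using the commutativity of $\oplus$) yields $M/F\cong M/(F+M_1)\oplus M/(F+M_2)$. Alternatively, (2) may be proved directly, in perfect parallel to (1), by transporting $e_1,e_2$ through the fully coinvariant cokernel $d:M\to M/F$ to obtain orthogonal idempotents $\beta_1,\beta_2$ of ${\rm End}_\A(M/F)$ with $\beta_kd=de_k$, and identifying ${\rm Im}(\beta_k)$ with $M/(F+M_j)$.

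The main obstacle is the careful handling of intersections and images as subobjects in the categorical setting: one must verify the two subobject inequalities in (1) and, for (2), correctly track which categorical construction is dual to ``$F\cap M_k$'' so that the dualized statement really is the asserted one. Everything else is a routine transcription, using only that $i$ is monic, $d$ is epic, and that idempotents split in $\A$.
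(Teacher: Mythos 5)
Your argument is correct. It shares its key step with the paper's proof: in both, full invariance of $i$ is used to transport the biproduct idempotents $u_kp_k$ of $M=M_1\oplus M_2$ to endomorphisms $\alpha_1,\alpha_2$ of $F$ with $i\alpha_k=u_kp_ki$ and $\alpha_1+\alpha_2=1_F$. Where you diverge is in how the decomposition of $F$ is then extracted. You verify in addition that $\alpha_1,\alpha_2$ are orthogonal idempotents, split them (using idempotent completeness of the abelian category), and identify each summand ${\rm Im}(\alpha_k)$ with $F\cap M_k$ by a two-way subobject comparison; all of these verifications are sound. The paper instead skips idempotency and orthogonality entirely: it uses the pullback square defining $F\cap M_l$ to produce morphisms $\gamma_l:F\to F\cap M_l$ with $k_l\gamma_l=\alpha_l$, so that the canonical monomorphism $\left[\begin{smallmatrix} k_1&k_2 \end{smallmatrix}\right]:(F\cap M_1)\oplus(F\cap M_2)\to F$ acquires the section $\left[\begin{smallmatrix}\gamma_1\\ \gamma_2\end{smallmatrix}\right]$ and is therefore an isomorphism — a slightly shorter route that never leaves the pullback squares, whereas yours makes the idempotent structure of ${\rm End}_{\mathcal{A}}(F)$ explicit (which is harmless and arguably more transparent). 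For part (2) the paper simply invokes its standing convention of proving only the first statement and appealing to duality; your dualization is the same in spirit but more carefully tracked, correctly noting that the subobject $M_k$ of $M$ in $\mathcal{A}^{\rm op}$ corresponds to the quotient $M/M_j$ ($j\neq k$) in $\mathcal{A}$ and that the opposite-category intersection with $M/F$ is the pushout $M/(F+M_j)$, which is exactly what the asserted statement requires.
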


\begin{proof} (1) For $l=1,2$, denote by $u_l:M_l\to M$ the canonical injection,
and by $p_l:M\to M_l$ the canonical projection. Also, for $l=1,2$, denote by
$j_l:F\cap M_l\to M_l$ and $k_l:F\cap M_l\to F$ the inclusion monomorphisms.
The universal property of the coproduct yields the monomorphism
$\left[\begin{smallmatrix} k_1&k_2 \end{smallmatrix}\right]:(F\cap M_1)\oplus (F\cap M_2)\to F$.

Since $i:F\to M$ is fully invariant, there exist morphisms $\alpha_1,\alpha_2:F\to F$ such that $u_1p_1i=i\alpha_1$
and $u_2p_2i=i\alpha_2$. Then $i(\alpha_1+\alpha_2)=u_1p_1i+u_2p_2i=i$, hence $\alpha_1+\alpha_2=1_F$, because $i$ is a monomorphism.
For $l=1,2$ the following square is a pullback:
$$\SelectTips{cm}{}
\xymatrix{
F\cap M_l \ar[r]^-{j_l} \ar[d]_{k_l} & M_l \ar[d]^{u_l} \\
F \ar[r]_i & M }$$ For $l=1,2$, by the pullback property, there
exists a morphism $\gamma_l:F\to F\cap M_l$ such that
$j_l\gamma_l=i_l$ and $k_l\gamma_l=\alpha_l$. Then
$\left[\begin{smallmatrix} k_1&k_2 \end{smallmatrix}\right]
\left[\begin{smallmatrix} \gamma_1 \\ \gamma_2
\end{smallmatrix}\right]=k_1\gamma_1+k_2\gamma_2=\alpha_1+\alpha_2=1_F$.
Hence $\left[\begin{smallmatrix} k_1&k_2 \end{smallmatrix}\right]$
is a retraction, and so it is an isomorphism. This shows that
$F\cong (F\cap M_1)\oplus (F\cap M_2)$.
\end{proof}

\begin{prop}\label{p:FG2} Let $\A$ be an abelian category. Let $0\to F\stackrel{i}\to M\stackrel{d}\to M/F\to 0$ 
be a fully invariant short exact sequence and $0\to G\stackrel{j}\to M\stackrel{p}\to M/G\to 0$ 
a short exact sequence in $\A$.
\begin{enumerate}
\item Let $u:F\cap G\to G$ be the inclusion monomorphism. 
\begin{enumerate}[(i)]
\item Assume that the above second short exact sequence is also fully invariant. 
Then the inclusion monomorphism $ju:F\cap G\to M$ is fully invariant.
\item Assume that every morphism $G\to G$ can be extended to a morphism $M\to M$ in $\A$. 
Then $u:F\cap G\to G$ is fully invariant.
\end{enumerate}
\item Let $q:M/G\to M/(F+G)$ be the induced epimorphism.
\begin{enumerate}[(i)]
\item Assume that the above second short exact sequence is also fully invariant.
Then the induced epimorphism $qp:M\to M/(F+G)$ is fully coinvariant.
\item Assume that every morphism $M/G\to M/G$ can be lifted to a morphism $M\to M$ in $\A$. 
Then $q:M/G\to M/(F+G)$ is fully coinvariant.
\end{enumerate}
\end{enumerate}
\end{prop}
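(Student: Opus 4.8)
The plan is to exploit the fact that, in an abelian category, $F\cap G$ is nothing but the pullback of the monomorphisms $i:F\to M$ and $j:G\to M$; write $v:F\cap G\to F$ for the remaining leg of this pullback, so that $ju=iv$ is precisely the inclusion monomorphism of $F\cap G$ into $M$. With this description in hand, both items of part (1) reduce to a single application of the universal property of the pullback, and part (2) will follow by the duality principle, since the dual of a pullback is a pushout and $M/(F+G)$ is exactly the pushout of $d:M\to M/F$ and $p:M\to M/G$, with $q:M/G\to M/(F+G)$ (together with the analogous map $M/F\to M/(F+G)$) being its structural morphisms and $qp$ the diagonal.

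For part (1)(i), I would start with an arbitrary morphism $h:M\to M$. Full invariance of $i$ gives $\beta:F\to F$ with $hi=i\beta$, and full invariance of $j$ gives $\delta:G\to G$ with $hj=j\delta$. The key computation is then
$$i(\beta v)=(hi)v=h(iv)=h(ju)=(hj)u=j(\delta u),$$
so the pair $(\beta v,\delta u)$ is compatible with the cospan $F\xrightarrow{i}M\xleftarrow{j}G$; the pullback property produces a morphism $\psi:F\cap G\to F\cap G$ with $v\psi=\beta v$ and $u\psi=\delta u$. Hence $h(ju)=h(iv)=i(v\psi)=(iv)\psi=(ju)\psi$ factors through $ju$, proving that $ju$ is fully invariant.

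For part (1)(ii), given $g:G\to G$, the hypothesis supplies $h:M\to M$ with $hj=jg$, and full invariance of $i$ again yields $\beta:F\to F$ with $hi=i\beta$. The same type of computation, namely $i(\beta v)=h(iv)=h(ju)=(jg)u=j(gu)$, lets me invoke the pullback property to obtain $\psi:F\cap G\to F\cap G$ with $v\psi=\beta v$ and $u\psi=gu$; thus $gu=u\psi$ factors through $u$, so $u$ is fully invariant. (Once (ii) is available, (i) could alternatively be deduced by combining it with Lemma~\ref{l:trans}(1), but the direct pullback argument is uniform and does not require the lifting hypothesis of (ii).)

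Part (2) is then obtained from part (1) by the duality principle in abelian categories, replacing the pullback of $i$ and $j$ by the pushout of $d$ and $p$, "fully invariant" by "fully coinvariant", and "extends to" by "lifts to". I do not expect a serious obstacle; the only point needing a little care is to keep the two hypotheses in (i) and (ii) straight — full invariance of the \emph{second} short exact sequence versus extendability of endomorphisms of $G$ (respectively liftability of endomorphisms of $M/G$) — since these are genuinely different conditions that enter the (co)limit diagram in different ways.
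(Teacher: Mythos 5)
Your proof is correct and follows essentially the same route as the paper: identify $F\cap G$ as the pullback of $i$ and $j$, use full invariance of $i$ (and of $j$, respectively the extension hypothesis) to build a compatible pair into the cospan, and let the pullback property produce the required endomorphism of $F\cap G$; part (2) is then dispatched by the duality principle, exactly as in the paper. The only remark is that your parenthetical alternative for (1)(i) via (1)(ii) and Lemma~\ref{l:trans} would need the extension hypothesis, which you yourself note and avoid by giving the direct argument.
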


\begin{proof} (1) Denote by $k:F\cap G\to F$ the inclusion monomorphism. 

(i) Let $f:M\to M$ be a morphism in $\A$. Since $i$ and $j$ are fully invariant kernels, there exist
morphisms $\alpha:F\to F$ and $\beta:G\to G$ such that $i\alpha=fi$ and $j\beta=fj$. 
The following commutative square is a pullback:
$$\SelectTips{cm}{}
\xymatrix{
F\cap G \ar[r]^-u \ar[d]_k & G \ar[d]^j \\
F \ar[r]_i & M }$$ 
We have $i\alpha k=fik=fju=j\beta u$. By the pullback property, there exists a unique morphism 
$\gamma:F\cap G\to F\cap G$ such that $k\gamma=\alpha k$ and $u\gamma=\beta u$.
We have $fju=j\beta u=ju\gamma$. Hence $ju:F\cap G\to M$ is fully invariant.

(ii) Let $h:G\to G$ be a morphism in $\A$. By hypothesis, $h$ can be extended to a morphism $f: M\to M$.
Hence $fj=jh$. Since $i$ is fully invariant, there exists a morphism $\alpha:F\to F$ such that $i\alpha=fi$.
Consider the pullback square from the proof of (i). We have $i\alpha k=fik=fju=jhu$. By the
pullback property, there exists a unique morphism $\gamma:F\cap G\to F\cap G$ such that $k\gamma=\alpha k$ and $u\gamma=hu$.
This shows that $u$ is fully invariant.
\end{proof}

\begin{cor} \label{c:Fds} Let $\A$ be an abelian category. Let $0\to F\to M\to M/F\to 0$
be a fully invariant short exact sequence in $\A$ and $M=M_1\oplus M_2$. Then:
\begin{enumerate}
\item The inclusion monomorphism $u:F\cap M_1\to M_1$ is fully invariant.
\item The induced epimorphism $q:M_1\to M/(F+M_2)$ is fully coinvariant.
\end{enumerate}
\end{cor}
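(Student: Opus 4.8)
The plan is to obtain both statements as direct applications of Proposition~\ref{p:FG2}, taking the second short exact sequence there to be the split one coming from the summand in question; in each case the only hypothesis to check --- the extension hypothesis in part~(1), the lifting hypothesis in part~(2) --- is automatic for a direct summand. Write $u_1,u_2$ for the canonical injections and $p_1,p_2$ for the canonical projections of $M=M_1\oplus M_2$. For part~(1), I would apply Proposition~\ref{p:FG2}(1)(ii) with the second short exact sequence $0\to M_1\stackrel{u_1}\to M\stackrel{p}\to M/M_1\to 0$, where $p$ denotes the cokernel of $u_1$. The hypothesis to verify is that every morphism $h:M_1\to M_1$ extends to a morphism $M\to M$, and $f:=u_1hp_1$ is such an extension since $fu_1=u_1h(p_1u_1)=u_1h$, using $p_1u_1=1_{M_1}$. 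Proposition~\ref{p:FG2}(1)(ii) then gives that the inclusion monomorphism $u:F\cap M_1\to M_1$ is fully invariant.

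For part~(2), I would apply Proposition~\ref{p:FG2}(2)(ii) with the second short exact sequence $0\to M_2\stackrel{u_2}\to M\stackrel{p'}\to M/M_2\to 0$, where $p'$ denotes the cokernel of $u_2$. Now the hypothesis to verify is that every morphism $h:M/M_2\to M/M_2$ lifts along $p'$ to a morphism $M\to M$. Since $p'u_1:M_1\to M/M_2$ is an isomorphism --- its kernel is $M_1\cap M_2=0$, and it is an epimorphism because $M=M_1+M_2$ --- the epimorphism $p'$ is a retraction; choosing a section $s$ of $p'$, the morphism $f:=shp'$ satisfies $p'f=(p's)hp'=hp'$, so $f$ is the required lift. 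Proposition~\ref{p:FG2}(2)(ii) then shows that the induced epimorphism $M/M_2\to M/(F+M_2)$ is fully coinvariant; composing it with the isomorphism $p'u_1$, and noting that full coinvariance is preserved under precomposition with an isomorphism, we conclude that the induced epimorphism $q:M_1\to M/(F+M_2)$ is fully coinvariant.

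I do not expect any real obstacle: the corollary merely records the two special cases of Proposition~\ref{p:FG2} in which the auxiliary short exact sequence splits, and for a split sequence coming from a summand the extension and lifting hypotheses are trivially satisfied. The only points requiring a sentence of care are the passage between $M/M_2$ and $M_1$ in part~(2) and the observation that full coinvariance is stable under isomorphism; as an alternative to the latter, part~(2) can be deduced from part~(1) via Lemma~\ref{l:eq}, once one checks --- using Proposition~\ref{p:fids} and the modularity of the subobject lattice of $M$ --- that $q$ is exactly the cokernel of $u:F\cap M_1\to M_1$.
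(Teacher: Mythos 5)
Your proof of part (1) is exactly the paper's: the extension hypothesis of Proposition~\ref{p:FG2}(1)(ii) is automatic for a direct summand (via $u_1hp_1$), and the proposition gives full invariance of $u$. For part (2) the paper simply appeals to its standing duality convention and proves only (1), whereas you argue directly via Proposition~\ref{p:FG2}(2)(ii) (or, in your alternative, via Lemma~\ref{l:eq} applied to $q=\mathrm{coker}(u)$); both variants are correct, and the auxiliary facts you use --- lifting along the retraction $p'$ by $f=shp'$, stability of full coinvariance under precomposition with the isomorphism $p'u_1$, and $\mathrm{Ker}(q)=F\cap M_1$ --- all check out.
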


\begin{proof} (1) Note that every morphism $M_1\to M_1$ can be extended to a morphism $M\to M$ in $\A$, 
and use Proposition \ref{p:FG2} (1) (ii).
\end{proof}

\begin{prop} \label{p:POPB} Let $\A$ be an abelian category.
\begin{enumerate}
\item Let $g:B\to B'$ be a cokernel and $d':B'\to C$ a morphism such that
$d=d'g:B\to C$ is a fully coinvariant retraction. Then $d'$ is a fully coinvariant retraction.
\item Let $g:B'\to B$ be a kernel and $i':A\to B'$ a morphism such that
$i=gi':A\to B$ is a fully invariant section. Then $i'$ is a fully
invariant section.
\end{enumerate}
\end{prop}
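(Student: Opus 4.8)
The plan is to prove part (1) directly and to deduce part (2) from it by passing to the opposite category $\A^{\op}$, in which cokernels become kernels, retractions become sections, and fully coinvariant morphisms become fully invariant ones. For part (1) there are two things to establish: that $d'$ is a retraction, and that $d'$ is fully coinvariant — the latter making sense precisely because a retraction is in particular an epimorphism.

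First I would unpack the hypothesis: since $d=d'g$ is a retraction, there is a morphism $d''\colon C\to B$ with $dd''=1_C$. Then $d'(gd'')=(d'g)d''=dd''=1_C$, so $gd''$ is a right inverse of $d'$ and hence $d'$ is a retraction. This step is immediate and uses nothing about $g$ beyond $d=d'g$.

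The substantive point is the full coinvariance of $d'$. Given an arbitrary morphism $h'\colon B'\to B'$, I would apply the fully coinvariant hypothesis on $d$ to the endomorphism $h:=d''\,d'\,h'\,g\colon B\to B$. It produces $\bar h\colon C\to C$ with $dh=\bar h\,d$. Now $dh=(dd'')(d'h'g)=d'h'g$ because $dd''=1_C$, while $\bar h d=\bar h d'g$ because $d=d'g$; comparing the two expressions and cancelling the epimorphism $g$ on the right gives $d'h'=\bar h d'$. Thus $d'h'$ factors through $d'$, so $d'$ is fully coinvariant, and part (1) is complete.

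The only nonroutine ingredient — the ``hard part'', such as it is — is the choice of the auxiliary endomorphism $h=d''d'h'g$ of $B$, which is what converts the hypothesis on $d$ into a statement about $d'$; everything else is formal manipulation with one-sided inverses and cancellation of an epimorphism. For part (2) the dual argument runs as follows: from $i''i=1_A$ one gets that $i''g$ is a left inverse of $i'$, so $i'$ is a section; and applying the fully invariant hypothesis on $i$ to $h:=g\,h'\,i'\,i''\colon B\to B$ yields $\bar h\colon A\to A$ with $hi=i\bar h$, whence $gh'i'=gi'\bar h$, and cancelling the monomorphism $g$ on the left gives $h'i'=i'\bar h$, i.e.\ $i'$ is fully invariant.
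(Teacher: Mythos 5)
Your proof is correct, and it streamlines the argument compared with the paper's own proof. Both proofs hinge on the same auxiliary endomorphism of $B$ (your $h=d''d'h'g$, the paper's $sd'hg$ with $s=d''$), but they exploit it differently. The paper first translates the hypothesis through Lemma~\ref{l:eq}, replacing fully coinvariance of $d$ by full invariance of its kernel $i=\mbox{ker\/}(d)$, then builds a commutative diagram with exact rows whose left square is a pullback, so as to obtain an epimorphism $f$ and the kernel $i'$ of $d'$; it deduces $sd'hgi=i\alpha$, hence $d'hi'f=0$, cancels the epimorphism $f$ to get $d'hi'=0$, and finally factors $d'h$ through the cokernel $d'$. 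You instead apply the fully coinvariant hypothesis on $d$ directly to the auxiliary endomorphism, compute $dh=d'h'g$ and $\bar h d=\bar h d'g$, and cancel the epimorphism $g$ on the right, which needs nothing about $g$ beyond its being epi (part of being a cokernel) and avoids both the pullback construction and the kernel--cokernel translation. The trade-off is that the paper's longer route also produces the induced exact second row and the morphism $f$, a construction it reuses elsewhere (e.g.\ in the proof of Theorem~\ref{t:epimono}), whereas your argument is self-contained and shorter. Your treatment of part (2) by passing to $\A^{\op}$ (or by the explicit dual computation you sketch, which is likewise correct) matches the paper's standing convention of deducing the dual statement by the duality principle.
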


\begin{proof} (1) Note that $d'$ is a retraction. One may construct the following commutative diagram:
$$\SelectTips{cm}{}
\xymatrix{
A \ar[r]^-{i} \ar[d]_f & B \ar[r]^-{d} \ar[d]^g & C \ar@{=}[d] \\
A' \ar[r]_-{i'} & B' \ar[r]_{d'} & C }$$
where the rows are exact and the left square is a
pullback. Then $f$ is a cokernel and $i$ is a fully invariant
section. Now let $h:B'\to B'$ be a morphism. There exists a
monomorphism $s:C\to B$ such that $ds=1_C$. Consider the morphism
$sd'hg:B\to B$. Since $i$ is a fully invariant section, there
exists a morphism $\alpha:A\to A$ such that $sd'hgi=i\alpha$. Then
$d'hi'f=d'hgi=dsd'hgi=di\alpha=0$, whence we have $d'hi'=0$,
because $f$ is an epimorphism. Then there exists a morphism
$\gamma:C\to C$ such that $d'h=\gamma d'$. Hence $d'$ is a fully
coinvariant retraction.
\end{proof}

\begin{prop} \label{p:fimatrix} Let $\A$ be an abelian category.
\begin{enumerate}
\item Let $\left[\begin{smallmatrix} i&0\\ 0&i' \end{smallmatrix}\right]:A\oplus A'\to B\oplus B'$
be a fully invariant kernel in $\A$. Then $i:A\to B$ and $i':A'\to B'$ are fully invariant kernels.
\item Let $\left[\begin{smallmatrix} d&0\\ 0&d' \end{smallmatrix}\right]:B\oplus B'\to C\oplus C'$
be a fully coinvariant cokernel in $\A$. Then $d:B\to C$ and $d':B'\to C'$ are fully coinvariant cokernels.
\end{enumerate}
\end{prop}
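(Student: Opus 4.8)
The plan is to prove part (1) directly, the key point being to exhibit, for an arbitrary endomorphism $h:A\to A$, a factorization $ih = i\alpha$ with $\alpha:A\to A$, by embedding everything into the decomposed situation and applying the hypothesis that $\left[\begin{smallmatrix} i&0\\ 0&i' \end{smallmatrix}\right]$ is fully invariant. Let me write $u_1:B\to B\oplus B'$ and $p_1:B\oplus B'\to B$ for the canonical injection and projection on the first factor, and similarly $v_1:A\to A\oplus A'$, $r_1:A\oplus A'\to A$. First I would observe that $\left[\begin{smallmatrix} i&0\\ 0&i' \end{smallmatrix}\right]v_1 = u_1 i$, and dually $p_1\left[\begin{smallmatrix} i&0\\ 0&i' \end{smallmatrix}\right] = i r_1$; these are the compatibility identities between the block monomorphism and the coordinate maps that make the bookkeeping work.

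Next, given $h:A\to A$, I would feed the endomorphism $u_1 h p_1 : B\oplus B' \to B\oplus B'$ into the full-invariance hypothesis for $\left[\begin{smallmatrix} i&0\\ 0&i' \end{smallmatrix}\right]$: there is a morphism $\beta:A\oplus A'\to A\oplus A'$ with $(u_1 h p_1)\left[\begin{smallmatrix} i&0\\ 0&i' \end{smallmatrix}\right] = \left[\begin{smallmatrix} i&0\\ 0&i' \end{smallmatrix}\right]\beta$. Set $\alpha := r_1\beta v_1 : A\to A$. Then I would compute $i\alpha = i r_1 \beta v_1 = p_1\left[\begin{smallmatrix} i&0\\ 0&i' \end{smallmatrix}\right]\beta v_1 = p_1 u_1 h p_1 \left[\begin{smallmatrix} i&0\\ 0&i' \end{smallmatrix}\right] v_1 = h p_1 u_1 i = hi$, using $p_1 u_1 = 1_B$ and the two compatibility identities above. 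This shows $ih$ factors through $i$ via $\alpha$, so $i:A\to B$ is fully invariant; the argument for $i':A'\to B'$ is symmetric (use the second coordinate). Finally, $i$ is a kernel because it is the pullback of $\left[\begin{smallmatrix} i&0\\ 0&i' \end{smallmatrix}\right]$ along $u_1$, or directly because $\left[\begin{smallmatrix} i&0\\ 0&i' \end{smallmatrix}\right]$ being a monomorphism forces each block $i$, $i'$ to be a monomorphism, and in an abelian category a monomorphism is automatically a kernel (of its cokernel). Part (2) then follows by the duality principle in abelian categories, as promised in the introduction.

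The main obstacle is essentially only notational: one must be careful that $u_1 h p_1$ is a legitimate endomorphism of $B\oplus B'$ (it is, as a composite of morphisms in $\A$) and that the chain of equalities collapses cleanly using $p_1 u_1 = 1_B$ while the ``cross terms'' involving $p_1 u_2$, etc., never arise because we only ever compose along the first coordinate. No limit/colimit subtleties are needed beyond the trivial fact that coordinate injections and projections of a biproduct behave as in module categories, which holds in any additive category. Hence the proof is short and the only real content is choosing the right test endomorphism $u_1 h p_1$ and the right conjugate $\alpha = r_1\beta v_1$.
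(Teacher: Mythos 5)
Your argument is correct and essentially the same as the paper's: the paper tests full invariance of $\left[\begin{smallmatrix} i&0\\0&i' \end{smallmatrix}\right]$ against the endomorphism $\left[\begin{smallmatrix} h&0\\0&1 \end{smallmatrix}\right]$ of $B\oplus B'$ and reads $hi=ia$ off the $(1,1)$ entry, while you test against $u_1hp_1=\left[\begin{smallmatrix} h&0\\0&0 \end{smallmatrix}\right]$ and extract $\alpha=r_1\beta v_1$, which is the same computation in coordinate-free form. The only blemish is a typing slip in your prose: the endomorphism $h$ must be an endomorphism of $B$ (not of $A$), and the goal is $hi=i\alpha$ rather than $ih=i\alpha$ --- which is exactly what your displayed chain of equalities in fact proves.
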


\begin{proof} (1) Clearly, $i$ is a kernel. Let $h:B\to B$ be a morphism.
For the morphism $\left[\begin{smallmatrix} h&0\\0&1 \end{smallmatrix}\right]:B\oplus B'\to B\oplus B'$
there exists a morphism $\left[\begin{smallmatrix} a&b\\c&d \end{smallmatrix}\right]:A\oplus A'\to A\oplus A'$
such that \[\left[\begin{smallmatrix} h&0\\0&1 \end{smallmatrix}\right]\left[\begin{smallmatrix} i&0\\0&i' \end{smallmatrix}\right]=
\left[\begin{smallmatrix} i&0\\0&i' \end{smallmatrix}\right]\left[\begin{smallmatrix} a&b\\c&d \end{smallmatrix}\right].\]
Then $hi=ia$, which shows that $i$ is fully invariant.
\end{proof}

\begin{lem} \label{l:dsum} Let $\A$ be an abelian category with coproducts. 
Let $(M_k)_{k\in K}$ be a family of objects of $\A$ such that ${\rm Hom}_{\A}(M_k, M_l)=0$ 
for every $k,l\in K$ with $k\neq l$. 
Then the short exact sequences $0\to F_k\to M_k\to C_k\to 0$ 
are fully invariant for every $k\in K$ if and only if the induced short exact sequence
$0\to \bigoplus_{k\in K}F_k\to \bigoplus_{k\in K}M_k\to \bigoplus_{k\in K}C_k\to 0$ is fully invariant.
\end{lem}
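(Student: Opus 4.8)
The plan is to exploit the fact that the vanishing hypothesis $\Hom_\A(M_k,M_l)=0$ for $k\neq l$ forces every endomorphism of a coproduct $\bigoplus_{k\in K}M_k$ to be ``diagonal'', so that full invariance of the big sequence reduces componentwise to full invariance of the small ones. For the forward implication, suppose each $0\to F_k\to M_k\to C_k\to 0$ is fully invariant, with structure monomorphisms $i_k:F_k\to M_k$, and write $i=\bigoplus i_k:\bigoplus F_k\to \bigoplus M_k$, with canonical injections $u_k:M_k\to\bigoplus M_l$ and $v_k:F_k\to\bigoplus F_l$, and projections $p_k$, $q_k$. Given an arbitrary endomorphism $f:\bigoplus M_l\to\bigoplus M_l$, I would show its ``components'' $p_l f u_k:M_k\to M_l$ vanish for $k\neq l$ (this is immediate from the hypothesis), so $f$ acts on the $k$-th summand as $u_k (p_k f u_k)$. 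Since $i_k$ is fully invariant, there is $\alpha_k:F_k\to F_k$ with $(p_k f u_k)i_k=i_k\alpha_k$; assembling $\alpha=\bigoplus\alpha_k$ and using the universal property of the coproduct, one checks $fi=i\alpha$, so $i$ is fully invariant. The only mild subtlety is justifying the ``diagonal'' form of $f$ rigorously in terms of the universal property (rather than matrix notation), but this is routine.

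For the converse, assume $i=\bigoplus i_k$ is fully invariant and fix $m\in K$; I want $i_m:F_m\to M_m$ fully invariant. Given $h:M_m\to M_m$, extend it to the endomorphism $f:=u_m h p_m$ of $\bigoplus M_l$ (which makes sense and restricts to $h$ on the $m$-th summand precisely because the cross-$\Hom$'s vanish — equivalently, one may simply take $f = u_m h p_m$ regardless). By full invariance of $i$, there is $\alpha:\bigoplus F_l\to\bigoplus F_l$ with $fi=i\alpha$. Composing with the injection $v_m$ and projecting with $q_m$, and using $i v_m = u_m i_m$, $p_m u_m=1$, one extracts a morphism $\alpha_m:=q_m\alpha v_m:F_m\to F_m$ satisfying $h i_m = i_m\alpha_m$, so $i_m$ is fully invariant. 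The dual statement for the cokernels then follows either by the same argument applied to $d=\bigoplus d_k$ together with Lemma~\ref{l:eq}, or directly by the duality principle, so I will just invoke Lemma~\ref{l:eq}.

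The main obstacle I anticipate is purely bookkeeping: making the ``componentwise'' argument precise in a general abelian category with coproducts, where one does not literally have matrices of morphisms but must instead argue through the universal properties of $\bigoplus M_k$ and $\bigoplus F_k$ and compatibility of $i$ with the injections. In particular one must check carefully that $fi=i\alpha$ can be verified by precomposing with each injection $v_k$ of $\bigoplus F_k$ (since these are jointly epimorphic in the sense of the coproduct's universal property), reducing the identity to the already-established componentwise equalities $(p_k f u_k)i_k = i_k\alpha_k$ together with the vanishing $p_l f u_k = 0$ for $l\neq k$. Once that reduction is set up, the verification is a short diagram chase.
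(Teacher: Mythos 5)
Your converse half is fine as written, and it does not even need the vanishing hypothesis: for $h:M_m\to M_m$ you take $f=u_mhp_m$, full invariance of $i$ gives $fi=i\alpha$, and since morphisms out of $\bigoplus_k F_k$ are determined by precomposition with the injections you get $p_mi=i_mq_m$, hence $hi_m=i_m(q_m\alpha v_m)$. The paper gives no argument to compare with (its proof reads ``Straightforward''), and your forward half is certainly the intended one, but it contains a genuine gap exactly at the step you call routine bookkeeping: from $p_lfu_k=0$ for all $l\neq k$ you conclude $fu_k=u_k(p_kfu_k)$, i.e.\ you identify two morphisms $F_k\to\bigoplus_lM_l$ because all their composites with the projections $p_l$ agree. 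Morphisms \emph{out of} a coproduct are determined by the injections, but morphisms \emph{into} an infinite coproduct are determined by the projections only when the family $(p_l)$ is jointly monomorphic (equivalently, when the canonical morphism $\bigoplus_lM_l\to\prod_lM_l$ is monic). What your argument actually needs is $\Hom_{\A}(M_k,\bigoplus_{l\neq k}M_l)=0$, and this does not follow from $\Hom_{\A}(M_k,M_l)=0$ for each single $l$ in a bare abelian category with coproducts, which is the stated hypothesis.

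The gap cannot be closed in that generality, because the forward implication itself fails there. Take $\A={\rm Ab}^{\rm op}$, $M_0=\mathbb{Q}$ and $M_l=\mathbb{Z}/p_l\mathbb{Z}$ for distinct primes $p_l$, $l\geq 1$: all cross-Homs vanish in ${\rm Ab}$, hence in $\A$, yet $\Hom_{\A}(M_0,\bigoplus_{l\geq1}M_l)=\Hom_{\rm Ab}(\prod_{l\geq1}\mathbb{Z}/p_l\mathbb{Z},\mathbb{Q})\neq 0$, since the product has elements of infinite order and $\mathbb{Q}$ is injective. Now choose the trivially fully invariant sequences with $F_0=M_0$ and $F_l=0$ for $l\geq1$. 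The induced sequence is exact (it is a product of exact sequences in ${\rm Ab}$), and in ${\rm Ab}$ the morphism $\bigoplus_kF_k\to\bigoplus_kM_k$ becomes the projection $q:\prod_kM_k\to\mathbb{Q}$; the endomorphism $(x_0,(x_l))\mapsto(\phi((x_l)),0)$ of $\prod_kM_k$, with $\phi:\prod_{l\geq1}\mathbb{Z}/p_l\mathbb{Z}\to\mathbb{Q}$ nonzero, satisfies $qh'=\alpha' q$ for no $\alpha'$, so the coproduct sequence is not fully invariant although every component sequence is. So your forward direction is provable only after adding a hypothesis that makes the projections jointly monic --- e.g.\ $\A$ Grothendieck (which covers every application of the lemma in the paper), or directly $\Hom_{\A}(M_k,\bigoplus_{l\neq k}M_l)=0$; with such an assumption made explicit, your argument closes and is correct, but as written the key ``diagonality'' claim is unjustified and, in the lemma's stated generality, false.
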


\begin{proof} Straightforward.
\end{proof}

Let $\A$ be an abelian category. Recall that a \emph{preradical} $r$ of $\A$ is a subfunctor of the identity functor on $\A$, 
that is, $r:\A\to \A$ is a functor which assigns to each object $A$ of $\A$ a subobject $r(A)$ such that 
every morphism $A\to B$ induces a morphism $r(A)\to r(B)$ by restriction (e.g., see \cite[I.1]{BKN}). 

The following proposition relates fully invariant short exact sequences and preradicals, 
and will be implicitly used, without further reference. It is an immediate generalization of 
\cite[Proposition~I.6.2]{BKN} from module categories to abelian categories.

\begin{prop} \label{p:fiprerad} 
Let $\A$ be an abelian category. Then a short exact sequence $0\to F\to M\to C\to 0$ in $\A$
is fully invariant if and only if there is a preradical $r$ of $\A$ such that $r(M)=F$. 
\end{prop}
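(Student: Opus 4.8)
The plan is to prove the two implications separately, in each case constructing the object promised by the other side.

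For the "if" direction, suppose $r$ is a preradical of $\A$ with $r(M)=F$, and let $i:F\to M$ be the associated subobject inclusion. I must show $i$ is a fully invariant kernel. Take any morphism $h:M\to M$. By the definition of a preradical, $h$ restricts to a morphism $r(M)\to r(M)$, i.e., there is a morphism $\alpha:F\to F$ with $ih'=hi$ where $h'=\alpha$; this is precisely the statement that $hi$ factors through $i$, so $i$ is fully invariant, and hence the short exact sequence $0\to F\to M\to C\to 0$ is fully invariant. (One should be slightly careful here: "subfunctor of the identity" must be read so that the comparison morphism $r(A)\to A$ is the monomorphism defining the subobject, and the naturality square for $h:M\to M$ gives exactly $hi=i\alpha$; this is routine.)

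For the "only if" direction, suppose $0\to F\stackrel{i}\to M\stackrel{d}\to C\to 0$ is fully invariant; I must build a preradical $r$ of $\A$ with $r(M)=F$. The natural candidate, mimicking \cite[Proposition~I.6.2]{BKN}, is to take the smallest radical (or simply the smallest preradical) for which $F$ is the value at $M$; concretely, one defines, for each object $A$ of $\A$,
\[
r(A)=\sum_{g:M\to A}\operatorname{Im}(gi),
\]
the sum ranging over all morphisms $g:M\to A$, where the sum of subobjects is formed inside $A$ (this exists because $\A$ is abelian; in the general abelian — as opposed to Grothendieck — setting one takes the image of the induced morphism from the relevant coproduct, or works with the subobject generated). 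One then checks: (a) $r$ is a subfunctor of the identity, i.e. any morphism $A\to B$ sends $r(A)$ into $r(B)$ — this is immediate from the definition since precomposition with $g:M\to A$ produces a morphism $M\to B$; and (b) $r(M)=F$. For (b), the inclusion $F\subseteq r(M)$ is witnessed by $g=1_M$, and the reverse inclusion $r(M)\subseteq F$ is exactly where full invariance of $i$ enters: for every $g:M\to M$, full invariance gives $gi=i\alpha$ for some $\alpha:F\to F$, so $\operatorname{Im}(gi)\subseteq \operatorname{Im}(i)=F$, whence the sum lies in $F$.

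The main obstacle is purely the set-theoretic/size issue of forming the sum $\sum_{g:M\to A}\operatorname{Im}(gi)$ over the possibly proper class of morphisms $g$, together with ensuring the construction is functorial in the categorical (rather than module-theoretic) language; in a well-powered abelian category the image subobjects of $A$ form a set, so the sum is legitimate, and one simply remarks that the argument of \cite[Proposition~I.6.2]{BKN} goes through verbatim. Everything else is a direct transcription of the module-theoretic proof, so I would state the result as an immediate generalization of \cite[Proposition~I.6.2]{BKN} and give only the short verification above.
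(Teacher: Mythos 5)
Your proposal follows what is, in substance, the paper's own route: the paper gives no written proof, only the remark that the statement is an immediate generalization of \cite[Proposition~I.6.2]{BKN}, and the proof of that result is precisely your trace construction $r(A)=\sum_{g:M\to A}{\rm Im}(gi)$, with the ``if'' direction being nothing more than the naturality square of the subfunctor $r$ at an endomorphism $h:M\to M$. So both directions are argued exactly as intended, and the verification that full invariance gives ${\rm Im}(gi)\subseteq F$ for every $g:M\to M$, while $g=1_M$ gives the reverse inclusion, is correct.

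The one point to fix is your treatment of the size/existence issue in the ``only if'' direction. Well-poweredness only ensures that the subobjects of $A$ form a set; it does not ensure that the $\Hom_{\A}(M,A)$-indexed family $\{{\rm Im}(gi)\}_g$ has a join in the subobject lattice of $A$, and the alternative you mention, taking the image of the induced morphism from a coproduct of copies of $F$, is not available either, since a bare abelian category is only guaranteed to have finite coproducts; the family of images is not directed, so one cannot reduce to finite sums. Thus, as written, the construction of $r$ requires an AB3-type hypothesis (set-indexed coproducts, equivalently arbitrary sums of subobjects) or complete subobject lattices. This is harmless for every use the paper makes of the proposition (Grothendieck, module and comodule categories), and the same gap is implicit in the paper's ``immediate generalization'' remark, but the sentence claiming that well-poweredness alone legitimizes the sum should be replaced by the correct hypothesis.
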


\section{$F$-split objects}

In this section we introduce our main concepts of (strongly) relative $F$-split 
and dual (strongly) relative $F$-split objects in abelian categories. 

\begin{defi} \label{d:fsplit} \rm Let $\mathcal{A}$ be an abelian category. Let $M$ be an object of $\A$, 
and let $0\to F\stackrel{i}\to N\stackrel{d}\to C\to 0$ be a fully invariant short exact sequence in $\mathcal{A}$.
Then $N$ is called:
\begin{enumerate}
\item \emph{(strongly) $M$-$F$-split} if for every morphism $g:M\to N$ the morphism $j:P\to M$
from the following pullback square is a (fully invariant) section:
$$\SelectTips{cm}{}
\xymatrix{
P \ar[r]^j \ar[d]_f & M \ar[d]^g \\
F \ar[r]_i & N }$$ 
\item \emph{dual (strongly) $M$-$F$-split} if for every morphism $g:N\to M$ the morphism $p:M\to Q$
from the following pushout square is a (fully coinvariant) retraction:
$$\SelectTips{cm}{}
\xymatrix{
N \ar[r]^d \ar[d]_g & C \ar[d]^h \\
M \ar[r]_p & Q }$$ 
\item \emph{(strongly) self-$F$-split} if $N$ is (strongly) $N$-$F$-split.
\item \emph{dual (strongly) self-$F$-split} if $N$ is dual (strongly) $N$-$F$-split.
\end{enumerate}
\end{defi}

\begin{lem} \label{l:kc} Let $\mathcal{A}$ be an abelian category. Let $M$ be an object of $\A$, 
and let $0\to F\stackrel{i}\to N\stackrel{d}\to C\to 0$ be a fully invariant short exact sequence in $\mathcal{A}$. Then:
\begin{enumerate}
\item $N$ is (strongly) $M$-$F$-split if and only if for every morphism $g:M\to N$, ${\rm ker}(dg)$ is a (fully invariant) section.
\item $N$ is dual (strongly) $M$-$F$-split if and only if for every morphism $g:N\to M$, ${\rm coker}(gi)$ 
is a (fully coinvariant) retraction.
\end{enumerate} 
\end{lem}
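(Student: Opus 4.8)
The plan is to identify the pullback object $P$ in Definition \ref{d:fsplit}(1) with $\Ker(dg)$ and show that the structural morphism $j:P\to M$ coincides, up to isomorphism, with $\ker(dg)$; then the two formulations of $M$-$F$-splitness are literally the same statement. First I would observe that since $0\to F\stackrel{i}\to N\stackrel{d}\to C\to 0$ is exact, $i=\ker(d)$. Given $g:M\to N$, form the pullback of $i$ along $g$; by the standard fact that pulling back a kernel yields a kernel (indeed the pullback of $\ker(d)$ along $g$ is $\ker(dg)$ in any abelian category), the morphism $j:P\to M$ in the pullback square is a kernel of $dg$. Hence there is an isomorphism $\theta:P\to \Ker(dg)$ with $\ker(dg)\circ\theta=j$. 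Since being a section is preserved and reflected by composition with an isomorphism, $j$ is a section if and only if $\ker(dg)$ is a section; likewise, using that $\theta$ is an isomorphism (hence $\ker(dg)$ is a fully invariant kernel iff $j$ is, because the defining factorization condition transfers along $\theta$), $j$ is a fully invariant section iff $\ker(dg)$ is a fully invariant section. This proves (1) in both the plain and the strong version simultaneously.

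For part (2), I would dualize: in an abelian category $d=\coker(i)$ for the given exact sequence, and pushing out a cokernel along $g:N\to M$ yields a cokernel — specifically the pushout of $\coker(i)$ along $g$ is $\coker(gi)$. So the morphism $p:M\to Q$ in the pushout square of Definition \ref{d:fsplit}(2) is a cokernel of $gi$, giving an isomorphism $\eta:\Coker(gi)\to Q$ with $\eta\circ\coker(gi)=p$. Then $p$ is a retraction iff $\coker(gi)$ is, and $p$ is a fully coinvariant retraction iff $\coker(gi)$ is, by the same transfer-along-an-isomorphism argument (now for the "$dh$ factors through $d$" condition defining fully coinvariant cokernels). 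Alternatively, and more in the spirit of the paper's stated convention, one simply invokes the duality principle: part (2) is the statement of part (1) in the opposite category $\A^{\op}$, where pullbacks become pushouts, kernels become cokernels, sections become retractions, and fully invariant kernels become fully coinvariant cokernels (using Lemma \ref{l:eq}).

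The only genuine point requiring care — and the step I expect to be the main obstacle — is the claim that the structural morphism of the pullback of $\ker(d)$ along $g$ is precisely $\ker(dg)$, together with the verification that the "fully invariant" attribute transfers correctly across the canonical isomorphism $\theta$. The first is a routine diagram chase: if $j:P\to M$ is the pullback of $i=\ker(d)$ along $g$ with second leg $f:P\to F$, then $dgj=dif=0$, and given any $t:T\to M$ with $dgt=0$, the morphism $gt:T\to N$ factors through $\ker(d)=i$ via some $s:T\to F$, whence the pullback property yields a unique $T\to P$ over $t$; uniqueness of this factorization against $dg$ identifies $j$ with $\ker(dg)$. For the "fully invariant" transfer: $\ker(dg)$ is fully invariant means every $h:M\to M$ satisfies $h\circ\ker(dg)=\ker(dg)\circ\alpha$ for some $\alpha:\Ker(dg)\to\Ker(dg)$; composing with the isomorphism $\theta$ on both sides shows this is equivalent to the corresponding statement for $j$, with $\alpha$ replaced by $\theta^{-1}\alpha\theta$ (or rather $\theta^{-1}\alpha'\theta$ for the matching $\alpha'$). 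Everything else is a direct unwinding of Definition \ref{d:fsplit}.
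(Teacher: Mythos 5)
Your proposal is correct and follows essentially the same route as the paper: the paper also identifies the pullback of $i=\ker(d)$ along $g$ with $\ker(dg)$ (citing Mitchell's Proposition~13.2 rather than doing the diagram chase by hand) and then concludes, with part (2) following by duality. Your extra care about transporting the ``(fully invariant) section'' condition across the canonical isomorphism is exactly the routine verification the paper leaves implicit.
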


\begin{proof} (1) Consider the following diagram:
$$\SelectTips{cm}{}
\xymatrix{
P \ar[r]^j \ar@{-->}[d]_f & M \ar[d]^g \ar[r]^{dg} & C \ar@{=}[d] \\
F \ar[r]_i & N \ar[r]_d & C
}$$
Since $i={\rm ker}(d)$, the diagram may be completed to a pullback square $PMFN$ if and only if 
$j={\rm ker}(dg)$ \cite[Proposition~13.2]{M}. Now the conclusion is clear. 
\end{proof}

\begin{remark} \rm Consider the module category ${\rm Mod}(R)$, and let $M$, $N$ be right $R$-modules. 
We refer to notation and diagrams from Definition \ref{d:fsplit}.

(1) Let $F$ be a fully invariant submodule of $N$. Then $N$ is
(strongly) $M$-$F$-split if and only if for every homomorphism
$g:M\to N$, $j:P\to M$ is a (fully invariant) section if and only if for
every homomorphism $g:M\to N$, $P=g^{-1}(F)$ is a  (fully
invariant) direct summand of $M$.

(2) Let $F$ be a fully invariant submodule of $M$, or equivalently, let $C=M/F$ be a fully coinvariant factor module of $M$.
Then $N$ is dual (strongly) $M$-$F$-split if and only if for every homomorphism $g:N\to M$,
$p:M\to Q$ is a (fully coinvariant) retraction if and only if
for every homomorphism $g:N\to M$, ${\rm ker}(p)$ is a (fully invariant) section if and only if 
for every homomorphism $g:N\to M$, $gi$ is a (fully invariant) section if and only if 
for every homomorphism $g:N\to M$, $g(F)$ is a (fully invariant) direct summand of $M$.
\end{remark}

\begin{exam} \rm (1) Let $M$ and $N$ be objects of an abelian category $\mathcal{A}$. 
Obviously, $N$ is strongly $M$-$N$-split and $N$ is dual strongly $M$-$0$-split.

(2) Consider the module category ${\rm Mod}(R)$. Let $M$ and $N$ be right $R$-modules. 
Then $N$ is $M$-$F$-split if and only if $N$ is $M$-$F$-inverse split in the sense of \cite{Ungor}. 
For $F=Z^2_M(N)$ (see the notation from the last section of our paper), a module $N$ is $M$-$F$-split 
if and only if $N$ is $M$-$T$-Rickart in the sense of \cite{Atani12}.

Let us note that our categorical dual notion of dual (relative) $F$-splitness does not 
coincide with dual (relative) $F$-inverse splitness in the sense of \cite{Ungor2}
(e.g., apart from their definitions, compare our forthcoming Theorem \ref{t:key} and \cite[Theorem~2.2]{Ungor2}). 
But for $F={\overline Z}^2_M(N)$ (see the notation from the last section of our paper), 
a module $N$ is dual $M$-$F$-split if and only if $N$ is $M$-$T$-dual Rickart in the sense of \cite{Atani16}.
\end{exam}

Strong self-$F$-splitness and self-$F$-splitness are related by the following result. 

\begin{theor} \label{t:rel} Let $\A$ be an abelian category.
Let $0\to F\stackrel{i}\to M\stackrel{d}\to C\to 0$ be a fully invariant short exact sequence in $\A$.
\begin{enumerate}
\item The following are equivalent:
\begin{enumerate}[(i)]
\item $M$ is strongly self-$F$-split.
\item $M$ is self-$F$-split and every direct summand of $M$ which contains $F$ is fully invariant.
\end{enumerate}
\item The following are equivalent:
\begin{enumerate}[(i)]
\item $M$ is dual strongly self-$F$-split.
\item $M$ is dual self-$F$-split and every direct summand of $M$ which is contained in $F$ is fully invariant.
\end{enumerate}
\end{enumerate}
\end{theor}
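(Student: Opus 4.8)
The plan is to prove part (1); part (2) then follows by the duality principle in abelian categories. Throughout I will use Lemma \ref{l:kc}(1), which tells us that $M$ is (strongly) self-$F$-split if and only if for every morphism $g:M\to M$, the kernel $\ker(dg)$ is a (fully invariant) section; the object $P$ of the defining pullback square may be identified with $\Ker(dg)$, and $j$ with $\ker(dg)$.

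For the implication (i) $\Rightarrow$ (ii), assume $M$ is strongly self-$F$-split. Taking $g=1_M$ in the definition shows that $\ker(d1_M)=\ker(d)=i$ is a fully invariant section, so $F$ is a fully invariant direct summand, and in particular every $\ker(dg)$ being a section already gives that $M$ is self-$F$-split. Now let $M=M_1\oplus M_2$ with $F\subseteq M_1$; I must show $u_1:M_1\to M$ is fully invariant. The key observation is that if $p_1:M\to M_1$ is the projection and $u_1$ the injection, then $g:=u_1p_1$ is idempotent and $dg=d$ (since $F\subseteq M_1$ forces $d$ to kill $M_2$, i.e. $du_2p_2=0$, hence $du_1p_1=d$). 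Thus $\ker(dg)=\ker(d)=i$ with domain $F$, which is a fully invariant section by hypothesis — but this only re-establishes full invariance of $F$, not of $M_1$. The actual argument I expect is to instead test strong self-$F$-splitness on morphisms of the form $g$ whose image lands in $M_1$: for an arbitrary $f:M\to M$, one wants to produce, from the hypothesis that various $\ker(dg)$ are fully invariant, a factorization $fu_1=u_1\alpha$. Concretely, I would argue that $M_1/F\cong C$ under $d$, so $M_1$ is (via the splitting $M\cong F\oplus(M_1/F)\oplus(M_2)$-type decomposition coming from Theorem mentioned in the introduction, $M\cong F\oplus C'$ with $C'\cong M_1/F$ a complement of $F$ inside $M_1$) describable as $F\oplus C'$; then $M_1$ being fully invariant amounts to showing $C'$ is a fully invariant complement, which in the self-Rickart language (using $M\cong F\oplus C$, $C$ self-Rickart, per the result quoted in the introduction) corresponds to idempotents in $\End(C)$ being central. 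I would set $g$ to be the composite $M\xrightarrow{f} M\xrightarrow{\text{proj}} C'$ included back, use that $\ker(dg)$ is a fully invariant section, and chase the resulting square to extract the needed $\alpha:M_1\to M_1$.

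For the converse (ii) $\Rightarrow$ (i), assume $M$ is self-$F$-split and every direct summand containing $F$ is fully invariant. Given any $g:M\to M$, self-$F$-splitness gives that $j=\ker(dg):P\to M$ is a section, so $P$ is (identified with) a direct summand of $M$. Since $dg$ restricted to $P$ is zero, $dj=0$, and as $i=\ker(d)$ there is a (mono) factorization $P\hookrightarrow F\hookrightarrow M$; in particular $F\subseteq P$ as subobjects of $M$... — wait, rather $j$ factors through $i$, so $\Im(j)\subseteq\Im(i)=F$, i.e. $P\subseteq F$. Hmm, that is the wrong containment. Let me instead note: $\ker(dg)$ being a section means $M=P\oplus P'$ for some $P'$, and $dg$ induces a monomorphism on $P'$; what we need is that $P$ contains $F$. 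This holds because $g(F)\subseteq F$ (as $i$ is fully invariant) so $dg\,i=0$, whence $i$ factors through $j=\ker(dg)$, giving $F\subseteq P$. Therefore $P$ is a direct summand of $M$ containing $F$, so by hypothesis it is fully invariant, i.e. $j$ is a fully invariant section. As $g$ was arbitrary, $M$ is strongly self-$F$-split.

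The main obstacle I anticipate is the forward direction (i) $\Rightarrow$ (ii): extracting full invariance of an arbitrary direct summand $M_1\supseteq F$ from the full-invariance-of-kernels hypothesis requires choosing the right test morphism $g$ so that $\Ker(dg)$ is precisely (a copy of) $M_1$, and then carefully translating "$\ker(dg)$ is a fully invariant section" back into "$u_1:M_1\to M$ is fully invariant" through the pullback diagram; the bookkeeping with the identifications $\Ker(dg)\cong M_1$ and the comparison of the two factorization conditions is where care is needed. The converse, by contrast, should be a short diagram chase once the containment $F\subseteq\Ker(dg)$ is established from full invariance of $i$.
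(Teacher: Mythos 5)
Your converse direction (ii) $\Rightarrow$ (i) is correct and is essentially the paper's own argument: full invariance of $i$ gives a factorization $gi=i\alpha$, hence $dgi=0$ and $F\subseteq P={\rm Ker}(dg)$, so the summand $P$ is fully invariant by hypothesis.

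The gap is in (i) $\Rightarrow$ (ii), which you never actually prove. Your computation with $g=u_1p_1$ is wrong: $F\subseteq M_1$ does not force $du_2p_2=0$; on the contrary, ${\rm Ker}(d)=F\subseteq M_1$ means $d$ is monic on $M_2$, and in fact ${\rm Ker}(du_1p_1)=F\oplus M_2$, not $F$. The missing idea is simply to test strong self-$F$-splitness on the \emph{complementary} idempotent $g=u_2p_2$ (i.e.\ $\left[\begin{smallmatrix} 0&0\\ 0&1 \end{smallmatrix}\right]$ on $M_1\oplus M_2$): if $dg(m)=0$ then $p_2(m)\in F\cap M_2=0$, so the pullback of $i$ along this $g$ is exactly $M_1$ with the canonical inclusion as the morphism $j$ --- this is the pullback square the paper verifies directly --- and strong self-$F$-splitness then says at once that this inclusion is a fully invariant section, i.e.\ $M_1$ is fully invariant. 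Your fallback sketch (take $g=\iota\pi f$ for arbitrary $f$, where $\pi$ projects onto a complement $C'$ of $F$ inside $M_1$) is not carried out and does not obviously work: there ${\rm Ker}(dg)=f^{-1}(F\oplus M_2)$, and knowing that this summand is fully invariant gives no direct route to the needed factorization $fu_1=u_1\alpha$; moreover, rerouting through $M\cong F\oplus C$ with $C$ strongly self-Rickart and centrality of idempotents in ${\rm End}_{\A}(C)$ would rest on Theorem \ref{t:key} and the Section 4 results, which come after this theorem and are not needed for it.
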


\begin{proof} (1) (i) $\Rightarrow$ (ii) Assume that $M$ is strongly self-$F$-split.
Clearly, $M$ is self-$F$-split. Now let $X$ be a direct summand of $M$ such that $F\subseteq X$.
Write $M=X\oplus Y$ for some subobject $Y$ of $M$, and denote by $u:F\to X$ the inclusion monomorphism.
Then $i=\left[\begin{smallmatrix} u\\ 0 \end{smallmatrix}\right]:F\to X\oplus Y$.
We claim that the following commutative square is a pullback:
$$\SelectTips{cm}{}
\xymatrix{
X \ar[r]^-{\left[\begin{smallmatrix} 1\\0 \end{smallmatrix}\right]} \ar[d]_0 &
X\oplus Y \ar[d]^{\left[\begin{smallmatrix} 0&0\\ 0&1 \end{smallmatrix}\right]} \\
F \ar[r]_-{\left[\begin{smallmatrix} u\\ 0 \end{smallmatrix}\right]} & X\oplus Y }$$
To this end, let $\alpha:Z\to F$ and
$\left[\begin{smallmatrix} \beta_1\\ \beta_2 \end{smallmatrix}\right]:Z\to X\oplus Y$ be morphisms
such that $\left[\begin{smallmatrix} u\\0 \end{smallmatrix}\right]\alpha=
\left[\begin{smallmatrix} 0&0\\0&1 \end{smallmatrix}\right]
\left[\begin{smallmatrix} \beta_1\\ \beta_2 \end{smallmatrix}\right]$.
Then $\beta_2=0$ and $u\alpha=0$. Hence $\alpha=0$, because $u$ is a monomorphism. It is easy to check that
$\beta_1:Z\to X$ is the unique morphism such that $0\beta_1=\alpha$ and
$\left[\begin{smallmatrix} 1\\0 \end{smallmatrix}\right]\beta_1=
\left[\begin{smallmatrix} \beta_1\\ \beta_2 \end{smallmatrix}\right]$.
Hence the required square is a pullback.

Since $M$ is strongly self-$F$-split, it follows that the upper horizontal morphism is a fully invariant section,
hence $X$ is a fully invariant direct summand of $M$.

(ii) $\Rightarrow$ (i) Assume that (ii) holds. Consider a pullback square:
$$\SelectTips{cm}{}
\xymatrix{
P \ar[r]^j \ar[d]_f & M \ar[d]^g \\
F \ar[r]_i & M }$$
Since $M$ is self-$F$-split, $P$ is a direct summand of $M$.
Since $i$ is fully invariant, there is a morphism $\alpha:F\to F$ such that $gi=i\alpha$.
The pullback property yields a unique morphism $\gamma:F\to P$ such that $f\gamma=\alpha$ and $j\gamma=i$.
Then $\gamma$ is a monomorphism, hence $F\subseteq P$. By hypothesis, $P$ must be a fully invariant direct summand of $M$.
Hence $M$ is strongly self-$F$-split.
\end{proof}

The following proposition generalizes \cite[Proposition~2.17]{Ungor}.

\begin{prop} Let $\A$ be an abelian category. Let $0\to F\stackrel{i}\to M\stackrel{d}\to C\to 0$
be a fully invariant short exact sequence in $\A$.
\begin{enumerate}
\item Assume that $M$ is (strongly) self-$F$-split. Let $N$ be a direct summand of $M$ with $F\subseteq N$.
Then for every (fully invariant) direct summand $K$ of $M$, $K\cap N$ is a (fully invariant) direct summand of $M$.
\item Assume that $M$ is dual (strongly) self-$F$-split. Let $N$ be a direct summand of $M$ with $N\subseteq F$.
Then for every (fully invariant) direct summand $K$ of $M$, $K+N$ is a (fully invariant) direct summand of $M$.
\end{enumerate}
\end{prop}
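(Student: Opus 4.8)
The plan is to prove part (1), the dual statement (2) following by the duality principle in abelian categories. Assume $M$ is (strongly) self-$F$-split, let $N$ be a direct summand of $M$ with $F\subseteq N$, and let $K$ be a (fully invariant) direct summand of $M$. I want to show $K\cap N$ is a (fully invariant) direct summand of $M$. The idea is to exhibit $K\cap N$ as the ``$F$-part'' of a suitable endomorphism of $M$, so that self-$F$-splitness forces it to be a direct summand. Concretely, write $M=K\oplus K'$ and denote by $e:M\to M$ the idempotent with image $K$ and kernel $K'$. I would like to find an endomorphism $g:M\to M$ for which the pullback $P$ of $i:F\to M$ along $g$ equals $K\cap N$ as a subobject of $M$; by Lemma \ref{l:kc}, $P\cong{\rm Ker}(dg)$, so it suffices to arrange ${\rm Ker}(dg)=K\cap N$.

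The natural candidate exploits the decomposition $M=N\oplus N'$ (with $F\subseteq N$) and the idempotent $e$ for $K$. Since $F\subseteq N$, the composite $d u_{N'}$ (where $u_{N'}:N'\to M$ is the inclusion) is a monomorphism-after-$d$ in the sense that ${\rm Ker}(d)=F\subseteq N$ meets $N'$ trivially; more precisely $d|_{N'}$ is a monomorphism and ${\rm Ker}(d)=F$ sits inside $N$. First I would take $g$ to be the composite $M\xrightarrow{\,1-e\,}M\xrightarrow{\pi_{N'}}N'\hookrightarrow M$, i.e. $g=u_{N'}\pi_{N'}(1-e)$, where $\pi_{N'}:M\to N'$ is the projection along $N$. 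Then $dg = d\,u_{N'}\pi_{N'}(1-e)$, and since $d|_{N'}$ is a monomorphism, ${\rm Ker}(dg)={\rm Ker}(\pi_{N'}(1-e))$. Now $\pi_{N'}(1-e)$ vanishes on an element precisely when $(1-e)$ sends it into $N$, i.e. on $e^{-1}(N)=e^{-1}(N)$; a short computation with the idempotent $e$ (whose image is $K$) gives ${\rm Ker}(\pi_{N'}(1-e))=\{x : (1-e)x\in N\}$. This is not quite $K\cap N$, so the choice of $g$ needs adjustment — this is the main obstacle.

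A cleaner route: since $K=eM$ is a direct summand, $K\cap N$ equals the pullback of $F\hookrightarrow M$ along a well-chosen $g$ only if we first pass to $K$ itself. Here I would use the ``in particular'' part of the paper's key theorem spirit: restrict attention to the summand $K$. By Corollary \ref{c:Fds}(1), the inclusion $F\cap K\to K$ is fully invariant, and if the machinery of the announced main result (transfer of $F$-splitness to summands $K_1$ of $M$ and $N_1$ of $N$) is available, then $K$ is (strongly) self-$(F\cap K)$-split. Within $K$, the subobject $K\cap N = N\cap K$ is a direct summand of $M$ iff it is a direct summand of $K$; and inside $K$ one checks $K\cap N$ is exactly the pullback of $F\cap K\hookrightarrow K$ along the projection-type endomorphism $K\xrightarrow{} K\cap N'\cap\cdots$. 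Rather than belabor this, the honest plan is: (a) reduce to showing $K\cap N$ is a direct summand of $K$; (b) exhibit $K\cap N$ as ${\rm Ker}(d'g')$ for the fully invariant sequence $0\to F\cap K\to K\to K/(F\cap K)\to 0$ and a suitable $g':K\to K$ built from the decomposition $K=(K\cap N)\oplus(K\cap N')$ guaranteed by Proposition \ref{p:fids}(1) applied to the fully invariant sequence $0\to F\cap K\to K\to \cdots\to 0$ and the splitting $K = (K\cap N)\oplus (K\cap N')$ — wait, this splitting itself requires $F\cap K\subseteq K\cap N$, which holds since $F\subseteq N$; and then (c) apply self-$(F\cap K)$-splitness of $K$ to conclude $K\cap N$ is a direct summand, and in the strong case, a fully invariant one.

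For the strong case, once $K\cap N$ is realized as the pullback $P$ over the fully invariant $i$, strong self-$F$-splitness of $M$ gives that $j:P\to M$ is a fully invariant section, so $K\cap N$ is a fully invariant direct summand of $M$. The hardest point, as flagged, is step (b): producing the endomorphism whose kernel-mod-$d$ is precisely $K\cap N$ and verifying the kernel computation rigorously via the pullback characterization of Lemma \ref{l:kc} rather than element-chasing. I expect this to go through by taking $g$ to be the idempotent projecting $M$ onto $N'$ followed by the inclusion, composed with $e$ (the idempotent for $K$), arranged as $g = u_{N}\,\pi_{N}\,(1-e) + (1-e)$ or a similar combination chosen so that ${\rm Ker}(dg) = {\rm Ker}(e)\cap{\rm something} = K'\oplus(K\cap N)$-type subobject collapses correctly; the bookkeeping is routine once the right $g$ is pinned down, but pinning it down is the crux.
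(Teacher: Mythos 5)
Your proposal does not close; both of your routes stop short of a proof, and the second one rests on a false claim. In your first attempt the obstacle is self-inflicted: you insist that the pullback of $i:F\to M$ along a single endomorphism $g$ be \emph{exactly} $K\cap N$, and you end by admitting that ``pinning down'' such a $g$ ``is the crux''. The paper's proof does not do this. Writing $M=N\oplus X=K\oplus Y$, it takes $g=f_1f_2$, where $f_2$ is the idempotent with image $K$ and kernel $Y$ and $f_1$ is the idempotent with kernel $N$ and image $X$; since $F\subseteq N$ and $F\cap X=0$, the pullback of $i$ along $f_1f_2$ is $(K\cap N)\oplus Y$, embedded in $M=K\oplus Y$ by $\left[\begin{smallmatrix} k&0\\0&1 \end{smallmatrix}\right]$ (this is verified by gluing elementary pullback squares, the outer one being the square from the proof of Theorem \ref{t:rel}). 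Self-$F$-splitness then makes this morphism a (fully invariant) section, Proposition \ref{p:fimatrix} strips off the extra summand $Y$ to show $k:K\cap N\to K$ is a (fully invariant) section, and Lemma \ref{l:trans} (using, in the strong case, that $K$ itself is a fully invariant summand --- a hypothesis your sketch never uses) gives that $K\cap N$ is a (fully invariant) direct summand of $M$. In your own notation this is precisely your first construction with $e$ in place of $1-e$: the pullback along $u_{N'}\pi_{N'}e$ is $e^{-1}(N)=(K\cap N)\oplus K'$, not $K\cap N$; the missing idea is to accept the extra summand and remove it afterwards, rather than to search for a $g$ whose pullback is $K\cap N$ on the nose.

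The ``cleaner route'' fails at step (b): the splitting $K=(K\cap N)\oplus(K\cap N')$ is not guaranteed. Proposition \ref{p:fids} decomposes the \emph{fully invariant} subobject $F$ along a direct decomposition of $M$; it says nothing about an arbitrary direct summand $K$, and the containment $F\cap K\subseteq K\cap N$ that you invoke is irrelevant. Concretely, take $F=0$, $M=\mathbb{Z}\oplus\mathbb{Z}$ (which is self-$0$-split, i.e.\ self-Rickart), $N$ and $N'$ the two coordinate summands, and $K$ the diagonal: then $(K\cap N)\oplus(K\cap N')=0\neq K$, so the decomposition you want to build $g'$ from does not exist. (The reduction preceding it --- that $K$ is (strongly) self-$(F\cap K)$-split via Corollary \ref{c:Fds} and the transfer results --- is legitimate and not circular, but it does not repair this step.) As it stands, neither route produces the required endomorphism, so the proposal has a genuine gap at its central construction.
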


\begin{proof} (1) Denote by $u:F\to N$ the inclusion monomorphism. Write $M=N\oplus X=K\oplus Y$
for some subobjects $X,Y$ of $M$. As in the proof of Theorem \ref{t:rel}, the following commutative square is a pullback:
$$\SelectTips{cm}{}
\xymatrix{
N \ar[rr]^-{j=\left[\begin{smallmatrix} 1\\0 \end{smallmatrix}\right]} \ar[d]_0 &&
M=N\oplus X \ar[d]^{f_1=\left[\begin{smallmatrix} 0&0\\0&1 \end{smallmatrix}\right]} \\
F \ar[rr]_-{i=\left[\begin{smallmatrix} u\\0 \end{smallmatrix}\right]} && M=N\oplus X }$$

Let $k:K\cap N\to K$ and $n:K\cap N\to N$ be the inclusion monomorphisms. Consider the following commutative diagram:
$$\SelectTips{cm}{}
\xymatrix{
(K\cap N)\oplus Y \ar[d]_{\left[\begin{smallmatrix} 1&0 \end{smallmatrix}\right]}
\ar[rr]^{l=\left[\begin{smallmatrix} k&0\\0&0 \end{smallmatrix}\right]} &&
M=K\oplus Y \ar[d]^{\left[\begin{smallmatrix} 1&0 \end{smallmatrix}\right]} \\
K\cap N \ar[rr]^-k \ar[d]_n && K \ar[d]^{\left[\begin{smallmatrix} 1\\0 \end{smallmatrix}\right]} \\
N \ar[rr]_-{j=\left[\begin{smallmatrix} j_1\\j_2 \end{smallmatrix}\right]} && M=K\oplus Y }$$
The upper and the lower squares are clearly pullbacks, hence so is the outer rectangle \cite[Lemma~5.1]{Kelly}.
Denote $f_2=\left[\begin{smallmatrix} 1&0\\0&0 \end{smallmatrix}\right]:K\oplus Y\to K\oplus Y$.
Glueing together the above 3 pullbacks, one obtains the following pullback square \cite[Lemma~5.1]{Kelly}:
$$\SelectTips{cm}{}
\xymatrix{
(K\cap N)\oplus Y \ar[r]^-{l} \ar[d]_0 & M \ar[d]^{f_1f_2} \\
F \ar[r]_-{i} & M }$$
Since $M$ is (strongly) self-$F$-split, $l=\left[\begin{smallmatrix} k&0\\0&0 \end{smallmatrix}\right]$
is a (fully invariant) section. It follows that $K\cap N$ is a
(fully invariant) direct summand of $K$ (by Proposition \ref{p:fimatrix}),
and so a (fully invariant) direct summand of $M$ (by Lemma \ref{l:trans}).
\end{proof}

Let $\mathcal{A}$ be an abelian category. We say that an object $M$ of $\mathcal{A}$ has the 
\emph{(strong) summand intersection property}, briefly \emph{SIP (SSIP)}, 
for a class $\mathcal{C}$ of direct summands of $M$ if 
the intersection of any finite family (any family) of objects from $\mathcal{C}$ belongs to $\mathcal{C}$.  
Dually, an object $M$ of $\mathcal{A}$ has the \emph{(strong) summand sum property}, briefly \emph{SSP (SSSP)}, 
for a class $\mathcal{C}$ of direct summands of $M$ if the sum of any finite family (any family) of objects from 
$\mathcal{C}$ belongs to $\mathcal{C}$. 

\begin{cor} \label{c:SIP} Let $\A$ be an abelian category. Let $0\to F\to M\to C\to 0$
be a fully invariant short exact sequence in $\A$.
\begin{enumerate}
\item Assume that $M$ is (strongly) self-$F$-split. Then:
\begin{enumerate}[(i)]
\item For every (fully invariant) direct summand $K$ of $M$, $F\cap K$ is a (fully invariant) direct summand of $M$.
\item $M$ has SIP for (fully invariant) direct summands containing $F$.
\end{enumerate}
\item Assume that $M$ is dual (strongly) self-$F$-split. Then:
\begin{enumerate}[(i)]
\item For every (fully invariant) direct summand $K$ of $M$, $F+K$ is a (fully invariant) direct summand of $M$.
\item $M$ has SSP for (fully invariant) direct summands contained in $F$.
\end{enumerate}
\end{enumerate}
\end{cor}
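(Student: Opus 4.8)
The plan is to deduce everything from the preceding Proposition (the one generalizing \cite[Proposition~2.17]{Ungor}) by choosing the auxiliary direct summand $N$ appropriately, and then to handle the SIP/SSP statements by a short induction. I will only spell out part (1), part (2) following by the duality principle in abelian categories.

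For part (1)(i), I would apply the Proposition with $N=M$: indeed $M$ is a direct summand of $M$ with $F\subseteq M$, so for every (fully invariant) direct summand $K$ of $M$ the object $K\cap M=K$ is a (fully invariant) direct summand of $M$ — but this is vacuous, so instead the right choice is to recall that the Proposition gives, for the fixed direct summand $N$ containing $F$ and every (fully invariant) direct summand $K$, that $K\cap N$ is a (fully invariant) direct summand of $M$. Taking $K$ itself to be $F$ is not allowed unless $F$ is a direct summand, so the cleanest route is: by the Proposition applied with $N$ any direct summand containing $F$ and with the chosen $K$, one gets $K\cap N$ as a direct summand; now specialize to $N=M$, giving nothing, so one genuinely needs the following observation instead. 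Since $M$ is self-$F$-split, applying Lemma \ref{l:kc} (or directly Definition \ref{d:fsplit}) to the identity morphism $g=1_M:M\to M$ shows that the pullback object $P$ equals $F$ (as $\mathrm{ker}(d\cdot 1_M)=\mathrm{ker}(d)=i$), whence $F$ is itself a (fully invariant, in the strong case) direct summand of $M$. Then for any (fully invariant) direct summand $K$, $F\cap K$ is the intersection of two direct summands with $F$ a direct summand, and we may now invoke the Proposition with $N=F$: $F$ is a direct summand containing $F$, so $K\cap F$ is a (fully invariant) direct summand of $M$. This proves (1)(i).

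For part (1)(ii), let $K_1,\dots,K_n$ be (fully invariant) direct summands of $M$, each containing $F$. I would argue by induction on $n$. The case $n=1$ is trivial and $K_1\supseteq F$. For the inductive step, set $L=K_2\cap\dots\cap K_n$, which by the induction hypothesis is a (fully invariant) direct summand of $M$ containing $F$. Now apply the Proposition with $N=K_1$ (a direct summand of $M$ containing $F$) and with the (fully invariant) direct summand $L$: we obtain that $K_1\cap L=K_1\cap K_2\cap\dots\cap K_n$ is a (fully invariant) direct summand of $M$. Since it visibly contains $F$, it lies in the class under consideration, completing the induction and hence (1)(ii).

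The main obstacle is purely organizational: making sure the auxiliary direct summand fed into the Proposition always contains $F$ (so that the Proposition applies) and always carries the "fully invariant" adjective in the strong case (so that the induction stays inside the right class). Both are automatic here — each $K_j$ and each partial intersection contains $F$ by hypothesis resp. by the inductive conclusion, and in the strong case each partial intersection is fully invariant by the strengthened conclusion of the Proposition — so no real difficulty arises. One should also note that the parenthetical "(strongly)" versions are handled uniformly by reading every parenthesis consistently, exactly as in the statement.
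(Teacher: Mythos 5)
Your argument is correct and is essentially the intended derivation: the paper states this corollary without proof as an immediate consequence of the preceding proposition, and your route — observe via Lemma \ref{l:kc} applied to $g=1_M$ that $F$ is a direct summand, then apply the proposition with $N=F$ for (i) and inductively with $N=K_1$, $K=K_2\cap\dots\cap K_n$ for (ii), part (2) by duality — is exactly that deduction. The only cosmetic issue is the false start in (i) (the detour through $N=M$), which you could simply delete.
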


\begin{theor} \label{t:epimono} Let $\A$ be an abelian category.
Let $0\to F\stackrel{i}\to N\stackrel{d}\to C\to 0$ be a fully invariant short exact sequence in $\A$.
Let $e:M\to M'$ be an epimorphism and let $m:N'\to N$ be a monomorphism in $\A$.
\begin{enumerate}
\item Assume that the inclusion monomorphism $u:F\cap N'\to N'$ is fully invariant and
$N$ is (strongly) $M$-$F$-split. Then $N'$ is (strongly) $M'$-$(F\cap N')$-split.
\item Assume that the induced epimorphism $q:N/N'\to ((F+N')/N')$ is fully coinvariant and
$N$ is dual (strongly) $M$-$F$-split. Then $N/N'$ is dual (strongly) $M/M'$-$((F+N')/N')$-split.
\end{enumerate}
\end{theor}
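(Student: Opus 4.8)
The plan is to prove part (1) in detail and obtain part (2) by the duality principle, as is the paper's convention. Fix a morphism $g':M'\to N'$; I must show that $\ker(d'g')$ is a section (and, in the strong case, a fully invariant section), where $d':N'\to N'/(F\cap N')$ is the cokernel of $u$. The first step is to transport $g'$ up to a morphism $g:M\to N$ using the given epimorphism $e:M\to M'$ and monomorphism $m:N'\to N$: set $g=m g' e:M\to N$. Since $N$ is (strongly) $M$-$F$-split, $\ker(dg)$ is a (fully invariant) section, i.e. the pullback $P$ of $i$ along $g$ is a (fully invariant) direct summand of $M$, say $M=P\oplus P'$.

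Next I want to relate $\ker(dg)$ to $\ker(d'g')$. The key diagram is the one comparing the fully invariant short exact sequence $0\to F\to N\to C\to 0$ with $0\to F\cap N'\to N'\to N'/(F\cap N')\to 0$ via $m$ and $u$; note the square with $u,m$ on one side and the inclusion $F\cap N'\hookrightarrow F$ and $i$ on the other is a pullback. The plan is to exhibit $\ker(d'g')$ (the pullback of $u$ along $g':M'\to N'$) and then pull back once more along $e:M\to M'$, using the pasting lemma for pullbacks \cite[Lemma~5.1]{Kelly} to identify the composite pullback with $\ker(dg)=\ker(d m g' e)$. Concretely: let $P_0\to M'$ be the pullback of $u$ along $g'$; pulling $P_0$ back along $e$ gives an object which, by pasting, is the pullback of $F\cap N'$ along $mg'e$, and pasting that with the $u,m$ pullback square gives the pullback of $i$ along $mg'e=g$, namely $P$. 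So there is a pullback square with $P$ over $M$, $P_0$ over $M'$, and $e$ on the bottom. Since $e$ is an epimorphism and $P\to M$ is a split monomorphism, a short diagram chase (or the fact that epimorphisms are preserved under pullback and split monos are detected after composing with a retraction) shows $P_0\to M'$ is a split monomorphism, i.e. $\ker(d'g')$ is a section; hence $N'$ is $M'$-$(F\cap N')$-split.

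For the strong case I additionally assume $u:F\cap N'\to N'$ is fully invariant (this is the hypothesis) and that $P\to M$ is fully invariant, and I must upgrade $P_0\to M'$ to a fully invariant section. The idea: given any $h':M'\to M'$, lift nothing — instead use surjectivity of $e$. Pick (via the pullback/epi structure) a morphism $h:M\to M$ compatible with $h'$ in the sense $eh=h'e$; full invariance of $P$ in $M$ gives that $h$ restricts to $P$, and then one checks this restriction descends along the pullback projection $P\to P_0$ to give the required factorization of $h'$ through $P_0\to M'$. Full invariance of $u$ enters to guarantee the relevant descended map lands inside $P_0$ rather than merely in $M'$. Alternatively — and this may be cleaner — I would invoke Proposition \ref{p:POPB}: the pullback square exhibiting $P$ over $P_0$ with $e$ on the bottom has $e$ a cokernel-type epimorphism, $P\to M$ a fully invariant section, so $P_0\to M'$ is a fully invariant section by (the appropriate form of) that proposition after checking the hypotheses match.

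The main obstacle I anticipate is the bookkeeping in the strong case: Proposition \ref{p:POPB}(2) is phrased for a kernel $g:B'\to B$ with $i=gi'$ a fully invariant section yielding $i'$ fully invariant, which is about \emph{factoring} a fully invariant section, not about \emph{pulling one back} along an epimorphism, so I will either need a dual/variant of that proposition or a direct diagram argument. Getting the compatible lift $h:M\to M$ of $h':M'\to M'$ is not automatic in a general abelian category (epimorphisms need not split), so the correct move is probably to work with the pullback $M\times_{M'}M$ and exploit that $e$ being an epi makes the relevant projection an epi, then push the full-invariance data through; this is the step that needs care. Everything else — the construction of $g$, the pasting of pullbacks, the epi-is-stable-under-pullback observation — is routine.
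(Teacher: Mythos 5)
Your treatment of the plain (non-strong) case is essentially the paper's argument: transport $g'$ to $g=mg'e$, paste the pullback squares via \cite[Lemma~5.1]{Kelly} to sit $P=\Ker(dg)$ over $P_0=\Ker(d'g')$ with $e$ on the bottom, and use that this square, being a pullback of the epimorphism $e$, is also a pushout (equivalently, your diagram chase: a retraction $r$ of $j$ gives $pr$ killing $\ker(e)\cong\ker(p)$, hence $pr=se$ with $sl=1_{P_0}$ since $p$ is epi). One small remark: the hypothesis that $u:F\cap N'\to N'$ is fully invariant is needed in the plain case too, not only in the strong one, since otherwise ``$M'$-$(F\cap N')$-split'' is not even defined.

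In the strong case, however, there is a genuine gap, and you have correctly located it but not closed it. Your first suggestion --- choosing $h:M\to M$ with $eh=h'e$ for an arbitrary $h':M'\to M'$ --- is not available in a general abelian category: $e$ need not split and endomorphisms of $M'$ need not lift along $e$, so full invariance of $P$ in $M$ cannot be pushed down this way. Your second suggestion, invoking Proposition \ref{p:POPB}, is the right instinct but is left at ``after checking the hypotheses match,'' and as you note, part (2) of that proposition (factoring a fully invariant section through a kernel) is not the statement you need. The missing step is the cokernel translation that makes part (1) apply: form the commutative diagram with exact rows $P\stackrel{j}\to M\stackrel{b}\to D$ and $P_0\stackrel{l}\to M'\stackrel{d'}\to D$, where $b=\mathrm{coker}(j)$, $d'=\mathrm{coker}(l)$ and $b=d'e$. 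If $j$ is a fully invariant section, then by Lemma \ref{l:eq} its cokernel $b$ is a fully coinvariant retraction; since $e$ is a cokernel (every epimorphism in an abelian category is), Proposition \ref{p:POPB}(1) applied to $b=d'e$ gives that $d'$ is a fully coinvariant retraction, and hence, again by Lemma \ref{l:eq}, $l=\ker(d')$ is a fully invariant section. Without this dualization through cokernels, your strong case remains unproved; with it, your outline becomes the paper's proof.
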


\begin{proof} (1) Let $g:M'\to N'$ be a morphism in $\A$. Let $G=F\cap N'$ and consider the pullback of $u$ and $g$ to get
morphisms $l:Q\to M'$ and $q:Q\to G$. Let $t:G\to F$ be the inclusion monomorphism. Consider the pullback of $i$ and $mge$ to get
morphisms $j:P\to M$ and $f:P\to F$. The pullback property of the square $GN'FN$ yields a unique morphism
$h:P\to G$ such that $th=f$ and $uh=gej$. The pullback property of the square $QM'GN'$ yields a unique morphism
$p:P\to Q$ such that $qp=h$ and $lp=ej$. In this way one constructs the following commutative diagram:
$$\SelectTips{cm}{}
\xymatrix{
P \ar[r]^j \ar[d]^p \ar@/_1pc/[dd]_h \ar@/_3pc/[ddd]_f & M \ar[d]^e \\
Q \ar[r]^l \ar[d]^q & M' \ar[d]^g \\
G \ar[r]^u \ar[d]_t & N' \ar[d]^m \\
F \ar[r]_i & N }
$$
The rectangle $PMFN$ and the square $GN'FN$ are pullbacks, hence so is the rectangle $PMGN'$ \cite[Lemma~5.1]{Kelly}.
Since the square $QM'GN'$ is a pullback, so is the square $PMQM'$ \cite[Lemma~5.1]{Kelly}.
Since $N$ is (strongly) $M$-$F$-split, $j$ is a (fully invariant) section.
It is easy to check that the square $PMQM'$ is also a pushout, hence $l$ is a section.
We may construct the following commutative diagram with exact rows:
$$\SelectTips{cm}{}
\xymatrix{
P \ar[r]^j \ar[d]_p & M \ar[r]^b \ar[d]^e & D \ar@{=}[d] \\
Q \ar[r]_l & M' \ar[r]_{d'} & D }
$$
If $j$ is a fully invariant section, then $b=d'e$ is a fully coinvariant retraction.
Hence $d'$ is a fully coinvariant retraction by Proposition \ref{p:POPB},
and so $l$ is a fully invariant section. Hence $N'$ is (strongly) $M'$-$(F\cap N')$-split.
\end{proof}

The following corollary generalizes \cite[Proposition~2.12 and Theorem~4.2]{Ungor}.

\begin{cor} \label{c:strel} Let $\A$ be an abelian category. Let $M$ be an object 
and let $0\to F\to N\to C\to 0$ be a fully invariant short exact sequence in $\A$. Then:
\begin{enumerate}
\item $N$ is (strongly) $M$-$F$-split if and only if for every direct summand $M_1$ of $M$ and 
for every direct summand $N_1$ of $N$, $N_1$ is (strongly) $M_1$-$(F\cap N_1)$-split.
\item $N$ is dual (strongly) $M$-$F$-split if and only if for every direct summand $M_1$ of $M$ 
and for every direct summand $N_1$ of $N$, $N/N_1$ is dual (strongly) $M/M_1$-$((F+N_1)/N_1)$-split.
\end{enumerate}
\end{cor}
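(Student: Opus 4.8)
The plan is to derive this as a direct consequence of Theorem \ref{t:epimono}, using the fact that direct summands give both epimorphisms (projections) and monomorphisms (injections), and that the relevant full-invariance hypotheses on the intersected/quotient subobjects come for free in the direct-summand setting via Corollary \ref{c:Fds}. I will only write out part (1), since (2) follows by the duality principle in abelian categories.

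For the forward implication of (1), suppose $N$ is (strongly) $M$-$F$-split, and let $M_1$ be a direct summand of $M$ and $N_1$ a direct summand of $N$. Write $M=M_1\oplus M_2$ and $N=N_1\oplus N_2$. Take $e:M\to M_1$ to be the canonical projection (an epimorphism) and $m:N_1\to N$ to be the canonical injection (a monomorphism). By Corollary \ref{c:Fds}(1) applied to the decomposition $N=N_1\oplus N_2$ and the fully invariant short exact sequence $0\to F\to N\to C\to 0$, the inclusion monomorphism $u:F\cap N_1\to N_1$ is fully invariant. Thus the hypotheses of Theorem \ref{t:epimono}(1) are satisfied, and we conclude that $N_1$ is (strongly) $M_1$-$(F\cap N_1)$-split.

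For the converse, it suffices to take $M_1=M$ and $N_1=N$ (both are direct summands of themselves), which immediately recovers the statement that $N$ is (strongly) $M$-$F$-split, since $F\cap N=F$. The only subtlety worth flagging is the appearance of the notion "$N_1$ is (strongly) $M_1$-$(F\cap N_1)$-split" in the statement: for this to make sense one needs $0\to F\cap N_1\to N_1\to N_1/(F\cap N_1)\to 0$ to be a fully invariant short exact sequence, which is exactly the content of the full-invariance of $u:F\cap N_1\to N_1$ guaranteed by Corollary \ref{c:Fds}(1); so the statement is well-posed precisely because that corollary applies.

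The main (and essentially only) obstacle is bookkeeping: one must check that the "epimorphism plus monomorphism" framework of Theorem \ref{t:epimono} specializes correctly, i.e.\ that a projection is genuinely an epimorphism and an injection genuinely a monomorphism in an abelian category, and that the subobject $F\cap N_1$ produced by the pullback in Theorem \ref{t:epimono} coincides (up to the canonical isomorphism) with the intersection appearing in the corollary. These are routine, so no computation beyond invoking the cited results is needed. For part (2), one dualizes: replace projections by injections and injections by projections, replace $F\cap N_1$ by $(F+N_1)/N_1$, invoke Corollary \ref{c:Fds}(2) to get full coinvariance of the relevant induced epimorphism, and apply Theorem \ref{t:epimono}(2).
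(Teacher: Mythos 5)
Your proposal is correct and follows exactly the paper's argument: the forward direction combines Corollary \ref{c:Fds} (to get full invariance of $u:F\cap N_1\to N_1$) with Theorem \ref{t:epimono} applied to the canonical projection $M\to M_1$ and injection $N_1\to N$, and the converse takes $M_1=M$, $N_1=N$. Part (2) by duality is also how the paper handles it.
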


\begin{proof} (1) Note that the inclusion monomorphism $u:F\cap N_1\to N_1$ 
is fully invariant by Corollary \ref{c:Fds}. Then use Theorem \ref{t:epimono}.
\end{proof}

The following corollary generalizes \cite[Lemma~2.10]{Ungor}.

\begin{cor} Let $\A$ be an abelian category. Let $0\to F\to M\to M/F\to 0$
be a fully invariant short exact sequence and $0\to N\to M\to M/N\to 0$
a short exact sequence in $\A$.
\begin{enumerate}
\item Assume that every morphism $N\to N$ can be extended
to a morphism $M\to M$ in $\A$.  If $M$ is (strongly)
self-$F$-split, then $N$ is (strongly) self-$(F\cap N)$-split.
\item Assume that every morphism $M/N\to M/N$ can be
lifted to a morphism $M\to M$ in $\A$. If $M$ is dual
(strongly) self-$F$-split, then $M/N$ is dual (strongly)
self-$((F+N)/N)$-split.
\end{enumerate}
\end{cor}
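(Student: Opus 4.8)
The plan is to reduce this Corollary to the main Theorem \ref{t:epimono} by choosing the epimorphism $e$ and the monomorphism $m$ appropriately, exactly as Corollary \ref{c:strel} does, and then to verify that the additional extension/lifting hypothesis supplies the fully-invariance condition needed to invoke Theorem \ref{t:epimono}. For part (1), I would take $M'=M$ with $e=1_M:M\to M$ the identity epimorphism, and let $m:N\to M$ be the given monomorphism from the short exact sequence $0\to N\to M\to M/N\to 0$. With these choices, Theorem \ref{t:epimono}(1) says precisely that if the inclusion monomorphism $u:F\cap N\to N$ is fully invariant and $M$ is (strongly) $M$-$F$-split (i.e. strongly self-$F$-split), then $N$ is (strongly) $M$-$(F\cap N)$-split, which is (strongly) $N$-$(F\cap N)$-split (i.e. strongly self-$(F\cap N)$-split) once we also know relative splitness transfers down to $N$ as the domain. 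Here I would be slightly careful: Theorem \ref{t:epimono} with $e=1_M$ gives $N$ is $M$-$(F\cap N)$-split, and then I would invoke Corollary \ref{c:strel}(1) (or the direct-summand restriction principle) — but $N$ need not be a direct summand of $M$, so instead the cleanest route is to apply Theorem \ref{t:epimono} directly with $M=N$ as well: since every morphism $N\to N$ extends to $M\to M$, one can regard the relevant pullback diagrams over $N$ as restrictions of those over $M$. Let me reconsider and instead apply Theorem \ref{t:epimono} with $e:M\to N$ — no, that is not an epimorphism in general. The correct application is: take $e=1_M$ and $m:N\hookrightarrow M$, obtaining that $N$ is $M$-$(F\cap N)$-split; then separately argue that $N$ is $N$-$(F\cap N)$-split using that every endomorphism of $N$ lifts to $M$, so every morphism $g:N\to N$ is of the form $f|_N$ for some $f:M\to M$, and the pullback of $F\cap N\hookrightarrow N$ along $g$ coincides with the pullback along the restriction, so splitness for all such $g$ follows.

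The key steps, in order, are then: (i) verify that the extension hypothesis ``every morphism $N\to N$ can be extended to a morphism $M\to M$'' implies, via Proposition \ref{p:FG2}(1)(ii), that the inclusion monomorphism $u:F\cap N\to N$ is fully invariant — this is the crucial ingredient and it is exactly the content of Proposition \ref{p:FG2}(1)(ii) applied to the two short exact sequences $0\to F\to M\to M/F\to 0$ and $0\to N\to M\to M/N\to 0$; (ii) apply Theorem \ref{t:epimono}(1) with $e=1_M$ and $m:N\to M$ the given monomorphism to conclude $N$ is (strongly) $M$-$(F\cap N)$-split; (iii) upgrade from $M$-$(F\cap N)$-split to self-$(F\cap N)$-split (i.e. $N$-$(F\cap N)$-split) by using again that every morphism $g:N\to N$ factors as the restriction of some $f:M\to M$, so that the defining pullback square for $g$ over $N$ embeds into the one for $f$ over $M$ and the section property descends. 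Part (2) follows by the duality principle: dualize each step, replacing the extension hypothesis on $N\to N$ by the lifting hypothesis on $M/N\to M/N$, replacing $F\cap N$ by $(F+N)/N$, using Proposition \ref{p:FG2}(2)(ii) to get that the induced epimorphism $q:M/N\to M/(F+N)$ is fully coinvariant, and invoking Theorem \ref{t:epimono}(2).

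The main obstacle I anticipate is step (iii): passing from relative $M$-$(F\cap N)$-splitness of $N$ to self-$(F\cap N)$-splitness of $N$. One must check that every endomorphism of $N$ arises (in the relevant sense) from an endomorphism of $M$ in a way compatible with the pullback construction defining $F$-splitness, so that the section produced by the $M$-relative hypothesis is actually a section witnessing the self-relative property. Concretely, given $g:N\to N$ with lift $f:M\to M$ satisfying $fm=mg$ (where $m:N\hookrightarrow M$), I would form the pullback $P$ of $i:F\to M$ along $f$ and the pullback $P'$ of $u:F\cap N\to N$ along $g$, and show the natural comparison $P'\to P$ together with the pullback of $m$ identifies the section $j':P'\to N$ obtained from $N$ being $M$-$(F\cap N)$-split — actually a subtlety is that Theorem \ref{t:epimono} with $e=1_M$ already produces, for each $g:N\to N$, the section $P'\to N$ directly, since in the theorem $g:M'\to N'$ is an arbitrary morphism and here $M'=M$, $N'=N$, so $g$ ranges over all morphisms $M\to N$, in particular over all $mg_0$ for $g_0:N\to N$; this means step (iii) is in fact subsumed and one needs only note that morphisms $N\to N$ give morphisms $M\to N$ by precomposition with nothing — rather by the fact that $g_0:N\to N$ composed with $m:N\to M$ is not the right direction. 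I would resolve this by observing that $m:N\to M$ is a monomorphism, and for $g_0:N\to N$ the composite $m g_0:N\to ?$ still lands in $N$; the point is that Theorem \ref{t:epimono}(1) as stated with $M'=M$ concludes $N$ is $M$-$(F\cap N)$-split, and then the extension hypothesis is exactly what lets each $g_0:N\to N$ be replaced by $f:M\to N$ (namely $f=m^{-1}$-free: take $f$ any extension, then $f|_N = g_0$), so the pullback of $u$ along $g_0$ equals the pullback of $u$ along $f$ restricted, giving the section. If this identification is routine (which I expect), the whole proof is short; the one genuinely substantive invocation is Proposition \ref{p:FG2}(1)(ii).
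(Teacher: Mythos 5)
Your steps (i) and (ii) are sound and coincide with the paper's own (very terse) proof, which likewise consists of invoking Proposition \ref{p:FG2}(1)(ii) to get that $u:F\cap N\to N$ is fully invariant and then ``using'' Theorem \ref{t:epimono}. But the point you keep circling in step (iii) is a genuine gap, not a routine identification. Theorem \ref{t:epimono} transfers splitness to $N'$ only relative to a \emph{quotient} $M'$ of $M$; with the only available choices ($e=1_M$, $m:N\hookrightarrow M$) it yields exactly ``$N$ is $M$-$(F\cap N)$-split'', and there is no epimorphism $M\to N$ that would make $M'=N$. Your proposed bridge --- extend $g_0:N\to N$ to $f:M\to M$, note that the pullback of $u$ along $g_0$ is the pullback of $i:F\to M$ along $f$ intersected with $N$, and claim ``the section restricts'' --- is false as a matter of principle: a split monomorphism does not remain split after pulling back along a monomorphism. (For instance, in abelian groups $P=\mathbb{Z}/4\oplus 0$ is a direct summand of $\mathbb{Z}/4\oplus\mathbb{Z}/4$, but its intersection with the subgroup $N$ generated by $(1,2)$ is the non-summand $2N\cong\mathbb{Z}/2$ of $N\cong\mathbb{Z}/4$.) Also note that your extension $f$ is a morphism $M\to M$, not $M\to N$, so the $M$-$(F\cap N)$-splitness obtained in step (ii) cannot even be applied to it.

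Moreover, no patch of step (iii) can work at this level of generality, because the desired implication fails outright: take $N=\prod_{n\ge 1}\mathbb{Z}/p^n\mathbb{Z}$, let $M=E(N)$ be its injective hull and $F=t(M)$ its torsion subgroup. Then $0\to F\to M\to M/F\to 0$ is fully invariant ($t$ is a preradical), $M\cong F\oplus C$ with $C$ a $\mathbb{Q}$-vector space, hence $M$ is self-$F$-split by Theorem \ref{t:key}, and every morphism $N\to N$ extends to $M\to M$ since $M$ is injective; yet $F\cap N=t(N)$ is not a direct summand of $N$ (a reduced unbounded torsion group cannot be a summand of the algebraically compact group $N$), so taking $g=1_N$ in Definition \ref{d:fsplit} shows $N$ is not self-$(F\cap N)$-split. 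So the difficulty you flagged is real: the passage from $M$-$(F\cap N)$-splitness to self-$(F\cap N)$-splitness needs an additional hypothesis supplying the epimorphism required by Theorem \ref{t:epimono} (as in Corollary \ref{c:strel}, where $N$ is a direct summand of $M$), and the extension hypothesis alone does not provide it; the same remark applies dually to your treatment of part (2). In short, your write-up reproduces the paper's two ingredients correctly, but the step you could not make precise is precisely the step that cannot be made.
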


\begin{proof} By Proposition \ref{p:FG2}, the inclusion
monomorphism $u: F\cap N\to N$ is fully invariant. Then use Theorem \ref{t:epimono}.
\end{proof}

Let $\A$ be an abelian category and let $r$ be a preradical of $\A$. Then one may define a 
preradical $r^{-1}$ of $\A^{\rm op}$ by $r^{-1}(A)=A/r(A)$ for every object $A$ of $\A$. 
Recall that $r$ is called \emph{hereditary} if $r$ is a left exact functor, 
and \emph{cohereditary} if $r^{-1}$ is a right exact functor (e.g., see \cite{BKN}).

\begin{cor} Let $\A$ be an abelian category and let $r$ be a preradical of $\A$.
Let $e:M\to M'$ be an epimorphism and let $m:N'\to N$ be a monomorphism in $\A$.
\begin{enumerate}
\item Assume that $r$ is hereditary and $N$ is (strongly) $M$-$r(N)$-split. 
Then $N'$ is (strongly) $M'$-$r(N')$-split.
\item Assume that $r$ is cohereditary and $N$ is dual (strongly) $M$-$r(N)$-split. 
Then $N/N'$ is dual (strongly) $M/M'$-$r(N/N')$-split.
\end{enumerate}
\end{cor}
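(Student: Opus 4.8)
The plan is to deduce this corollary from Theorem~\ref{t:epimono} by checking that the hereditariness (resp.\ cohereditariness) of the preradical $r$ supplies exactly the hypotheses required there. The central point is that $r$ being hereditary forces $r(N') = r(N)\cap N'$ for any monomorphism $m:N'\to N$, so that the fully invariant short exact sequence $0\to r(N)\to N\to N/r(N)\to 0$ attached to $r$ (via Proposition~\ref{p:fiprerad}) intersects with $N'$ to give the fully invariant short exact sequence $0\to r(N')\to N'\to N'/r(N')\to 0$, again attached to $r$.

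First I would set $F = r(N)$, which gives a fully invariant short exact sequence $0\to F\to N\to C\to 0$ by Proposition~\ref{p:fiprerad}. Next I would show that hereditariness of $r$ yields $F\cap N' = r(N')$ as subobjects of $N'$: the inclusion $r(N')\subseteq F\cap N'$ holds because $r$ is a subfunctor of the identity and $m$ induces $r(N')\to r(N)=F$ landing inside $N'$; the reverse inclusion is precisely the statement that the restriction of $r$ to the subobject $N'$ agrees with $r(N')$, which is the content of left exactness of $r$ applied to $0\to N'\to N$ (equivalently, one invokes that a hereditary preradical is idempotent on subobjects in the standard way). Consequently the inclusion monomorphism $u:F\cap N'\to N'$ is the canonical inclusion $r(N')\to N'$, and by Proposition~\ref{p:fiprerad} again this is a fully invariant monomorphism. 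Since $N$ is (strongly) $M$-$r(N)$-split, i.e.\ (strongly) $M$-$F$-split, Theorem~\ref{t:epimono}(1) applies and gives that $N'$ is (strongly) $M'$-$(F\cap N')$-split, that is, (strongly) $M'$-$r(N')$-split.

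Part (2) follows by the duality principle in abelian categories, applied to the preradical $r^{-1}$ of $\A^{\op}$, for which cohereditariness of $r$ translates into left exactness of $r^{-1}$; alternatively one argues directly that cohereditariness of $r$ gives $(F+N')/N' = r(N/N')$ for the epimorphism $N\to N/N'$, so that the induced epimorphism $q:N/N'\to (F+N')/N'$ is the canonical fully coinvariant epimorphism $N/N'\to r(N/N')$'s cokernel direction, and then Theorem~\ref{t:epimono}(2) yields the conclusion.

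I expect the main obstacle to be the identification $F\cap N' = r(N')$: one must be careful that ``hereditary'' is being used in the categorical sense (left exactness of the functor $r$) and that this genuinely forces the subobject-restriction property in an arbitrary abelian category, not merely in module categories where one can argue elementwise. Once this identification is secured, the rest is a purely formal application of the already-proved Theorem~\ref{t:epimono}, so the proof should be short; in the write-up I would only prove part~(1) and invoke the duality principle for part~(2), in keeping with the convention announced in the introduction.
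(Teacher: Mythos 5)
Your proposal is correct and follows essentially the same route as the paper: identify $r(N)\cap N'=r(N')$ using hereditariness of $r$ (the paper simply cites the standard fact from Bican--Kepka--Nemec), note that the inclusion $r(N')\to N'$ is then fully invariant since $r$ is a preradical, and apply Theorem~\ref{t:epimono}; part (2) is handled dually via cohereditariness giving $(r(N)+N')/N'=r(N/N')$. No substantive difference from the paper's proof.
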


\begin{proof} If $r$ is hereditary, then $r(N)\cap N'=r(N')$ is fully invariant in $N'$ \cite[I.2.1]{BKN}. 
Also, if $r$ is cohereditary, then $(r(N)+N')/N'=r(N/N')$ is fully invariant in $N/N'$ \cite[I.2.8]{BKN}. 
Then use Theorem \ref{t:epimono}. 
\end{proof}

\section{$F$-split objects versus Rickart objects}

Let us note that (strongly) relative $F$-split and dual (strongly) relative $F$-split objects 
generalize (strongly) relative Rickart and dual (strongly) relative Rickart objects defined as follows.

\begin{defi}[{\cite{C-K,CO1}}] \rm Let $M$ and $N$ be objects of an abelian category $\mathcal{A}$. Then $N$ is called:
\begin{enumerate}
\item \emph{(strongly) $M$-Rickart} if for every morphism $f:M\to N$, ${\rm ker}(f)$ is a (fully invariant) section.
\item \emph{dual (strongly) $M$-Rickart} if for every morphism $f:M\to N$, ${\rm coker}(f)$ is a (fully invariant) retraction.
\item \emph{(strongly) self-Rickart} if $N$ is (strongly) $N$-Rickart.
\item \emph{dual (strongly) self-Rickart} if $N$ is dual (strongly) $N$-Rickart.
\end{enumerate}
\end{defi}

\begin{remark} \label{r:Rickart} \rm Let $M$ and $N$ be objects of an abelian category $\mathcal{A}$. Then
$N$ is (strongly) $M$-Rickart if and only if $N$ is (strongly) $M$-$0$-split, while
$N$ is (strongly) dual $M$-Rickart if and only if $N$ is dual (strongly) $M$-$M$-split. 
Also, $N$ is (strongly) self-Rickart if and only if $N$ is (strongly) self-$0$-split, while
$N$ is (strongly) dual self-Rickart if and only if $N$ is dual (strongly) self-$N$-split. 
\end{remark}

The following theorem generalizes \cite[Theorem~2.3 and Proposition~2.4]{Ungor}, and is one of the key results of the paper.

\begin{theor}\label{t:key} Let $\A$ be an abelian category.
Let $0\to F\stackrel{i}\to M\stackrel{d}\to C\to 0$ be a fully invariant short exact sequence in $\A$.
\begin{enumerate}
\item The following are equivalent:
\begin{enumerate}[(i)]
\item $M$ is (strongly) self-$F$-split.
\item $M\cong F\oplus C$ and $C$ is (strongly) self-Rickart.
\end{enumerate}
\item The following are equivalent:
\begin{enumerate}[(i)]
\item $M$ is dual (strongly) self-$F$-split.
\item $M\cong F\oplus C$ and $F$ is dual (strongly) self-Rickart.
\end{enumerate}
\end{enumerate}
\end{theor}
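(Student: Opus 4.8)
The plan is to prove part (1) only, since part (2) will follow by the duality principle in abelian categories. The two directions require separately establishing that self-$F$-splitness forces the splitting $M\cong F\oplus C$, and then matching the Rickart condition on $C$ with the splitting condition on the pullbacks defining self-$F$-splitness.

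For (i) $\Rightarrow$ (ii), I would start by applying the defining pullback square of Definition \ref{d:fsplit} to a cleverly chosen morphism $g:M\to M$, namely $g=i d$ (the composite $M\stackrel{d}\to C$, then... no --- rather I want a morphism whose pullback along $i$ exhibits $F$ as a summand). The natural choice is the identity $g=1_M$: the pullback of $i:F\to M$ along $1_M$ is just $F$ itself with $j=i$, and self-$F$-splitness then says $i:F\to M$ is a (fully invariant) section, so $F$ is a direct summand of $M$, say $M=F\oplus C'$ with $C'\cong M/F\cong C$. This immediately gives $M\cong F\oplus C$. Next, to see $C$ is (strongly) self-Rickart, I take an arbitrary morphism $f:C\to C$ and must show $\ker(f)$ is a (fully invariant) section in $C$. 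I would build from $f$ a morphism $g:M\to M$ by transporting through the decomposition $M=F\oplus C'$ --- for instance $g=\left[\begin{smallmatrix} 1&0\\ 0&f \end{smallmatrix}\right]$ on $F\oplus C'$ --- and then compute the pullback of $i$ along $g$; one checks $dg$ corresponds (up to the identification $C\cong C'$) to $f$ composed with the projection, so by Lemma \ref{l:kc} the section $\ker(dg)$ decomposes as $F\oplus\ker(f)$, whence $\ker(f)$ is a section in $C$. The fully invariant refinement uses Proposition \ref{p:fimatrix} to peel off the $F$-summand and recover full invariance of $\ker(f)$ inside $C$.

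For (ii) $\Rightarrow$ (i), assume $M=F\oplus C$ (with $i=\left[\begin{smallmatrix} 1\\ 0 \end{smallmatrix}\right]$ and $d=\left[\begin{smallmatrix} 0&1 \end{smallmatrix}\right]$, using that the given short exact sequence must then be the canonical split one, since $F=r(M)$ determines the subobject) and $C$ self-Rickart. Given $g:M\to M$, write $g=\left[\begin{smallmatrix} a&b\\ c&e \end{smallmatrix}\right]$; then $dg=\left[\begin{smallmatrix} c&e \end{smallmatrix}\right]:F\oplus C\to C$. I would argue that $e:C\to C$ and the whole $dg$ have ``the same'' kernel up to a section --- more precisely, $\ker(dg)$ fits into a pullback/diagram relating it to $\ker(e)$, and since $\ker(e)$ is a section in $C$ and the $F$-component contributes a further summand, $\ker(dg)$ is a section in $M$. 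By Lemma \ref{l:kc} this is exactly self-$F$-splitness. For the strong version, one additionally invokes Theorem \ref{t:rel}: $M$ is strongly self-$F$-split iff it is self-$F$-split and every direct summand containing $F$ is fully invariant; since any such summand is $F\oplus D$ for a summand $D$ of $C$, and strong self-Rickartness of $C$ makes $D$ fully invariant in $C$, one deduces $F\oplus D$ is fully invariant in $M$ using the fully invariant short exact sequence together with Proposition \ref{p:fimatrix} (or directly checking that endomorphisms of $M$ preserve $F\oplus D$ because they preserve $F$ and induce endomorphisms of $C$ preserving $D$).

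The main obstacle I anticipate is the bookkeeping in the $F\oplus C$ decomposition: once $M=F\oplus C'$ is established, one must verify that the fully invariant short exact sequence $0\to F\to M\to C\to 0$ is (isomorphic to) the canonical split one $0\to F\to F\oplus C'\to C'\to 0$ --- this needs the uniqueness of the cokernel together with $i$ being the inclusion into the first summand --- and then every matrix computation has to respect this identification. A secondary subtlety is the ``fully invariant'' layer throughout: showing that $\ker(f)$ being a fully invariant section in $C$ is \emph{equivalent} to $\ker(dg)$ being a fully invariant section in $M$ (not just that one implies the other) requires carefully using Proposition \ref{p:fimatrix} in both directions and the fact that $F$ itself is a fully invariant summand. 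I would handle the strong case cleanly by reducing it to Theorem \ref{t:rel} rather than re-deriving everything, which keeps the argument short.
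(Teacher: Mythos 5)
Your direction (i) $\Rightarrow$ (ii) is essentially the paper's proof: pull back along $1_M$ to see that $i$ is a section, then pull back along $\left[\begin{smallmatrix}1&0\\0&f\end{smallmatrix}\right]$ and peel off the $F$-summand with Proposition \ref{p:fimatrix}. The genuine gap is in (ii) $\Rightarrow$ (i). Writing $g=\left[\begin{smallmatrix}a&b\\c&e\end{smallmatrix}\right]$ and $dg=\left[\begin{smallmatrix}c&e\end{smallmatrix}\right]$, you assert that because $\ker(e)$ is a section in $C$, ``the $F$-component contributes a further summand'' and hence $\ker(dg)$ is a section in $M$. As stated this is false: ${\rm Ker}\left[\begin{smallmatrix}c&e\end{smallmatrix}\right]$ is a pullback of $c$ and $e$, and it need not split even when $\ker(e)$ splits and $C$ is self-Rickart. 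For instance, over $R=k[x,y]$ take $F=C=R$, $c$ multiplication by $x$ and $e$ multiplication by $y$: then $C$ is self-Rickart and $\ker(e)=0$ is a section, yet ${\rm Ker}\left[\begin{smallmatrix}x&y\end{smallmatrix}\right]\cong R$ is not a direct summand of $R^2$, since the quotient is the non-projective ideal $(x,y)$. What rescues the theorem is precisely the hypothesis you never invoke at this point: full invariance of $F$ forces the corner entry to vanish ($gi$ factors through $i$, so $c=0$), and only then does one literally get $\ker(dg)=F\oplus\ker(e)$, which is a section. This observation is the very first step of the paper's proof of (ii) $\Rightarrow$ (i); without it your sketch does not go through, with it the rest of your argument is fine.

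Two smaller comments. First, your handling of the strong case of (ii) $\Rightarrow$ (i) via Theorem \ref{t:rel} (a summand of $M$ containing $F$ has the form $F\oplus D$ with $D$ a summand of $C$, and $D$ is fully invariant in $C$ since ${\rm End}_{\A}(C)$ is abelian for strongly self-Rickart $C$) is a legitimate alternative to the paper's direct check that $\left[\begin{smallmatrix}1&0\\0&k\end{smallmatrix}\right]$ is a fully invariant kernel; it is a bit slicker but leans on \cite[Proposition~2.14]{CO1}, which the paper itself only uses later, in Theorem \ref{t:endab}. Second, your worry about identifying the given fully invariant sequence with the canonical split one $0\to F\to F\oplus C\to C\to 0$ is easily discharged (the cokernel of a section is, up to isomorphism, the complementary projection), and the paper handles it implicitly in the same way.
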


\begin{proof}
(1) (i) $\Rightarrow$ (ii) Assume that $M$ is (strongly) self-$F$-split.
Then the morphism $j:P\to M$ from the following pullback square is a (fully invariant) section:
$$\SelectTips{cm}{}
\xymatrix{
P \ar[r]^j \ar[d]_f & M \ar[d]^{1_M} \\
F \ar[r]_i & M }$$
Since $f$ must be an isomorphism, it follows that $i$ is a section. Hence $M\cong F\oplus C$.

Let $g:C\to C$ be a morphism in $\A$ with kernel $k:K\to C$.
Consider the following commutative square:
$$\SelectTips{cm}{}
\xymatrix{
F\oplus K \ar[r]^-{\left[\begin{smallmatrix} 1&0\\0&k \end{smallmatrix}\right]}
\ar[d]_{\left[\begin{smallmatrix} 1&0 \end{smallmatrix}\right]} &
F\oplus C \ar[d]^{\left[\begin{smallmatrix} 1&0\\0&g \end{smallmatrix}\right]}\\
F \ar[r]_-{\left[\begin{smallmatrix} 1\\0 \end{smallmatrix}\right]} & F\oplus C }$$
We claim that it is a pullback square. To this end, let $\alpha:Z\to F$ and
${\left[\begin{smallmatrix} \beta_1\\ \beta_2 \end{smallmatrix}\right]}:Z\to F\oplus C$ be morphisms in $\A$
such that ${\left[\begin{smallmatrix} 1&0\\0&g \end{smallmatrix}\right]}
{\left[\begin{smallmatrix} \beta_1\\ \beta_2 \end{smallmatrix}\right]}=
{\left[\begin{smallmatrix} 1\\0 \end{smallmatrix}\right]}\alpha$,
that is, $\beta_1=\alpha$ and $g\beta_2=0$. Then there exists a unique morphism $\gamma:Z\to K$ such that $\beta_2=k\gamma$.
Consider the morphism ${\left[\begin{smallmatrix} \beta_1 \\ \gamma \end{smallmatrix}\right]}:Z\to F\oplus K$.
Then $\left[\begin{smallmatrix} 1&0 \end{smallmatrix}\right]\left[\begin{smallmatrix} \beta_1 \\ \gamma \end{smallmatrix}\right]=
\alpha$ and $\left[\begin{smallmatrix} 1&0\\0&k \end{smallmatrix}\right]
\left[\begin{smallmatrix} \beta_1 \\ \gamma \end{smallmatrix}\right]=
\left[\begin{smallmatrix} \beta_1\\ \beta_2 \end{smallmatrix}\right]$.
If $\left[\begin{smallmatrix} \gamma_1\\ \gamma_2 \end{smallmatrix}\right]:Z\to F\oplus K$ is another morphism
such that $\left[\begin{smallmatrix} 1&0 \end{smallmatrix}\right]
\left[\begin{smallmatrix} \gamma_1 \\ \gamma_2 \end{smallmatrix}\right]=
\alpha$ and $\left[\begin{smallmatrix} 1&0\\0&k \end{smallmatrix}\right]
\left[\begin{smallmatrix} \gamma_1 \\ \gamma_2 \end{smallmatrix}\right]=
\left[\begin{smallmatrix} \beta_1\\ \beta_2 \end{smallmatrix}\right]$,
then $\gamma_1=\alpha=\beta_1$ and $k\gamma_2=\beta_2=k\gamma$.
This implies that $\gamma_2=\gamma$, because $k$ is a monomorphism, and so
$\left[\begin{smallmatrix} \gamma_1 \\ \gamma_2 \end{smallmatrix}\right]=
\left[\begin{smallmatrix} \beta_1 \\ \gamma \end{smallmatrix}\right]$.
Hence the square is a pullback.

Since $M\cong F\oplus C$ is (strongly) self-$F$-split,
$\left[\begin{smallmatrix} 1&0\\0&k \end{smallmatrix}\right]$ is a
(fully invariant) section. It follows that $k$ is a (fully invariant) section (by Proposition \ref{p:fimatrix}).
Hence $C$ is (strongly) self-Rickart.

(ii)$\Rightarrow$ (i) Assume that $M\cong F\oplus C$ and $C$ is (strongly) self-Rickart.
Let $f=\left[\begin{smallmatrix} a&b\\c&d \end{smallmatrix}\right]:F\oplus C\to F\oplus C$ be a morphism.
Since $c:F\to C$ and $F$ is a fully invariant subobject of $M\cong F\oplus C$, it follows that $c=0$, hence
$f=\left[\begin{smallmatrix} a&b\\0&d \end{smallmatrix}\right]$. Denote $k={\rm ker}(d):K\to C$.
We have $\left[\begin{smallmatrix} a&b\\0&d \end{smallmatrix}\right]
\left[\begin{smallmatrix} 1&0\\0&k \end{smallmatrix}\right]=
\left[\begin{smallmatrix} 1\\0 \end{smallmatrix}\right]\left[\begin{smallmatrix} a&bk \end{smallmatrix}\right]$,
hence the following square is commutative:
$$\SelectTips{cm}{}
\xymatrix{
F\oplus K \ar[r]^-{\left[\begin{smallmatrix} 1&0\\0&k \end{smallmatrix}\right]}
\ar[d]_{\left[\begin{smallmatrix} a&bk \end{smallmatrix}\right]} &
F\oplus C \ar[d]^{\left[\begin{smallmatrix} a&b\\0&d \end{smallmatrix}\right]}\\
F \ar[r]_-{\left[\begin{smallmatrix} 1\\0 \end{smallmatrix}\right]} & F\oplus C }$$
We claim that it is a pullback square. To this end, let $\alpha:Z\to F$ and
${\left[\begin{smallmatrix} \beta_1\\ \beta_2 \end{smallmatrix}\right]}:Z\to F\oplus C$ be morphisms in $\A$
such that ${\left[\begin{smallmatrix} a&b\\0&d \end{smallmatrix}\right]}
{\left[\begin{smallmatrix} \beta_1\\ \beta_2 \end{smallmatrix}\right]}=
{\left[\begin{smallmatrix} 1\\0 \end{smallmatrix}\right]}\alpha$,
that is, $a\beta_1+b\beta_2=\alpha$ and $d\beta_2=0$. Then there exists a unique morphism
$\gamma:Z\to K$ such that $\beta_2=k\gamma$.
Consider the morphism ${\left[\begin{smallmatrix} \beta_1 \\ \gamma \end{smallmatrix}\right]}:Z\to F\oplus K$.
Then $\left[\begin{smallmatrix} a&bk \end{smallmatrix}\right]\left[\begin{smallmatrix} \beta_1 \\ \gamma \end{smallmatrix}\right]=
a\beta_1+bk\gamma=a\beta_1+b\beta_2=\alpha$ and $\left[\begin{smallmatrix} 1&0\\0&k \end{smallmatrix}\right]
\left[\begin{smallmatrix} \beta_1 \\ \gamma \end{smallmatrix}\right]=
\left[\begin{smallmatrix} \beta_1\\ k\gamma \end{smallmatrix}\right]=
\left[\begin{smallmatrix} \beta_1\\ \beta_2 \end{smallmatrix}\right]$.
If $\left[\begin{smallmatrix} \gamma_1\\ \gamma_2 \end{smallmatrix}\right]:Z\to F\oplus K$ is another morphism
such that $\left[\begin{smallmatrix} a&bk \end{smallmatrix}\right]
\left[\begin{smallmatrix} \gamma_1 \\ \gamma_2 \end{smallmatrix}\right]=
\alpha$ and $\left[\begin{smallmatrix} 1&0\\0&k \end{smallmatrix}\right]
\left[\begin{smallmatrix} \gamma_1 \\ \gamma_2 \end{smallmatrix}\right]=
\left[\begin{smallmatrix} \beta_1\\ \beta_2 \end{smallmatrix}\right]$,
then $\gamma_1=\beta_1$ and $k\gamma_2=\beta_2=k\gamma$.
This implies that $\gamma_2=\gamma$, because $k$ is a monomorphism, and so
$\left[\begin{smallmatrix} \gamma_1 \\ \gamma_2 \end{smallmatrix}\right]=
\left[\begin{smallmatrix} \beta_1 \\ \gamma \end{smallmatrix}\right]$.
This shows that the square is a pullback.

Since $C$ is (strongly) self-Rickart, $k$ is a (fully invariant) section. It follows that
$\left[\begin{smallmatrix} 1&0\\0&k \end{smallmatrix}\right]$ is a section.
Hence $M\cong F\oplus C$ is self-$F$-split.

If $k$ is fully invariant, then we claim that
$\left[\begin{smallmatrix} 1&0\\0&k \end{smallmatrix}\right]$ is also fully invariant.
To this end, let $\left[\begin{smallmatrix} h_1&h_2\\h_3&h_4 \end{smallmatrix}\right]:F\oplus C\to F\oplus C$ be a morphism.
As above, we must have $h_3=0$. For the morphism $h_4:C\to C$, there exists a morphism $\alpha:K\to K$ such that
$h_4k=k\alpha$. Consider the morphism $\left[\begin{smallmatrix} h_1&h_2k\\0&\alpha \end{smallmatrix}\right]:F\oplus K\to F\oplus K$.
Then $\left[\begin{smallmatrix} h_1&h_2\\0&h_4 \end{smallmatrix}\right]\left[\begin{smallmatrix} 1&0\\0&k \end{smallmatrix}\right]=
\left[\begin{smallmatrix} 1&0\\0&k \end{smallmatrix}\right]\left[\begin{smallmatrix} h_1&h_2k\\0&\alpha \end{smallmatrix}\right]$,
which shows that $\left[\begin{smallmatrix} 1&0\\0&k \end{smallmatrix}\right]$ is fully invariant.
Hence, if $C$ is strongly self-Rickart, then $M\cong F\oplus C$ is strongly self-$F$-split.
\end{proof}

\begin{cor} Let $M$ be an object of an abelian category $\A$. Then:
\begin{enumerate}
\item $M$ is (strongly) self-Rickart if and only if 
for every split fully invariant short exact sequence $0\to F\to M\to C\to 0$, $M$ is
(strongly) self-$F$-split.
\item $M$ is dual (strongly) self-Rickart if and only if 
for every split fully invariant short exact sequence $0\to F\to M\to C\to 0$, $M$
is dual (strongly) self-$F$-split.
\end{enumerate}
\end{cor}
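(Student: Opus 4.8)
The plan is to deduce the corollary directly from Theorem~\ref{t:key} together with Remark~\ref{r:Rickart}. Since the statement is an ``if and only if'' whose forward and backward implications are quantified over all split fully invariant short exact sequences, the argument is essentially a bookkeeping exercise: I want to show that the family of conditions ``$M$ is (strongly) self-$F$-split for every split fully invariant $0\to F\to M\to C\to 0$'' collapses, via Theorem~\ref{t:key}(1), to ``$C$ is (strongly) self-Rickart for every such decomposition $M\cong F\oplus C$,'' and then to observe that this is equivalent to $M$ itself being (strongly) self-Rickart.

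First I would prove the forward implication of (1). Assume $M$ is (strongly) self-Rickart. Let $0\to F\stackrel{i}\to M\stackrel{d}\to C\to 0$ be any split fully invariant short exact sequence; since it splits, $M\cong F\oplus C$. The point now is that a direct summand of a (strongly) self-Rickart object is again (strongly) self-Rickart. For the non-strong case this is standard (a summand of a self-Rickart object is self-Rickart, cf.\ the theory in \cite{C-K,CO1}); for the strong case one uses that strong self-Rickartness passes to direct summands as well. Hence $C$, being a direct summand of $M$, is (strongly) self-Rickart, and Theorem~\ref{t:key}(1) (ii)$\Rightarrow$(i) gives that $M$ is (strongly) self-$F$-split. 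For the converse, suppose $M$ is (strongly) self-$F$-split for \emph{every} split fully invariant short exact sequence $0\to F\to M\to C\to 0$. Apply this to the particular sequence $0\to 0\to M\to M\to 0$, which is trivially split and fully invariant with $F=0$: then $M$ is (strongly) self-$0$-split, which by Remark~\ref{r:Rickart} means exactly that $M$ is (strongly) self-Rickart. Part (2) follows by the duality principle in abelian categories, using Theorem~\ref{t:key}(2) and the fact that $N$ is (strongly) dual self-Rickart iff $N$ is dual (strongly) self-$N$-split (Remark~\ref{r:Rickart}); dually one applies the hypothesis to $0\to M\to M\to 0\to 0$.

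The main obstacle I anticipate is the forward direction of (1): one genuinely needs that direct summands of (strongly) self-Rickart objects inherit the property, and in the strong case this requires a small argument — if $M=C\oplus C'$ and $g:C\to C$ is a morphism, extend it to $\tilde g=g\oplus 1_{C'}$ on $M$, note $\ker(\tilde g)\cong\ker(g)$ as a summand, use that $\ker(\tilde g)$ is a fully invariant section in $M$, and then descend fully invariant sectionhood to $C$ via Proposition~\ref{p:fimatrix}. All of this is routine given the machinery already assembled (Proposition~\ref{p:fimatrix}, Lemma~\ref{l:trans}), so the corollary is really a direct consequence of Theorem~\ref{t:key}.

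\begin{proof}
(1) Assume first that $M$ is (strongly) self-Rickart, and let $0\to F\to M\to C\to 0$ be a split fully invariant short exact sequence in $\A$, so that $M\cong F\oplus C$. Since $C$ is a direct summand of $M$, it is (strongly) self-Rickart: indeed, if $M\cong C\oplus C'$ and $g:C\to C$ is a morphism, then applying (strong) self-Rickartness of $M$ to $g\oplus 1_{C'}$ shows that ${\rm ker}(g\oplus 1_{C'})={\rm ker}(g)\oplus 0$ is a (fully invariant) section in $M$, whence ${\rm ker}(g)$ is a (fully invariant) section in $C$ by Proposition~\ref{p:fimatrix} and Lemma~\ref{l:trans}. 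By Theorem~\ref{t:key}(1), $M$ is (strongly) self-$F$-split. Conversely, assume that $M$ is (strongly) self-$F$-split for every split fully invariant short exact sequence $0\to F\to M\to C\to 0$ in $\A$. Applying this to the split fully invariant short exact sequence $0\to 0\to M\stackrel{1_M}\to M\to 0$ (here $F=0$), we get that $M$ is (strongly) self-$0$-split, that is, (strongly) self-Rickart by Remark~\ref{r:Rickart}.
\end{proof}
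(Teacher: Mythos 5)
Your proof is correct and follows essentially the same route as the paper: the forward direction combines Theorem~\ref{t:key} with the fact that direct summands inherit (strong) self-Rickartness, and the converse specializes to the split fully invariant sequence with $F=0$, using Remark~\ref{r:Rickart}. The only difference is cosmetic: where the paper simply cites \cite[Corollary~2.11]{C-K} and \cite[Corollary~2.18]{CO1} for the summand-inheritance step, you prove it inline via $g\oplus 1_{C'}$ and Proposition~\ref{p:fimatrix}, which is a valid substitute.
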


\begin{proof} (1) For the direct implication, let $0\to F\to M\to C\to 0$ be a
split fully invariant short exact sequence. Then $M\cong F\oplus C$.
By \cite[Corollary~2.11]{C-K} (\cite[Corollary~2.18]{CO1}), $C$ is (strongly) self-Rickart.
Hence $M$ is (strongly) self-$F$-split by Theorem \ref{t:key}.

Conversely, for $F=0$, $M$ is (strongly) self-$0$-split, that is, $M$ is (strongly) self-Rickart.
\end{proof}

\begin{exam} \rm Consider the abelian group $G=\mathbb{Z}_{p^2}\oplus \mathbb{Z}_q$, where $p$ and $q$ are distinct primes
and we denote $\mathbb{Z}_n=\mathbb{Z}/n\mathbb{Z}$ for every natural number $n$. The subgroups of $G$ are 
$0$, $G$, $H_1$, $H_2$, $H_3$ and $H_4$, where the last 4 subgroups have orders $q$, $p$, $pq$ and $p^2$ respectively. 
Note that all subgroups of $G$ are fully invariant, while $0$, $G$, $H_1$ and $H_4$ are direct summands of $G$. 
Clearly, $G$ is self-$G$-split. But $G$ is not self-$0$-split, i.e., self-Rickart, because its direct summand 
$H_4\cong \mathbb{Z}_{p^2}$ is not self-Rickart \cite[Theorem~5.6]{LRR10}. 
By Theorem \ref{t:key}, $G$ cannot be either self-$H_2$-split or self-$H_3$-split. 
Again by Theorem \ref{t:key}, $G\cong H_1\oplus H_4$ is not self-$H_1$-split, because $H_4$ is not self-Rickart, 
while $G\cong H_1\oplus H_4$ is strongly self-$H_4$-split, because $H_1\cong \mathbb{Z}_q$ is strongly self-Rickart 
\cite[Corollary~3.9]{CO1}. Hence $G$ is strongly self-$F$-split for $F\in \{H_4,G\}$ and not self-$F$-split for 
$F\in \{0,H_1,H_2,H_3\}$. Similarly, $G$ is dual strongly self-$F$-split for $F\in \{H_1,G\}$ and 
not dual self-$F$-split for $F\in \{0,H_2,H_3,H_4\}$. This also shows that there are abelian groups 
which are self-$F$-split but not dual self-$F$-split, and viceversa.
\end{exam}

\begin{exam} \rm Consider the abelian group $G=\mathbb{Z}_p\oplus \mathbb{Z}_p\oplus \mathbb{Z}\oplus \mathbb{Q}$ 
for some prime $p$, and its subgroup $F\cong \mathbb{Z}_p\oplus \mathbb{Z}_p$. By \cite[Lemma~1.9]{OHS}, 
$F$ is a fully invariant subgroup of $G$, because 
$\Hom_{\mathbb{Z}}(\mathbb{Z}_p\oplus \mathbb{Z}_p,\mathbb{Z}\oplus \mathbb{Q})=0$. 
By Theorem \ref{t:key}, $G\cong F\oplus \mathbb{Z}\oplus \mathbb{Q}$ is self-$F$-split, because 
$\mathbb{Z}\oplus \mathbb{Q}$ is self-Rickart \cite[Example~2.15]{LRR12}. On the other hand, 
$G$ is not strongly self-$F$-split, because $\mathbb{Z}\oplus \mathbb{Q}$ is not strongly self-Rickart
\cite[Theorem~3.6]{CO1}. By Theorem \ref{t:key}, $G\cong F\oplus \mathbb{Z}\oplus \mathbb{Q}$ is dual self-$F$-split, 
because $F$ is clearly dual self-Rickart being semisimple. On the other hand, 
$G$ is not dual strongly self-$F$-split, because $F$ is not dual strongly self-Rickart \cite[Theorem~3.6]{CO1}.
\end{exam}

Let $\A$ be an abelian category and let $r$ be a preradical of $\A$. 
Recall that $r$ is called \emph{idempotent} if $rr=r$, and \emph{radical} if $rr^{-1}=0$. 
Note that every hereditary preradical is idempotent, and every cohereditary preradical is a radical (e.g., see \cite{BKN}).

\begin{exam} \rm For every abelian group $G$, denote by $t(G)$ the set of elements of $G$ having finite order (torsion elements), 
and by $d(G)$ the sum of its divisible (injective) subgroups. 
Then $t$ is a hereditary radical and $d$ is an idempotent radical of the category Ab of abelian groups.
Let $G$ be a finitely generated abelian group. Then there are two classical direct sum decompositions 
$G=t(G)\oplus F=d(G)\oplus R$ for some subgroups $F$ (torsionfree) and $R$ (reduced) of $G$. 
Note that $F\cong \mathbb{Z}^n$ for some $n\in \mathbb{N}$, hence $F$ is projective, 
while $d(G)$ is injective. It follows that $G$ is self-$t(G)$-split and dual self-$d(G)$-split 
by Theorem \ref{t:key} and \cite[Corollary~4.8]{C-K}. By \cite[Corollary~3.9]{CO1}, 
$G$ is strongly self-$t(G)$-split if and only if $F\cong \mathbb{Z}$, 
while $G$ is strongly dual self-$d(G)$-split if and only if $R\cong \mathbb{Z}_{p^{\infty}}$ 
(the Pr\"ufer $p$-group) for some prime $p$.  
\end{exam}

\begin{exam} \rm Consider the ring $R=\left[\begin{smallmatrix} \mathbb{Z} & \mathbb{Z} \\ 0 & \mathbb{Z} \end{smallmatrix}\right]$
and $M=R_R$. Then $F=\left[\begin{smallmatrix} \mathbb{Z}&\mathbb{Z} \\ 0&0\end{smallmatrix}\right]$ is an ideal of $R$, 
hence it is a fully invariant submodule of $M$. Moreover, we have $M=F\oplus C$, where 
$C=\left[\begin{smallmatrix} 0&0 \\ 0&\mathbb{Z} \end{smallmatrix}\right]$. 
Since $C\cong \mathbb{Z}$ is strongly self-Rickart \cite[Corollary~3.9]{CO1}, 
$M$ is strongly self-$F$-split by Theorem \ref{t:key}. Note that $F$ is self-Rickart \cite[Example~1.2]{LRR12}. 
If $F$ were dual self-Rickart, then it would be self-regular, 
and so ${\rm End}_R(F)\cong \mathbb{Z}$ would be a von Neumann regular ring, contradiction. 
Hence $F$ is not dual self-Rickart, and so $M$ is not dual self-$F$-split by Theorem \ref{t:key}. 
\end{exam}

\begin{exam} \rm Consider the ring $R=\left[\begin{smallmatrix} K & K \\ 0 & K \end{smallmatrix}\right]$ for some field $K$ 
and $M=R_R$. Then $F=\left[\begin{smallmatrix} K&K \\ 0&0\end{smallmatrix}\right]$ is an ideal of $R$, 
hence it is a fully invariant submodule of $M$. Moreover, we have $M=F\oplus C$, where 
$C=\left[\begin{smallmatrix} 0&0 \\ 0&K \end{smallmatrix}\right]$. 
Since $C\cong K$ is strongly self-Rickart and the indecomposable $F$ is strongly dual self-Rickart by \cite[Example~3.9]{LRR11}, 
it follows that $M$ is both strongly self-$F$-split and dual strongly self-$F$-split by Theorem \ref{t:key}. 
\end{exam}

Now we may give another result relating strong self-$F$-splitness and self-$F$-splitness. 
First recall that a ring $R$ is called \emph{abelian} if every idempotent element of $R$ is central. 

\begin{theor} \label{t:endab} Let $\A$ be an abelian category. Let $0\to F\to M\to C\to 0$
be a fully invariant short exact sequence in $\A$.
\begin{enumerate}
\item $M$ is strongly self-$F$-split if and only if $M$ is self-$F$-split and ${\rm End}_{\mathcal{A}}(C)$ is abelian.
\item $M$ is dual strongly self-$F$-split if and only if $M$ is dual self-$F$-split and ${\rm End}_{\mathcal{A}}(F)$ is abelian.
\end{enumerate}
\end{theor}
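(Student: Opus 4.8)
The plan is to reduce everything to Theorem~\ref{t:key} and the known characterization of strong self-Rickartness via abelian endomorphism rings, and then invoke the duality principle for part~(2). By Theorem~\ref{t:key}, $M$ is self-$F$-split if and only if $M\cong F\oplus C$ with $C$ self-Rickart, and $M$ is strongly self-$F$-split if and only if $M\cong F\oplus C$ with $C$ strongly self-Rickart. So once we know that $M$ is self-$F$-split (equivalently $M\cong F\oplus C$ and $C$ is self-Rickart), the statement $M$ is strongly self-$F$-split becomes equivalent to $C$ being strongly self-Rickart. It therefore suffices to prove the following: for a self-Rickart object $C$ of an abelian category, $C$ is strongly self-Rickart if and only if $\End_{\mathcal A}(C)$ is abelian. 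This is exactly the categorical analogue of \cite[Theorem~3.2]{CO1} (or the module-theoretic statement in \cite{Ungor}), and I would cite it directly if available; otherwise I would include a short argument.

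For the self-contained argument I would argue as follows. Suppose $C$ is self-Rickart. Recall that $C$ is strongly self-Rickart means that for every $f\in\End_{\mathcal A}(C)$, $\ker(f)$ is a \emph{fully invariant} section. Given an idempotent $e=e^2\in\End_{\mathcal A}(C)$, the morphism $1-e$ has $\ker(1-e)=\im(e)$ and $\Coker(1-e)\cong\im(e)$ as well; in particular $\im(e)$ is a direct summand and the associated kernel $\ker(1-e):\im(e)\to C$ is a section whose image is $\im(e)$. If $C$ is strongly self-Rickart, this section is fully invariant, which by the module-style computation in Example following Definition~2.2 (adapted categorically: $h\cdot\ker(1-e)$ factors through $\ker(1-e)$ for every $h\in\End_{\mathcal A}(C)$) forces $\im(e)$ to be invariant under every endomorphism; applying this to both $e$ and $1-e$ and using that the two complementary summands are both invariant, one deduces $he=eh$ for all $h$, i.e. $e$ is central. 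Hence $\End_{\mathcal A}(C)$ is abelian. Conversely, if $\End_{\mathcal A}(C)$ is abelian and $C$ is self-Rickart, then for $f\in\End_{\mathcal A}(C)$ the kernel $\ker(f)$ is a section, so it is the inclusion of a direct summand $K$, corresponding to an idempotent $e$ with $\im(e)=K$. Since every idempotent is central, $K$ is invariant under every endomorphism of $C$, i.e. $\ker(f)$ is a fully invariant section. Thus $C$ is strongly self-Rickart.

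Assembling: $M$ is strongly self-$F$-split $\iff$ ($M\cong F\oplus C$ and $C$ is strongly self-Rickart) $\iff$ ($M\cong F\oplus C$, $C$ is self-Rickart, and $\End_{\mathcal A}(C)$ is abelian) $\iff$ ($M$ is self-$F$-split and $\End_{\mathcal A}(C)$ is abelian), where the first and last equivalences are Theorem~\ref{t:key}(1) and the middle one is the endomorphism-ring criterion just discussed. This proves~(1). For~(2), I would simply note that it is the statement obtained from~(1) by passing to the opposite category $\mathcal A^{\op}$: the fully invariant short exact sequence $0\to F\to M\to C\to 0$ in $\mathcal A$ becomes one of the same shape in $\mathcal A^{\op}$ with the roles of $F$ and $C$ interchanged, dual self-$F$-splitness in $\mathcal A$ corresponds to self-$F$-splitness in $\mathcal A^{\op}$, and $\End_{\mathcal A}(F)=\End_{\mathcal A^{\op}}(F)^{\op}$ is abelian iff $\End_{\mathcal A^{\op}}(F)$ is abelian since a ring is abelian iff its opposite is; then apply~(1) in $\mathcal A^{\op}$.

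The main obstacle is the middle equivalence, i.e. proving cleanly in the categorical setting that a self-Rickart object is strongly self-Rickart exactly when its endomorphism ring is abelian; the subtlety is translating between fully invariant sections and central idempotents without coordinates. The cleanest route is to observe that a section $j:K\to C$ with retraction $r$ (so $jr=e$ idempotent, $\im(j)=\im(e)$) is fully invariant precisely when for every $h\in\End_{\mathcal A}(C)$ one has $hj=j\alpha$ for some $\alpha$, equivalently $e h e = h e$, i.e. $(1-e)he=0$; running this for all $h$ and also with $e$ replaced by $1-e$ yields $eh=he$. This bookkeeping is the only real computation, and I expect it to go through smoothly once the identification $\ker(f)=\im(1-e)$-type correspondences are set up via the universal properties of kernel and image in an abelian category.
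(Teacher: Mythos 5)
Your proof is correct and follows essentially the same route as the paper: both reduce the statement via Theorem~\ref{t:key} to the fact that a self-Rickart object is strongly self-Rickart exactly when its endomorphism ring is abelian, which the paper simply cites as \cite[Proposition~2.14]{CO1} and which your idempotent computation ($(1-e)he=0$ together with $eh(1-e)=0$ forces $eh=he$, and centrality of the idempotent associated to $\ker(f)$ gives full invariance) correctly reproves, and both obtain part~(2) by passing to the opposite category. No gaps.
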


\begin{proof} (1) Assume first that $M$ is strongly self-$F$-split. Then $M$ is self-$F$-split. 
Also, by Theorem \ref{t:key}, $M\cong F\oplus C$ and $C$ is strongly self-Rickart. 
Then ${\rm End}_{\mathcal{A}}(C)$ is abelian by \cite[Proposition~2.14]{CO1}.

Conversely, assume that $M$ is self-$F$-split and ${\rm End}_{\mathcal{A}}(C)$ is abelian. 
Then $M\cong F\oplus C$ and $C$ is self-Rickart by Theorem \ref{t:key}. It follows that 
$C$ is strongly self-Rickart by \cite[Proposition~2.14]{CO1}. Hence $M$ is strongly self-$F$-split 
by Theorem \ref{t:key}. 
\end{proof}

\begin{exam} \rm Let $A=\prod_{n=1}^{\infty}\mathbb{Z}_2$, $T=\{(a_n)_{n=1}^{\infty}\in A\mid a_n \textrm{ is eventually
constant}\}$ and $I=\{(a_n)_{n=1}^{\infty}\in A\mid a_n \textrm{ is eventually
zero}\}=\bigoplus_{n=1}^{\infty}\mathbb{Z}_2$. Consider the ring 
$R=\left[\begin{smallmatrix} T&T/I \\ 0&T/I\end{smallmatrix}\right]$ and $M=R_R$. 
Then $F=\left[\begin{smallmatrix} T&T/I \\ 0&0\end{smallmatrix}\right]$ is an ideal of $R$,
hence it is a fully invariant submodule of $M$. Moreover, we have $M=F\oplus C$, where 
$C=\left[\begin{smallmatrix} 0&0 \\ 0&T/I \end{smallmatrix}\right]$. Since $C$ is projective, 
$C$ is self-Rickart by \cite[Theorem~4.7]{C-K}, and so $M$ is self-$F$-split by Theorem \ref{t:key}. 
Also, $F$ is dual self-Rickart by \cite[Example~4.1]{LRR11},
and so $M$ is dual self-$F$-split by Theorem \ref{t:key}. Since ${\rm End}_R(C)$ and 
${\rm End}_R(F)=\left[\begin{smallmatrix} T&T/I \\ 0&T/I\end{smallmatrix}\right]$ are commutative, it follows that 
$M$ is strongly self-$F$-split and dual strongly self-$F$-split by Theorem \ref{t:endab}.
\end{exam}

The following theorem generalizes \cite[Theorem~2.9]{Ungor}.

\begin{theor} Let $\A$ be an abelian category. Let $0\to F\stackrel{i}\to M\stackrel{d}\to C\to 0$
be a fully invariant short exact sequence in $\A$.
\begin{enumerate}
\item  The following are equivalent:
\begin{enumerate}[(i)]
\item $M$ is (strongly) self-$F$-split and for every pullback square
$$\SelectTips{cm}{}
\xymatrix{
P \ar[r]^j \ar[d]_f & M \ar[d]^g \\
F \ar[r]_i & M }$$
the unique morphism $l:{\rm Ker}(g)\to P$ is a (fully invariant) section.

\item $M$ is (strongly) self-Rickart and $M\cong F\oplus C$.
\end{enumerate}
\item The following are equivalent:
\begin{enumerate}[(i)]
\item $M$ is dual (strongly) $F$-split and for every pushout square
$$\SelectTips{cm}{}
\xymatrix{
M \ar[r]^d \ar[d]_g & C \ar[d]^h \\
M \ar[r]_p & Q }$$
the unique morphism $q:Q\to {\rm Coker}(g)$ is a (fully coinvariant) retraction.
\item $M$ is dual (strongly) self-Rickart and $M\cong F\oplus C$.
\end{enumerate}
\end{enumerate}
\end{theor}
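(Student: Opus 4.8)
The plan is to prove (1), after which (2) follows by the duality principle. The key observation is that the extra condition in (i) — that for every pullback square over $i$ and an endomorphism $g$ of $M$, the canonical morphism $l:\Ker(g)\to P$ is a (fully invariant) section — is precisely what is needed to upgrade ``$M$ is self-$F$-split'' (which controls $P=g^{-1}(F)=\Ker(dg)$) to ``$M$ is self-Rickart'' (which requires $\Ker(g)$ itself to be a section). First I would recall that $l$ arises because $g\ker(g)=0$ factors through $i$ via the zero morphism, so the pullback property of $PMFN$ yields a unique $l:\Ker(g)\to P$ with $jl=\ker(g)$ and $fl=0$; moreover $l=\ker(f)$, since $f$ fits into the exact sequence obtained by pulling back $0\to F\to M\to C\to 0$ along $g$ (so $\Ker(f)\cong\Ker(\text{pullback of }d\text{ along }g)=\Ker(dg)$... more precisely $l$ identifies $\Ker(g)$ with the kernel of $f$). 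This description will be used in both directions.

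For (i) $\Rightarrow$ (ii): assuming $M$ is (strongly) self-$F$-split, Theorem~\ref{t:key} already gives $M\cong F\oplus C$ and $C$ (strongly) self-Rickart. To get that $M$ itself is (strongly) self-Rickart, take any endomorphism $g:M\to M$ and form the pullback $PMFM$ of $i$ and $g$. By self-$F$-splitness $j:P\to M$ is a section, and by hypothesis $l:\Ker(g)\to P$ is a section; composing, $\ker(g)=jl$ is a section. For the strong case one invokes Lemma~\ref{l:trans} (composition of fully invariant kernels is fully invariant) together with the fact that a fully invariant section composed appropriately stays fully invariant — so $\ker(g)$ is a fully invariant section, i.e.\ $M$ is strongly self-Rickart.

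For (ii) $\Rightarrow$ (i): assume $M$ is (strongly) self-Rickart and $M\cong F\oplus C$. Then $M$ is (strongly) self-$0$-split by Remark~\ref{r:Rickart}, and since $i:F\to M$ is a split monomorphism, Theorem~\ref{t:key} (or a direct argument) gives that $M$ is (strongly) self-$F$-split — indeed $C\cong M/F$ is a direct summand of $M$, hence (strongly) self-Rickart as a summand, so Theorem~\ref{t:key} applies. It remains to check the $l$-condition. Given a pullback square $PMFM$ over $i$ and $g$, write $P=g^{-1}(F)$; since $\ker(g):\Ker(g)\to M$ is a (fully invariant) section because $M$ is (strongly) self-Rickart, and $\ker(g)$ factors as $jl$ with $j$ a section (in fact $j=\ker(dg)$ and $f$ is the induced map $P\to F$), I would show $l$ is a section by exhibiting a retraction: from a retraction $r$ of $\ker(g)$ and the splitting of $j$, build a retraction of $l$ using that $l=\ker(f)$ and that $f$ itself is a retraction onto $F$ (here one uses $M\cong F\oplus C$, so the pulled-back sequence $0\to\Ker(g)\to P\xrightarrow{f}F\to 0$ splits). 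For the strong version, use Proposition~\ref{p:POPB} (a fully invariant section pulled back appropriately, resp.\ the dual of the cokernel statement) to transfer full invariance from $\ker(g)$ to $l$.

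The main obstacle I anticipate is the (ii) $\Rightarrow$ (i) direction, specifically verifying that $l$ is \emph{fully invariant} in the strong case: one must show that for every endomorphism $h$ of $P$, $lh'$ factors through $l$ for a suitable $h'$, and the natural route is to push $h$ down to an endomorphism of $M$ (via the section $j$ and its retraction, composed with $g$) and then use strong self-Rickartness of $M$ to control $\Ker(g)=\Im(l)$; making this compatible with the pullback structure — i.e.\ checking the factorization lands correctly — is the delicate bookkeeping, and I would handle it with a diagram analogous to the one in the proof of Theorem~\ref{t:epimono}, invoking Proposition~\ref{p:POPB} at the key point. The (strongly) qualifiers throughout should be carried in parentheses exactly as in Theorem~\ref{t:key}, proving the plain and strong statements in parallel.
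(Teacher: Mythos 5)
Your proposal is correct and follows essentially the same route as the paper's proof: factor $\ker(g)=jl$ through the pullback, combine Theorem \ref{t:key} with closure of (strong) self-Rickartness under direct summands, use Lemma \ref{l:trans} for (i)$\Rightarrow$(ii), and use Proposition \ref{p:POPB}(2) for (ii)$\Rightarrow$(i). The obstacle you anticipate in the strong case is not actually one: since $j=\ker(dg)$ is a kernel and $jl=\ker(g)$ is a fully invariant section, Proposition \ref{p:POPB}(2) applies directly and no further diagram chase is needed.
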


\begin{proof} (1) (i) $\Rightarrow$ (ii) Assume that (i) holds. Let $g:M\to M$ be a morphism in $\A$ with kernel $k:\Ker(g)\to M$.
Consider the pullback of $g$ and $i$ as in the above diagram.
The pullback property yields a unique morphism $l:\Ker(g)\to P$ such that $k=jl$.
Since $M$ is (strongly) self-$F$-split, $j:P\to M$ is a (fully invariant) section. It follows that $k=jl$ is a section.
If $j$ and $l$ are fully invariant sections, then so is $k=jl$ by Lemma \ref{l:trans}.
Hence $M$ is (strongly) self-Rickart. Also, $M\cong F\oplus C$ by Theorem \ref{t:key}.

(ii) $\Rightarrow$ (i) Assume that (ii) holds. Consider a pullback square as above.
Denote by $k:\Ker(g)\to M$ the kernel of $g:M\to M$. The pullback property yields a unique morphism $l:\Ker(g)\to P$ such that $k=jl$.
Since $M\cong F\oplus C$ is (strongly) self-Rickart, so is $C$ by \cite[Corollary~2.11]{C-K} (\cite[Corollary~2.18]{CO1}).
Hence $M$ is (strongly) self-$F$-split by Theorem \ref{t:key}. Since $M$ is (strongly) self-Rickart,
$k=jl$ is a (fully invariant) section. It follows that $l:\Ker(g)\to P$ is a section.
If $k=jl$ is a fully invariant section, then so is $l$ by Proposition \ref{p:POPB}.
\end{proof}

The next result generalizes \cite[Proposition~2.16]{Ungor}, without the quasi-projectivity condition.

\begin{prop} Let $\A$ be an abelian category. Let $0\to F\stackrel{i}\to M\stackrel{d}\to C\to 0$
be a fully invariant short exact sequence in $\A$. Then:
\begin{enumerate}
\item $M$ is (strongly) self-$F$-split if and only if
for every subobject $K$ of $M$ with $K\subseteq F$, $M/K$ is (strongly) self-$(F/K)$-split.
\item $M$ is dual (strongly) self-$F$-split if and only if
for every subobject $K$ of $M$ with $F\subseteq K$, $K$ is dual (strongly) self-$F$-split.
\end{enumerate}
\end{prop}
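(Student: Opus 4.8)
The plan is to reduce both equivalences to Theorem~\ref{t:key}, which says that $M$ is (strongly) self-$F$-split exactly when $M\cong F\oplus C$ with $C$ (strongly) self-Rickart, and dually that $M$ is dual (strongly) self-$F$-split exactly when $M\cong F\oplus C$ with $F$ dual (strongly) self-Rickart. In each of the two parts the ``trivial'' direction is obtained by specializing the quantified $K$, and the substantive direction amounts to checking that the hypotheses of Theorem~\ref{t:key} are still in force after passing to $M/K$, respectively to $K$.

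For part~(1), I would first dispatch the backward implication by taking $K=0$: then $M/0=M$ is (strongly) self-$(F/0)$-split, i.e. (strongly) self-$F$-split. For the forward implication, assume $M$ is (strongly) self-$F$-split, so by Theorem~\ref{t:key} we may write $M\cong F\oplus C$ with $C$ (strongly) self-Rickart, and (as $F$ is fully invariant) $\Hom_{\A}(F,C)=0$. Fix a subobject $K$ of $M$ with $K\subseteq F$. Taking cokernels along $K\to F\to M$ and using that the monomorphism $F\to M$ becomes a coproduct injection under the isomorphism $M\cong F\oplus C$, one gets $M/K\cong (F/K)\oplus C$, with $F/K$ the first summand and $(M/K)/(F/K)\cong C$. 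Any morphism $F/K\to C$ composed with the epimorphism $F\to F/K$ is a morphism $F\to C$, hence $0$; thus $\Hom_{\A}(F/K,C)=0$, the monomorphism $F/K\to M/K$ is fully invariant, and $0\to F/K\to M/K\to C\to 0$ is a fully invariant short exact sequence. Now Theorem~\ref{t:key} applied to $M/K$, together with $M/K\cong (F/K)\oplus C$ and $C$ being (strongly) self-Rickart, yields that $M/K$ is (strongly) self-$(F/K)$-split.

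Part~(2) is handled dually. The backward implication follows by taking $K=M$. For the forward implication, assume $M$ is dual (strongly) self-$F$-split; by Theorem~\ref{t:key} we have $M\cong F\oplus C$ with $F$ dual (strongly) self-Rickart and $\Hom_{\A}(F,C)=0$. Fix $K$ with $F\subseteq K\subseteq M$. The modular law for subobjects in an abelian category gives $K\cong F\oplus C'$ with $C':=K\cap C$, hence $K/F\cong C'$; since $C'$ embeds in $C$ we get $\Hom_{\A}(F,C')=0$, so $F\to K$ is fully invariant and $0\to F\to K\to C'\to 0$ is a fully invariant short exact sequence. Theorem~\ref{t:key}(2) applied to $K$, together with $K\cong F\oplus C'$ and $F$ dual (strongly) self-Rickart, shows that $K$ is dual (strongly) self-$F$-split.

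I do not expect a genuine obstacle here once Theorem~\ref{t:key} is available; the only points demanding care are checking the fully invariance hypotheses so that Theorem~\ref{t:key} applies in the new setting --- which rests on the elementary observations that a quotient of $F$ still admits no nonzero morphism to $C$, and a subobject of $C$ admits no nonzero morphism from $F$ --- and the two internal direct-sum computations (cokernel of a split monomorphism, and the identity $K\cong F\oplus(K\cap C)$ for $F\subseteq K\subseteq F\oplus C$).
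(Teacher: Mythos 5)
Your proof is correct, and its skeleton coincides with the paper's: both directions are reductions to Theorem \ref{t:key}, with the trivial direction obtained by specializing $K$ (to $0$ in part (1), to $M$ in part (2)). Where you differ is in the one non-trivial verification, namely that the induced short exact sequence is again fully invariant. The paper obtains this from Proposition \ref{p:POPB}: since $d=d'g$ is a fully coinvariant retraction, so is the induced $d':M/K\to C$, whence $i':F/K\to M/K$ is a fully invariant section and $M/K\cong F/K\oplus C$. You instead observe that full invariance of $F$ in $M\cong F\oplus C$ forces $\Hom_{\A}(F,C)=0$, deduce $\Hom_{\A}(F/K,C)=0$ (resp.\ $\Hom_{\A}(F,K\cap C)=0$) by composing with the canonical epimorphism (resp.\ monomorphism), and use the elementary fact that a direct summand $A$ of $A\oplus B$ with $\Hom_{\A}(A,B)=0$ is a fully invariant kernel; all of these steps are valid in an abelian category. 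The trade-off is that the paper's route is a one-line appeal to an already-established lemma and dualizes automatically, while your Hom-vanishing argument is more hands-on and lets you prove part (2) directly --- via the modular law giving $K=F\oplus(K\cap C)$, or equivalently by restricting the projection $M\to F$ to $K$ --- instead of invoking the duality principle.
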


\begin{proof} (1) Assume first that $M$ is (strongly) self-$F$-split. By Theorem \ref{t:key}, $M\cong F\oplus C$ and
$C$ is (strongly) self-Rickart. Let $K$ be a subobject of $M$ with $K\subseteq F$.
One may construct the following commutative diagram
$$\SelectTips{cm}{}
\xymatrix{
0 \ar[r] & F \ar[r]^-{i} \ar[d]_f & M \ar[r]^-{d} \ar[d]^g & C \ar[r] \ar@{=}[d] & 0 \\
0 \ar[r] & F/K \ar[r]_-{i'} & M/K \ar[r]_-{d'} & C \ar[r] & 0 }$$
where the rows are short exact sequences and $f,g$ are the cokernels induced by the inclusions of $K$ into $F$ and $M$.
Since $d=d'g$ is a fully coinvariant retraction, so is $d'$ by Proposition \ref{p:POPB}.
Then $i':F/K\to M/K$ is a fully invariant section, and so $M/K\cong F/K\oplus C$.
Hence $M/K$ is (strongly) self-$(F/K)$-split by Theorem \ref{t:key}.

The converse is clear.
\end{proof}

\section{Coproducts of $F$-split objects}

In general arbitrary coproducts of relative split objects are not relative split objects, 
for instance, see \cite[Example~3.1]{Ungor}. Nevertheless, 
we may give the following theorem if we impose some extra conditions on the coproducts.   

\begin{theor} \label{t:homzero} Let $\A$ be an abelian category with coproducts,
and let $(M_k)_{k\in K}$ be a family of objects of $\A$. 
Let $0\to F_k\stackrel{i_k}\to M_k\stackrel{d_k}\to C_k\to 0$ be fully invariant 
short exact sequences in $\A$ for every $k\in K$. 
\begin{enumerate} \item 
\begin{enumerate}[(i)]
\item Assume that ${\rm Hom}_{\A}(M_k, M_l)=0$ for every $k,l\in K$ with $k\neq l$. 
Then $\bigoplus_{k\in K}M_k$ is self-$(\bigoplus_{k\in K}F_k)$-split if and only if 
$M_k$ is self-$F_k$-split for every $k\in K$.
\item Then $\bigoplus_{k\in K}M_k$ is strongly self-$(\bigoplus_{k\in K}F_k)$-split if and only if 
$M_k$ is strongly self-$F_k$-split for every $k\in K$ and ${\rm Hom}_{\A}(C_k, C_l)=0$ for every $k,l\in K$ with $k\neq l$.
\end{enumerate}
\item 
\begin{enumerate}[(i)]
\item Assume that ${\rm Hom}_{\A}(M_k, M_l)=0$ for every $k,l\in K$ with $k\neq l$. 
Then $\bigoplus_{k\in K}M_k$ is dual self-$(\bigoplus_{k\in K}F_k)$-split if and only if 
$M_k$ is dual self-$F_k$-split for every $k\in K$.
\item Then $\bigoplus_{k\in K}M_k$ is dual strongly self-$(\bigoplus_{k\in K}F_k)$-split if and only if 
$M_k$ is dual strongly self-$F_k$-split for every $k\in K$ and ${\rm Hom}_{\A}(F_k, F_l)=0$ for every $k,l\in K$ with $k\neq l$.
\end{enumerate}
\end{enumerate}
\end{theor}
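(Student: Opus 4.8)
The plan is to reduce everything to the key structural result Theorem \ref{t:key}, which says that an object $M$ sitting in a fully invariant short exact sequence $0\to F\to M\to C\to 0$ is (strongly) self-$F$-split if and only if $M\cong F\oplus C$ and $C$ is (strongly) self-Rickart, and to the corresponding (dual) statement, together with Lemma \ref{l:dsum}. First I would set $M=\bigoplus_{k\in K}M_k$, $F=\bigoplus_{k\in K}F_k$ and $C=\bigoplus_{k\in K}C_k$; under the hypothesis $\Hom_{\A}(M_k,M_l)=0$ for $k\neq l$, Lemma \ref{l:dsum} guarantees that the induced sequence $0\to F\to M\to C\to 0$ is fully invariant, so Theorem \ref{t:key} applies. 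The core of part (1)(i) then becomes: $M$ is self-$F$-split iff $M\cong F\oplus C$ and $C$ is self-Rickart. Because the summand decompositions $M_k\cong F_k\oplus C_k$ assemble into $M\cong F\oplus C$ exactly when each $M_k\cong F_k\oplus C_k$, and because under $\Hom_{\A}(M_k,M_l)=0$ one has $\Hom_{\A}(C_k,C_l)=0$ for $k\neq l$ (as $C_k$ is a quotient of $M_k$ and $C_l$ is a subobject of $M_l$, so any morphism $C_k\to C_l$ lifts/factors to give a morphism $M_k\to M_l$, which is $0$), the self-Rickart property of $C=\bigoplus C_k$ is equivalent to that of each $C_k$ by the known behaviour of self-Rickart objects under direct sums of Hom-orthogonal families (cf. \cite[Corollary~2.11]{C-K} for the summand-direction and the coproduct theorem for self-Rickart objects when the pieces are mutually Hom-orthogonal). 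Combining the three equivalences gives (1)(i).

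For part (1)(ii) I would argue similarly but invoke the "strong" half of Theorem \ref{t:key}: $M$ is strongly self-$F$-split iff $M\cong F\oplus C$ and $C$ is strongly self-Rickart. Now I do \emph{not} assume $\Hom_{\A}(M_k,M_l)=0$ a priori; instead I must produce it. The point is that a coproduct $\bigoplus C_k$ is strongly self-Rickart iff each $C_k$ is strongly self-Rickart \emph{and} $\Hom_{\A}(C_k,C_l)=0$ for $k\neq l$ — this is the coproduct theorem for strongly self-Rickart objects, where the orthogonality is forced because in a strongly self-Rickart object every direct summand is fully invariant, and $C_k$ is a direct summand of $\bigoplus C_l$ onto which a nonzero morphism from $C_j$ would violate full invariance. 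Here the subtlety is that for $M=\bigoplus M_k$ to be strongly self-$F$-split we first need the sequence $0\to F\to M\to C\to 0$ to be fully invariant, so I would note that strong self-$F$-splitness of $M$ already forces $M\cong F\oplus C$ with $F$ fully invariant, hence the $C_k=M_k/F_k$ are the pieces of $C=M/F$, and $\Hom_{\A}(C_k,C_l)=0$ translates back (via the splitting $M_k\cong F_k\oplus C_k$ and the fact that $F_k$ is fully invariant in $M_k$, so $\Hom_{\A}(F_j,C_l)$ contributions vanish) into $\Hom_{\A}(M_k,M_l)=0$; conversely, given the stated hypotheses, Lemma \ref{l:dsum} makes the sequence fully invariant and the converse direction of the coproduct theorem for strongly self-Rickart objects finishes it. Part (2) is entirely dual: apply Theorem \ref{t:key}(2), replacing "self-Rickart $C$" by "dual self-Rickart $F$", and use the dual coproduct theorem for (strongly) dual self-Rickart objects, noting that $\Hom_{\A}(F_k,F_l)=0$ is now the orthogonality condition that appears.

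The main obstacle I anticipate is the bookkeeping in part (ii) (and its dual): carefully disentangling the two orthogonality conditions $\Hom_{\A}(M_k,M_l)=0$ and $\Hom_{\A}(C_k,C_l)=0$ and showing they are equivalent \emph{in the presence of the splittings $M_k\cong F_k\oplus C_k$ with $F_k$ fully invariant}. Concretely, a morphism $M_k\to M_l$ has the block form $\left[\begin{smallmatrix} a&b\\c&d\end{smallmatrix}\right]:F_k\oplus C_k\to F_l\oplus C_l$; full invariance of $F_l$ in $M_l$ kills the $c$-entry, full invariance of $F_k$ in $M_k$ forces the $b$-entry to factor appropriately, and one is left with the claim that $d:C_k\to C_l$ vanishing is equivalent to the whole morphism being controlled — this needs the observation that $\Hom_{\A}(F_k,M_l)$ and $\Hom_{\A}(M_k,F_l)$ are themselves pinned down by full invariance, which should follow from the preradical description (Proposition \ref{p:fiprerad}) applied to the coproduct. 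Once this dictionary between the two Hom-vanishing conditions is established, the rest is a routine invocation of Theorem \ref{t:key} and the coproduct theorems for (strongly) (dual) self-Rickart objects, so I would keep those verifications brief and cite the relevant earlier results.
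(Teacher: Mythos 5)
Your handling of parts (1)(i) and (2)(i) is essentially sound, though it takes a slightly different route from the paper: you reduce both directions through Theorem \ref{t:key} and a coproduct theorem for (dual) self-Rickart objects over Hom-orthogonal families, whereas the paper proves (i) directly — by Hom-orthogonality every endomorphism of $\bigoplus_{k}M_k$ is diagonal, the pullback along $\bigoplus_k i_k$ decomposes as the coproduct of the componentwise pullbacks, and a coproduct of (fully invariant) sections is a (fully invariant) section, with Lemma \ref{l:dsum} and Corollary \ref{c:strel} giving the easy direction. Your route is fine provided you justify (or prove inline) the Rickart coproduct statement you invoke, and note that your claim "$C_l$ is a subobject of $M_l$", used to get $\Hom_{\A}(C_k,C_l)=0$ from $\Hom_{\A}(M_k,M_l)=0$, is only valid once the splitting $M_l\cong F_l\oplus C_l$ is available — which it is in the direction where you use it, but your parenthetical does not say so.

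The genuine gap is in your plan for the strong parts (1)(ii) and (2)(ii). You propose, and call the main step, a "dictionary" showing that in the presence of the splittings $M_k\cong F_k\oplus C_k$ and full invariance, the condition $\Hom_{\A}(C_k,C_l)=0$ for $k\neq l$ translates back into $\Hom_{\A}(M_k,M_l)=0$, which you then want in order to apply Lemma \ref{l:dsum} and part (i). This translation is false: take $M_1=M_2=\mathbb{Z}_p$ with $F_k=M_k$ and $C_k=0$; each $M_k$ is strongly self-$F_k$-split, $\Hom_{\A}(C_k,C_l)=0$ trivially, the coproduct sequence $0\to M\to M\to 0\to 0$ is fully invariant and $\bigoplus_k M_k$ is strongly self-$(\bigoplus_k F_k)$-split, yet $\Hom_{\A}(M_1,M_2)\neq 0$. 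The root error is that full invariance of $F_l$ in $M_l$ only constrains endomorphisms of $M_l$, so it says nothing about the components ($c$- or $b$-entries) of a morphism $M_k\to M_l$ with $k\neq l$; your step "full invariance of $F_l$ in $M_l$ kills the $c$-entry" has no justification for $k\neq l$, and the dual dictionary between $\Hom_{\A}(M_k,M_l)=0$ and $\Hom_{\A}(F_k,F_l)=0$ in (2)(ii) fails for the same reason. Fortunately the dictionary is also unnecessary, and once you drop it your outer skeleton is exactly the paper's proof of (ii): the hypothesis that $\bigoplus_k M_k$ be strongly self-$(\bigoplus_k F_k)$-split already presupposes that the coproduct sequence is fully invariant (this is part of the formulation, not something to be manufactured via Lemma \ref{l:dsum}), so Theorem \ref{t:key} reduces the statement to $\bigoplus_k M_k\cong \bigoplus_k F_k\oplus\bigoplus_k C_k$ together with $\bigoplus_k C_k$ strongly self-Rickart, and \cite[Theorem~3.6]{CO1} converts the latter into "each $C_k$ strongly self-Rickart and $\Hom_{\A}(C_k,C_l)=0$ for $k\neq l$", after which Theorem \ref{t:key} applied to each fully invariant sequence $0\to F_k\to M_k\to C_k\to 0$ finishes both directions; dually for (2)(ii) with \cite[Theorem~3.6]{CO1} applied to $\bigoplus_k F_k$.
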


\begin{proof} (1) (i) The direct implication follows by Lemma \ref{l:dsum} and Corollary \ref{c:strel}.

Conversely, assume that $M_k$ is (strongly) self-$F_k$-split for every $k\in K$.
Let $g:\bigoplus_{k\in K}M_k\to \bigoplus_{k\in K}M_k$ be a morphism. 
Since ${\rm Hom}_{\A}(M_k, M_l)=0$, for every $k,l\in K$ with $k\neq l$, the matrix of $g$ has zero entries 
except for the entries $(k,k)$ with $k\in K$, which are some morphisms $g_k:M_k\to M_k$. 
Hence $g=\bigoplus_{k\in K}g_k$. Then we have pullback squares as follows:
$$\SelectTips{cm}{}
\xymatrix{
P_k \ar[r]^-{j_k} \ar[d]_{f_k} & M_k \ar[d]^{g_k} \\
F_k \ar[r]_{i_k} & M_k }
\hspace{2cm}
\xymatrix{
\bigoplus_{k\in K} P_k \ar[rr]^-{\bigoplus_{k\in K} j_k} \ar[d]_{\bigoplus_{k\in K} f_k} && 
\bigoplus_{k\in K} M_k \ar[d]^{\bigoplus_{k\in K} g_k} \\
\bigoplus_{k\in K} F_k \ar[rr]_{\bigoplus_{k\in K} i_k} && \bigoplus_{k\in K} M_k }
$$
For every $k\in K$, $M_k$ is (strongly) self-$F_k$-split, hence $j_k$ is a (fully invariant) section. 
It follows that $\bigoplus_{k\in K} j_k$ is a (fully invariant) section (by Lemma \ref{l:dsum}). 
Hence $\bigoplus_{k\in K}M_k$ is (strongly) self-$(\bigoplus_{k\in K}F_k)$-split.

(ii) By \cite[Theorem~3.6]{CO1}, $\bigoplus_{k\in K}C_k$ is strongly self-Rickart if and only if 
$C_k$ is strongly self-Rickart and ${\rm Hom}_{\A}(C_k, C_l)=0$ for every $k,l\in K$ with $k\neq l$.
Then use (i) and Theorem \ref{t:key} to derive the conclusion.
\end{proof}

\begin{exam} \rm Let us see that the zero Hom conditions from Theorem \ref{t:homzero} are not superfluous.

(a) Consider the abelian group $G=G_1\oplus G_2$, where $G_1=\mathbb{Z}_q\oplus \mathbb{Z}$ and 
$G_2=\mathbb{Z}_p$ for some distinct primes $p$ and $q$. By \cite[Lemma~1.9]{OHS}, $F_1\cong \mathbb{Z}_q$ is 
a fully invariant subgroup of $G_1$, because $\Hom_{\mathbb{Z}}(\mathbb{Z}_q,\mathbb{Z})=0$. 
Clearly, $F_2=0$ is a fully invariant subgroup of $G_2$. 
By Theorem \ref{t:key}, $G_1\cong F_1\oplus \mathbb{Z}$ is strongly self-$F_1$-split and 
$G_2=\mathbb{Z}_p$ is strongly self-$F_2$-split, because $\mathbb{Z}$ and $\mathbb{Z}_p$ are strongly self-Rickart 
\cite[Corollary~3.9]{CO1}. Note that $F_1\oplus F_2=\mathbb{Z}_q$ 
is a fully invariant subgroup of $G=G_1\oplus G_2$, because $\Hom_{\mathbb{Z}}(\mathbb{Z}_q,\mathbb{Z}\oplus \mathbb{Z}_p)=0$.
By Theorem \ref{t:key}, $G$ is not self-$F_1\oplus F_2$-split, because $\mathbb{Z}\oplus \mathbb{Z}_p$ is not 
self-Rickart \cite[Example~2.5]{LRR10}. Note that $\Hom_{\mathbb{Z}}(G_1,G_2)\neq 0$.  

(b) Consider the abelian group $G=G_1\oplus G_2$, where $G_1=\mathbb{Z}_{p^{\infty}}\oplus \mathbb{Z}_q$ and 
$G_2=\mathbb{Z}_p$ for some distinct primes $p$ and $q$. By \cite[Lemma~1.9]{OHS}, $F_1\cong \mathbb{Z}_{p^{\infty}}$ is 
a fully invariant subgroup of $G_1$, because $\Hom_{\mathbb{Z}}(\mathbb{Z}_{p^{\infty}},\mathbb{Z}_q)=0$. 
Clearly, $F_2=\mathbb{Z}_p$ is a fully invariant subgroup of $G_2$. 
By Theorem \ref{t:key}, $G_1\cong F_1\oplus \mathbb{Z}_q$ is dual strongly self-$F_1$-split and 
$G_2=\mathbb{Z}_p$ is dual strongly $0$-split, because $F_1\cong \mathbb{Z}_{p^{\infty}}$ and $\mathbb{Z}_p$ 
are dual strongly self-Rickart \cite[Corollary~3.9]{CO1}. Note that $F_1\oplus F_2=\mathbb{Z}_{p^{\infty}}\oplus \mathbb{Z}_p$ 
is a fully invariant subgroup of $G=G_1\oplus G_2$, 
because $\Hom_{\mathbb{Z}}(\mathbb{Z}_{p^{\infty}}\oplus \mathbb{Z}_p,\mathbb{Z}_q)=0$.
By Theorem \ref{t:key}, $G$ is not dual self-$F_1\oplus F_2$-split, 
because $\mathbb{Z}_{p^{\infty}}\oplus \mathbb{Z}_p$ is not 
self-Rickart \cite[Example~2.10]{LRR11}. Note that $\Hom_{\mathbb{Z}}(G_2,G_1)\neq 0$.  
\end{exam}

\begin{theor} \label{t:ds} Let $\mathcal{A}$ be an abelian category. 
Let $M$ and $N$ be objects of $\A$, $N=\bigoplus_{k=1}^n N_k$ a direct sum decomposition, 
and $0\to F\to N\to C\to 0$ a fully invariant short exact sequence in $\A$. Then:
\begin{enumerate} 
\item $N$ is (strongly) $M$-$F$-split if and only if 
$N_k$ is (strongly) $M$-$(F\cap N_k)$-split for every $k\in \{1,\dots,n\}$.
\item $N$ is dual (strongly) $M$-$F$-split if and only if 
$N_k$ is dual (strongly) $M$-$((F+N_k)/N_k)$-split for every $k\in \{1,\dots,n\}$.
\end{enumerate}
\end{theor}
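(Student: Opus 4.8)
The plan is to prove part (1) in full and to deduce part (2) from it by the duality principle in abelian categories. The forward implication of (1) is immediate: each $N_k$ is a direct summand of $N$, so Corollary~\ref{c:strel}(1), applied with $M_1=M$ and $N_1=N_k$, shows that $N_k$ is (strongly) $M$-$(F\cap N_k)$-split for every $k$.

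For the converse I would first record how the fully invariant sequence splits up along the given decomposition. Iterating Proposition~\ref{p:fids} gives $F\cong\bigoplus_{k=1}^n(F\cap N_k)$, and under this identification $i:F\to N$ is the diagonal $\bigoplus_k j_k$, with $j_k:F\cap N_k\to N_k$ the inclusion; hence $C\cong\bigoplus_{k=1}^n C_k$ with $C_k=N_k/(F\cap N_k)$, and the cokernel $d:N\to C$ is $\bigoplus_k d_k$ with $d_k={\rm coker}(j_k)$. Consequently, for any $g:M\to N$, writing $g_k=p_kg$ with $p_k:N\to N_k$ the canonical projection, the $k$-th component of $dg$ is $d_kg_k$, and since the kernel of a morphism into a finite direct sum is the intersection of the kernels of its components, ${\rm Ker}(dg)=\bigcap_{k=1}^n{\rm Ker}(d_kg_k)$ as subobjects of $M$. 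By Lemma~\ref{l:kc}(1) it thus suffices to show that this intersection is a (fully invariant) section in $M$.

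This I would establish by induction on $n$, the case $n=1$ being trivial. Since $N_1$ is (strongly) $M$-$(F\cap N_1)$-split, $P_1:={\rm Ker}(d_1g_1)$ is a (fully invariant) section in $M$; let $\iota_1:P_1\to M$ be its inclusion. The inclusion $F\cap N_2\to N_2$ is fully invariant by Corollary~\ref{c:Fds}(1), so, $N_2$ being (strongly) $M$-$(F\cap N_2)$-split, Corollary~\ref{c:strel}(1) applied to the object $N_2$ and the direct summand $P_1$ of $M$ shows that $N_2$ is (strongly) $P_1$-$(F\cap N_2)$-split; hence ${\rm Ker}(d_2g_2\iota_1)=P_1\cap{\rm Ker}(d_2g_2)$ (the pullback of ${\rm Ker}(d_2g_2)$ along $\iota_1$) is a (fully invariant) section in $P_1$. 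Composing its inclusion with $\iota_1$, and using that a composition of sections is a section and that a composition of two fully invariant kernels is a fully invariant kernel (Lemma~\ref{l:trans}(1)), one gets that ${\rm Ker}(d_1g_1)\cap{\rm Ker}(d_2g_2)$ is a (fully invariant) section in $M$. Repeating this step with $N_3,\dots,N_n$ in turn yields that $\bigcap_{k=1}^n{\rm Ker}(d_kg_k)={\rm Ker}(dg)$ is a (fully invariant) section in $M$, so $N$ is (strongly) $M$-$F$-split by Lemma~\ref{l:kc}(1).

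The biproduct/pullback bookkeeping behind the identity ${\rm Ker}(dg)=\bigcap_k{\rm Ker}(d_kg_k)$ is routine. The only point requiring some care, in the strong case, is tracking full invariance along the chain of inclusions ${\rm Ker}(dg)\hookrightarrow\cdots\hookrightarrow P_1\hookrightarrow M$, for which Lemma~\ref{l:trans}(1) is exactly what is needed. I do not anticipate a genuine obstacle here: the decisive idea is that Corollary~\ref{c:strel} allows one to replace the source $M$ by successive direct summands, so — in contrast with Theorem~\ref{t:homzero} — no extra hypothesis such as an SIP condition or the vanishing of Hom groups is required.
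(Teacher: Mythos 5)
Your proposal is correct and follows essentially the same route as the paper's proof: the direct implication via Corollary~\ref{c:strel}, and the converse by decomposing $F\cong\bigoplus_{k}(F\cap N_k)$ through Proposition~\ref{p:fids} and then inductively restricting the (strong) splitness of each $N_k$ to the direct summand of $M$ already produced (Theorem~\ref{t:epimono}/Corollary~\ref{c:strel}), concluding by composing (fully invariant) sections via Lemma~\ref{l:trans}. The only difference is presentational: the paper carries out the case $n=2$ by explicitly verifying the relevant pullback squares, whereas you encode the same step as ${\rm Ker}(dg)=\bigcap_{k}{\rm Ker}(d_kg_k)$ and invoke Lemma~\ref{l:kc}, which is indeed routine bookkeeping.
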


\begin{proof} (1) The direct implication follows by Corollary~\ref{c:strel}.

For the sake of clarity, we prove the converse for $n=2$, the general case following inductively. 
Assume that $N_k$ is (strongly) $M$-$(F\cap N_k)$-split for $k=1,2$. By Proposition \ref{p:fids} 
we have $F\cong (F\cap N_1)\oplus (F\cap N_2)$. Denote by 
$i=\left[\begin{smallmatrix} i_1&0 \\ 0&i_2 \end{smallmatrix}\right]:(F\cap N_1)\oplus (F\cap N_2)\to N_1\oplus N_2$
the inclusion morphism. 
Let $g=\left[\begin{smallmatrix} g_1\\g_2 \end{smallmatrix}\right]:M\to N=N_1\oplus N_2$ be a morphism in $\A$.
Consider the following pullback squares:
$$\SelectTips{cm}{}
\xymatrix{
P_1 \ar[r]^-{j_1} \ar[d]_{f_1} & M \ar[d]^{g_1} \\
F\cap N_1 \ar[r]_-{i_1} & N_1 }
\hspace{2cm}
\xymatrix{
P_2 \ar[r]^-{j_2} \ar[d]_{f_2} & M \ar[d]^{g_2} \\
F\cap N_2 \ar[r]_-{i_2} & N_2 }$$
Since $N_1$ is (strongly) $M$-$(F\cap N_1)$-split and $N_2$ is (strongly) $M$-$(F\cap N_2)$-split, 
$j_1$ and $j_2$ are (fully invariant) sections. Then there exists an epimorphism $p_2:M\to P_2$ such that $p_2j_2=1_{P_2}$.
Consider the pullback of $g_1j_2:P_2\to N_1$ and $i_1$ in order to get the outer part of the following commutative diagram:
$$\SelectTips{cm}{}
\xymatrix{
P \ar[r]^u \ar@{-->}[d]_w \ar@/_2pc/[dd]_v & P_2 \ar[d]^{j_2} \\
P_1 \ar[r]^-{j_1} \ar[d]_{f_1} & M \ar[d]^{g_1} \\
F\cap N_1 \ar[r]_-{i_1} & N_1 }
$$
By Theorem \ref{t:epimono}, $N_1$ is $P_2$-$F_1$-split, hence $u$ is a (fully invariant) section.
Since the lower square is a pullback, so is the upper square by \cite[Lemma~5.1]{Kelly}. 
Also, there exists a unique morphism $w:P\to P_1$ such that $f_1w=v$ and $j_1w=j_2u$.

We claim that the following square is a pullback:
$$\SelectTips{cm}{}
\xymatrix{
P \ar[r]^-{j_2u} \ar[d]_{\left[\begin{smallmatrix} v\\f_2u \end{smallmatrix}\right]} & 
M \ar[d]^{\left[\begin{smallmatrix} g_1\\g_2 \end{smallmatrix}\right]} \\
(F\cap N_1)\oplus (F\cap N_2) \ar[r]_-{\left[\begin{smallmatrix} i_1&0 \\ 0&i_2 \end{smallmatrix}\right]} & N_1\oplus N_2 }
$$
It is commutative, because we have \[\left[\begin{smallmatrix} i_1&0 \\ 0&i_2 \end{smallmatrix}\right]
\left[\begin{smallmatrix} v\\f_2u \end{smallmatrix}\right]=
\left[\begin{smallmatrix} i_1v\\i_2f_2u \end{smallmatrix}\right]=
\left[\begin{smallmatrix} g_1j_2u\\g_2j_2u \end{smallmatrix}\right]=
\left[\begin{smallmatrix} g_1\\g_2 \end{smallmatrix}\right]j_2u.\] 
Now let $\left[\begin{smallmatrix} \alpha_1\\ \alpha_2 \end{smallmatrix}\right]:X\to (F\cap N_1)\oplus (F\cap N_2)$ 
and $\beta:X\to M$ be morphisms such that 
$\left[\begin{smallmatrix} i_1&0 \\ 0&i_2 \end{smallmatrix}\right]
\left[\begin{smallmatrix} \alpha_1\\ \alpha_2 \end{smallmatrix}\right]=
\left[\begin{smallmatrix} g_1\\g_2 \end{smallmatrix}\right]\beta$. 
Hence $i_1\alpha_1=g_1\beta$ and $i_2\alpha_2=g_2\beta$.
The pullback properties of the first two squares from the proof yield unique morphisms
$\gamma_1:X\to P_1$ such that $f_1\gamma_1=\alpha_1$ and $j_1\gamma_1=\beta$,
and $\gamma_2:X\to P_2$ such that $f_2\gamma_2=\alpha_2$ and $j_2\gamma_2=\beta$.
Hence $j_1\gamma_1=j_2\gamma_2$. Since the square $PP_2P_1M$ is a pullback, 
there exists a unique morphism $\gamma:X\to P$ such that $w\gamma=\gamma_1$ and $u\gamma=\gamma_2$. 
It follows that
$\left[\begin{smallmatrix} v\\f_2u \end{smallmatrix}\right]\gamma=
\left[\begin{smallmatrix} f_1w\gamma \\ f_2\gamma_2 \end{smallmatrix}\right]=
\left[\begin{smallmatrix} f_1\gamma_1 \\ f_2\gamma_2 \end{smallmatrix}\right]=
\left[\begin{smallmatrix} \alpha_1\\ \alpha_2 \end{smallmatrix}\right]$
and $j_2u\gamma=j_2\gamma_2=\beta$. For uniqueness, if there exists a morphism $\gamma':X\to P$ 
such that $\left[\begin{smallmatrix} v\\f_2u \end{smallmatrix}\right]\gamma'=
\left[\begin{smallmatrix} \alpha_1\\ \alpha_2 \end{smallmatrix}\right]$
and $j_2u\gamma'=\beta$, then we have $j_2u\gamma=j_2u\gamma'$. 
Then $\gamma=\gamma'$, because $j_2$ and $u$ are monomorphisms.
Thus, the required square is a pullback. 

Finally, since $j_2$ and $u$ are (fully invariant) sections, so is $j_2u$ (by Lemma \ref{l:trans}).
This shows that $N$ is (strongly) $M$-$F$-split.
\end{proof}

\begin{cor} \label{c:fg} Let $\mathcal{A}$ be an abelian category. 
Let $M$ and $N$ be objects of $\A$, $N=\bigoplus_{k\in K} N_k$ a direct sum decomposition, 
and $0\to F\to N\to C\to 0$ a fully invariant short exact sequence in $\A$. Then:
\begin{enumerate} \item Assume that $M$ is finitely generated. Then $N$ is (strongly) $M$-$F$-split if and only if 
$N_k$ is (strongly) $M$-$(F\cap N_k)$-split for every $k\in K$.
\item Assume that $N$ is finitely cogenerated. Then $N$ is dual (strongly) $M$-$F$-split if and only if 
$N_k$ is dual (strongly) $M$-$((F+N_k)/N_k)$-split for every $k\in K$.
\end{enumerate}
\end{cor}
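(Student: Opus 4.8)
The plan is to reduce both statements to the finite-coproduct case already settled in Theorem~\ref{t:ds}, using the finiteness hypotheses to control the morphisms that occur. The direct implications need no finiteness assumption and go exactly as in the proof of Theorem~\ref{t:ds}: each $N_k$ is a direct summand of $N$ and each $N/N_k$ is a quotient of $N$, the inclusion monomorphism $F\cap N_k\to N_k$ is fully invariant by Corollary~\ref{c:Fds} (and dually the induced epimorphism $N/N_k\to (F+N_k)/N_k$ is fully coinvariant), so Corollary~\ref{c:strel} applies termwise.

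For the converse of (1), assume each $N_k$ is (strongly) $M$-$(F\cap N_k)$-split and let $g\colon M\to N=\bigoplus_{k\in K}N_k$ be a morphism. Since $M$ is finitely generated, $g$ factors as $M\xrightarrow{\,g_0\,}N_0\hookrightarrow N$ with $N_0=\bigoplus_{k\in K_0}N_k$ for some finite $K_0\subseteq K$. As $N_0$ is a direct summand of $N$, Proposition~\ref{p:fids} gives $F\cap N_0\cong\bigoplus_{k\in K_0}(F\cap N_k)$, and $(F\cap N_0)\cap N_k=F\cap N_k$ for $k\in K_0$; hence Theorem~\ref{t:ds}(1), applied to the finite decomposition $N_0=\bigoplus_{k\in K_0}N_k$, shows that $N_0$ is (strongly) $M$-$(F\cap N_0)$-split. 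Now the pullback of the monomorphism $i\colon F\to N$ along the inclusion $N_0\to N$ is the inclusion $F\cap N_0\to N_0$, so by pasting this with the pullback of $F\cap N_0\to N_0$ along $g_0$ and using \cite[Lemma~5.1]{Kelly}, the pullback $j\colon P\to M$ of $i$ along $g$ coincides with the pullback of $F\cap N_0\to N_0$ along $g_0$. Since $N_0$ is (strongly) $M$-$(F\cap N_0)$-split, $j$ is a (fully invariant) section, and as $j={\rm ker}(dg)$ (Lemma~\ref{l:kc}) this proves that $N$ is (strongly) $M$-$F$-split.

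For the converse of (2), note that if $N=\bigoplus_{k\in K}N_k$ is finitely cogenerated, then the subobjects $L_k=\bigoplus_{j\in K,\,j\neq k}N_j$ satisfy $\bigcap_{k\in K}L_k=0$, so some finite $K_0\subseteq K$ has $\bigcap_{k\in K_0}L_k=\bigoplus_{j\notin K_0}N_j=0$; hence $N_j=0$ for $j\notin K_0$ and $N=\bigoplus_{k\in K_0}N_k$ is a finite coproduct, whence Theorem~\ref{t:ds}(2) applies directly. Equivalently, (2) is the dual of (1) and follows by the duality principle.

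The only slightly delicate points are the factorization of $g$ through a finite subcoproduct (the defining property of a finitely generated object relative to coproducts) and the careful pasting of pullbacks identifying $j$ with the section supplied by Theorem~\ref{t:ds}; the \emph{fully invariant} refinement of ``section'' transfers automatically, being already incorporated in the cited results. I expect no genuine obstacle beyond this bookkeeping.
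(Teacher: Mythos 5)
Your proof of part (1) is correct and follows essentially the paper's own argument: the direct implication is Corollary~\ref{c:strel}, and for the converse you factor $g$ through a finite subcoproduct $N_0=\bigoplus_{k\in K_0}N_k$ using the finite generation of $M$, apply Theorem~\ref{t:ds} to this finite decomposition (the needed full invariance of $F\cap N_0\to N_0$ being Corollary~\ref{c:Fds}, which you should cite explicitly at this point rather than only for the individual $F\cap N_k\to N_k$), and identify the pullback along $g$ with the pullback along $g_0$ by pasting pullbacks via Kelly's lemma --- this is exactly the paper's proof. For part (2) the paper gives no argument beyond its blanket duality convention, whereas you supply an explicit reduction: if $N=\bigoplus_{k\in K}N_k$ is finitely cogenerated then almost all $N_k$ vanish, so Theorem~\ref{t:ds}(2) applies. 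Two remarks on that. First, your key claim $\bigcap_{k\in K}\bigl(\bigoplus_{j\neq k}N_j\bigr)=0$ is an AB5-type fact: it holds in Grothendieck, module and comodule categories (where the projections of a coproduct are jointly monic), but it can fail in an arbitrary abelian category with coproducts (e.g.\ in the opposite of a module category); this is, however, the same level of implicit hypothesis as the paper's own use in (1) of the factorization of a morphism from a finitely generated object through a finite subcoproduct, so you are not below the paper's standard of rigour. Second, the fallback ``(2) is the dual of (1)'' should be treated with care: the formal dual of (1) concerns a \emph{product} decomposition of $N$ together with a finitely cogenerated $M$, not a coproduct decomposition with $N$ finitely cogenerated, so your explicit finite-reduction is arguably the more honest route to (2) than the duality principle the paper invokes.
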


\begin{proof} (1) The direct implication follows by Corollary~\ref{c:strel}. 

Conversely, assume that $N_k$ is (strongly) $M$-$(F\cap N_k)$-split for every $k\in K$.
Let $g:M\to N=\bigoplus_{k\in K} N_k$ be a morphism in $\mathcal{A}$. 
Since $M$ is finitely generated, we may write 
$g=lg'$ for some morphism $g':M\to \bigoplus_{k\in F} N_k$ and inclusion morphism $l:\bigoplus_{k\in A} N_k\to N$, 
where $A$ is a finite subset of $K$. Consider the following commutative diagram:
$$\SelectTips{cm}{}
\xymatrix{
P \ar[r]^j \ar[d]_{f'} & M \ar[d]^{g'} \\
F\cap \left( \bigoplus_{k\in A} N_k\right ) \ar[r]^-{i'} \ar[d]_{l'} & \bigoplus_{k\in A} N_k \ar[d]^{l} \\
F \ar[r]_-{i} & N }
$$
in which the lower and the upper rectangles are pullbacks. Then the outer rectangle is a pullback \cite[Lemma~5.1]{Kelly}.
By Corollary \ref{c:Fds}, $i'$ is a fully invariant kernel. 
By Theorem~\ref{t:ds}, $\bigoplus_{k\in A} N_k$ is (strongly) $M$-$(F\cap \left( \bigoplus_{k\in A} N_k\right ))$-split,
hence $j$ is a (fully invariant) section. It follows that $N$ is (strongly) $M$-$F$-split.
\end{proof}

\begin{remark} \rm Note that in general Theorem \ref{t:ds} does not hold for arbitrary coproducts. 
For an example in the case $F=0$ see \cite[Example~3.3]{C-K}.
\end{remark}

Let $\A$ be an abelian category and let $r$ be a preradical of $\A$. 
Recall that an object $A$ of $\A$ is called \emph{$r$-torsion} if $r(A)=A$, 
and \emph{$r$-torsionfree} if $r(A)=0$. The class of $r$-torsion objects is closed under factor objects and coproducts,
while the class of $r$-torsionfree objects is closed under subobjects and products. 
The preradical $r$ is called \emph{superhereditary} if $r$ is hereditary and 
the class of $r$-torsion objects of $\A$ is closed under products (e.g., see \cite{BKN}). 

\begin{theor} \label{t:dsprerad} Let $\A$ be an abelian category and let $r$ be a preradical of $\A$. 
\begin{enumerate}
\item Let $M$ be an object of $\A$ and let $(N_k)_{k\in K}$ be a family of objects of $\A$ having a coproduct. 
\begin{enumerate}[(i)] 
\item Assume that $M$ has SSIP for (fully invariant) direct summands containing $r(M)$. Then 
$\bigoplus_{k\in K} N_k$ is (strongly) $M$-$r(\bigoplus_{k\in K} N_k)$-split if and only if 
$N_k$ is (strongly) $M$-$r(N_k)$-split for every $k\in K$.
\item Assume that $K$ is finite and $M$ has SIP for (fully invariant) direct summands containing $r(M)$. 
Then $\bigoplus_{k\in K} N_k$ is (strongly) $M$-$r(\bigoplus_{k\in K} N_k)$-split if and only if 
$N_k$ is (strongly) $M$-$r(N_k)$-split for every $k\in K$.
\end{enumerate}
\item Let $N$ be an object of $\A$ and let $(M_k)_{k\in K}$ be a family of objects of $\A$ having a product.
\begin{enumerate}[(i)]
\item Assume that $r$ is superhereditary and $N$ has SSSP for (fully invariant) direct summands contained in $r(N)$.  
Then $N$ is dual (strongly) $\prod_{k\in K} M_k$-$r(N)$-split if and only if 
$N$ is dual (strongly) $M_k$-$r(N)$-split for every $k\in K$.
\item Assume that $K$ is finite and $N$ has SSP for (fully invariant) direct summands contained in $r(N)$.  
Then $N$ is dual (strongly) $\prod_{k\in K} M_k$-$r(N)$-split if and only if 
$N$ is dual (strongly) $M_k$-$r(N)$-split for every $k\in K$.
\end{enumerate}
\end{enumerate}
\end{theor}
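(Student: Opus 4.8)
The plan is to prove part~(1) directly and derive part~(2) by the duality principle in abelian categories. Write $N=\bigoplus_{k\in K}N_k$, $F=r(N)$, $F_k=r(N_k)$; let $d:N\to N/F$ and $d_k:N_k\to N_k/F_k$ be the canonical epimorphisms (each fully coinvariant, by Proposition~\ref{p:fiprerad}), and let $p_k:N\to N_k$ be the canonical projections. By Lemma~\ref{l:kc}, in all four cases the statement is equivalent to one about kernels $\ker(dg)$, resp.\ $\ker(d_kg_k)$, being (fully invariant) sections, and that is the form I would work with.

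For the direct implications in (1)(i) and (1)(ii): each $N_k$ is a direct summand of $N$, and since $r(A\oplus B)=r(A)\oplus r(B)$ for every direct sum $A\oplus B$ in $\mathcal A$, writing $N=N_k\oplus\bigl(\bigoplus_{l\neq k}N_l\bigr)$ yields $F\cap N_k=r(N)\cap N_k=r(N_k)=F_k$. Applying Corollary~\ref{c:strel} with $M_1=M$ and $N_1=N_k$ then gives that $N_k$ is (strongly) $M$-$F_k$-split for every $k\in K$; note that this implication does not even use the summand intersection hypothesis on $M$.

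For the converse of (1)(i), let $g:M\to N$ be a morphism and put $g_k=p_kg:M\to N_k$. By hypothesis and Lemma~\ref{l:kc}, each $P_k:=\ker(d_kg_k)$ is a (fully invariant) section; moreover $r(M)\subseteq P_k$, because functoriality of $r$ gives $g_k(r(M))\subseteq r(N_k)$, i.e.\ $d_kg_k$ vanishes on $r(M)$. So each $P_k$ is a (fully invariant) direct summand of $M$ containing $r(M)$. Next I would establish $\ker(dg)=\bigcap_{k\in K}P_k$: since $r$ commutes with the coproduct, $r(N)=\bigoplus_{k}r(N_k)$, and taking preimages along $g$ one gets $\ker(dg)=g^{-1}(r(N))=\bigcap_k g^{-1}(p_k^{-1}(r(N_k)))=\bigcap_k\ker(d_kg_k)=\bigcap_kP_k$. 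The SSIP hypothesis for (fully invariant) direct summands containing $r(M)$ now makes $\bigcap_kP_k$ a (fully invariant) direct summand of $M$, so $\ker(dg)$ is a (fully invariant) section, and Lemma~\ref{l:kc} gives that $N$ is (strongly) $M$-$F$-split. Part (1)(ii) is identical, with SIP replacing SSIP since only finite intersections occur; it is also immediate from Theorem~\ref{t:ds} together with the identity $F\cap N_k=F_k$ established above.

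The delicate point is the identification $\ker(dg)=\bigcap_kP_k$, which hinges on $r$ commuting with the coproduct $\bigoplus_kN_k$ (an exactness property that is automatic in Grothendieck categories, the main setting of the applications) and, in the strong case, on the intersection of the fully invariant summands $P_k$ being again fully invariant --- exactly the information packaged by the SSIP hypothesis for that class. Part~(2) then follows by applying part~(1) in $\mathcal A^{\rm op}$ to the preradical $r^{-1}$: here the extra assumption in (2)(i) that $r$ be superhereditary (so that the $r$-torsion class is closed under products) is precisely what supplies the dual of ``$r$ commutes with coproducts'', which in $\mathcal A^{\rm op}$ concerns products and is no longer automatic, since products need not be exact.
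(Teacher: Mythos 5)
Your proof follows essentially the same route as the paper's: the direct implications via Corollary \ref{c:strel}, the converse by decomposing $g$ into its components $g_k=p_kg$, showing each $P_k=\mathrm{Ker}(d_kg_k)$ is a (fully invariant) direct summand of $M$ containing $r(M)$, identifying $\mathrm{Ker}(dg)=\bigcap_{k\in K}P_k$ by means of $r(\bigoplus_{k\in K}N_k)=\bigoplus_{k\in K}r(N_k)$, and then invoking SSIP (resp.\ SIP) together with Lemma \ref{l:kc}. Part (2) is handled in the paper exactly as you propose, by dualizing part (1) and using superhereditariness to ensure $r(\prod_{k\in K}M_k)=\prod_{k\in K}r(M_k)$.
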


\begin{proof} (1) (i) The direct implication follows by Corollary~\ref{c:strel}. 

Conversely, assume that $N_k$ is (strongly) $M$-$r(N_k)$-split for every $k\in K$. 
Denote by $i:r(\bigoplus_{k\in K} N_k)\to \bigoplus_{k\in K} N_k$ the inclusion monomorphism 
and by $d:\bigoplus_{k\in K} N_k\to C$ its cokernel. 
Let $g:M\to \bigoplus_{k\in K} N_k$ be a morphism in $\A$. Consider the pullback of $g$ and $i$ 
in order to get morphisms $f:P\to r(\bigoplus_{k\in K} N_k)$ and $j:P\to M$. For every $k\in K$, 
denote by $p_k:\bigoplus_{k\in K} N_k\to N_k$ the canonical projection and $g_k=p_kg:M\to N_k$. 
For every $k\in K$, denote by $i_k:r(N_k)\to N_k$ the inclusion monomorphism and by 
$d_k:N_k\to C_k$ its cokernel. For every $k\in K$, consider the following pullback diagram:
$$\SelectTips{cm}{}
\xymatrix{
P_k \ar[r]^{j_k} \ar[d]_{f_k} & M \ar[d]^{g_k} \\
r(N_k) \ar[r]_-{i_k} & N_k }
$$
Let $u:r(M)\to M$ be the inclusion monomorphism. 
Since $r$ is a preradical, each morphism $g_k:M\to N_k$ induces a morphism $l_k:r(M)\to r(N_k)$
such that $g_ku=i_kl_k$. By the pullback property of the upper square it follows that 
$r(M)\subseteq P_k$ for every $k\in K$.
Since each $N_k$ is (strongly) $M$-$r(N_k)$-split, each $j_k:P_k\to M$ is a (fully invariant) section. 
Note that $r(\bigoplus_{k\in K} N_k)=\bigoplus_{k\in K} r(N_k)$ \cite[I.1.2]{BKN}. 
It follows that \[{\rm Ker}(dg)=P=\bigcap_{k\in K} P_k=\bigcap_{k\in K} {\rm Ker}(d_kg_k)\] 
and $M$ has SSIP for (fully invariant) direct summands containing $r(M)$, 
it follows that $j={\rm ker}(dg):P\to M$ is a (fully invariant) section. 
Hence $\bigoplus_{k\in K} N_k$ is (strongly) $M$-$r(\bigoplus_{k\in K} N_k)$-split by Lemma \ref{l:kc}.

(ii) This is similar to (i).

(2) Let us only note that if $r$ is superhereditary, then $r(\prod_{k\in K} M_k)=\prod_{k\in K}{r(M_k)}$ 
by definition and \cite[I.1.2]{BKN}. Then the proof is dual to (1). 
\end{proof}

\begin{cor} Let $\A$ be an abelian category, let $r$ be a preradical of $\A$, let $M_1,\dots,M_k$ be objects of $\A$
and $l\in \{1,\dots,n\}$.
\begin{enumerate}
\item $\bigoplus_{k=1}^n M_k$ is (strongly) $M_l$-$r(\bigoplus_{k=1}^n M_k)$-split if and only if 
$M_k$ is (strongly) $M_l$-$r(M_k)$-split for every $k\in \{1,\dots,n\}$.
\item $M_l$ is dual (strongly) $\bigoplus_{k=1}^n M_k$-$r(M_l)$-split if and only if 
$M_l$ is dual (strongly) $M_k$-$r(M_l)$-split for every $k\in \{1,\dots,n\}$.
\end{enumerate}
\end{cor}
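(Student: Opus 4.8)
The plan is to deduce this corollary as a direct specialization of the preceding theorems, so the work is entirely in recognizing which hypotheses become automatic when the family consists of finitely many objects and the "test object" $M$ or $N$ is a direct summand of the coproduct or product.

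For part (1), I would start from Theorem~\ref{t:dsprerad}(1)(ii): with $K=\{1,\dots,n\}$ finite, $N=\bigoplus_{k=1}^n M_k$, and the chosen object playing the role of $M$ being $M_l$, one already gets the equivalence "$\bigoplus_{k=1}^n M_k$ is (strongly) $M_l$-$r(\bigoplus_{k=1}^n M_k)$-split if and only if $M_k$ is (strongly) $M_l$-$r(M_k)$-split for every $k$," provided $M_l$ has SIP for (fully invariant) direct summands containing $r(M_l)$. So the only thing to check is that SIP condition. Since $M_l$ is (strongly) $M_l$-$r(M_l)$-split is part of what we would be assuming on the right-hand side (take $k=l$), Corollary~\ref{c:SIP}(1)(ii) immediately gives that $M_l$ has SIP for (fully invariant) direct summands containing $r(M_l)$. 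One subtlety: for the forward direction of the corollary we do \emph{not} yet know $M_l$ is self-$r(M_l)$-split, so I would instead argue the forward direction directly from Corollary~\ref{c:strel}(1) (applied with $M=M_l$, the direct summand $M_k$ of $N=\bigoplus M_k$, noting $r(N)\cap M_k=r(M_k)$ since $r$ is a preradical hence $r$ commutes with the projections onto the summands). Then for the backward direction, the instance $k=l$ of the hypothesis supplies self-$r(M_l)$-splitness of $M_l$, hence SIP via Corollary~\ref{c:SIP}, and Theorem~\ref{t:dsprerad}(1)(ii) closes the argument.

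For part (2), the argument is dual. Here the chosen object $M_l$ plays the role of $N$ in Theorem~\ref{t:dsprerad}(2)(ii), and the family $(M_k)_{k=1}^n$ supplies the product $\prod_{k=1}^n M_k=\bigoplus_{k=1}^n M_k$ (a finite biproduct). The relevant hypothesis becomes: $M_l$ has SSP for (fully invariant) direct summands contained in $r(M_l)$. As above, the $k=l$ instance of the right-hand side says $M_l$ is dual (strongly) $M_l$-$r(M_l)$-split, and Corollary~\ref{c:SIP}(2)(ii) then yields the required SSP property; the forward direction again goes directly through Corollary~\ref{c:strel}(2). Note that unlike the general part~(2) of Theorem~\ref{t:dsprerad}, here we do \emph{not} need $r$ to be superhereditary, because a finite coproduct is a finite product, so $r(\bigoplus_{k=1}^n M_k)=\bigoplus_{k=1}^n r(M_k)$ holds for any preradical by \cite[I.1.2]{BKN} without any exactness condition on~$r$.

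The potential obstacle is purely bookkeeping: making sure that in each application the fully-invariant-kernel hypothesis of Theorem~\ref{t:epimono}/Corollary~\ref{c:strel} is genuinely met. The point is that $r(\bigoplus M_k)\cap M_l=r(M_l)$ and the inclusion $r(M_l)\to M_l$ is fully invariant in $M_l$ by Corollary~\ref{c:Fds}(1), since $M_l$ is a direct summand of the ambient coproduct; dually for part~(2). Once this is in place, no real computation remains — the corollary is a clean instantiation of the machinery already built, and I would present it in roughly half a page: invoke Corollary~\ref{c:strel} for the forward implications, then Corollary~\ref{c:SIP} plus Theorem~\ref{t:dsprerad} for the converses.
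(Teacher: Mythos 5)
Your proposal is correct and takes essentially the same route as the paper, whose proof simply cites Corollary~\ref{c:SIP} and Theorem~\ref{t:dsprerad}: the $k=l$ instance of the right-hand side yields (strong) self-$r(M_l)$-splitness (resp.\ dual splitness) of $M_l$, hence the required SIP (resp.\ SSP) via Corollary~\ref{c:SIP}, and the finite case of Theorem~\ref{t:dsprerad} gives the converse, while the forward implications are exactly Corollary~\ref{c:strel}, which is also how the direct implication of the theorem itself is proved. Your auxiliary remarks (full invariance of $r(M_l)\subseteq M_l$, the identification $r(\bigoplus_{k=1}^n M_k)\cap M_k=r(M_k)$ for a preradical, and the fact that no superhereditariness is needed in the finite dual case) are all consistent with the paper's setup.
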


\begin{proof} This follows by Corollary \ref{c:SIP} and Theorem \ref{t:dsprerad}. 
\end{proof}

\section{Classes all of whose objects are $F$-split}

Recall that an abelian category $\mathcal{A}$ is called \emph{spectral} if every short exact sequence in $\mathcal{A}$ splits. 

\begin{theor} \label{t:spectralR} Let $\mathcal{A}$ be an abelian category.
\begin{enumerate}
\item The following are equivalent:
\begin{enumerate}[(i)] 
\item $\mathcal{A}$ is spectral.
\item $\mathcal{A}$ has enough injectives and $N$ is $M$-$F$-split for every objects $M$ and $N$ of $\mathcal{A}$ 
and for every fully invariant subobject $F$ of $N$.
\item $\mathcal{A}$ has enough injectives and every object $N$ of $\mathcal{A}$ is self-$F$-split 
for every fully invariant subobject $F$ of $N$.
\item $\mathcal{A}$ has enough injectives and $N$ is $M$-$F$-split for every objects $M$ and $N$ of $\mathcal{A}$ with
$N$ injective and for every fully invariant subobject $F$ of $N$.
\item $\mathcal{A}$ has enough injectives and every injective object $N$ of $\mathcal{A}$ is self-$F$-split 
for every fully invariant subobject $F$ of $N$.
\end{enumerate}
\item The following are equivalent:
\begin{enumerate}[(i)] 
\item $\mathcal{A}$ is spectral.
\item $\mathcal{A}$ has enough projectives and $N$ is dual $M$-$F$-split for every objects $M$ and $N$ of $\mathcal{A}$
and for every fully invariant subobject $F$ of $N$.
\item $\mathcal{A}$ has enough projectives and every object $N$ of $\mathcal{A}$ is dual self-$F$-split
for every fully invariant subobject $F$ of $N$.
\item $\mathcal{A}$ has enough projectives and $N$ is dual $M$-Rickart for every objects $M$ and $N$ of $\mathcal{A}$
with $M$ projective and for every fully invariant subobject $F$ of $N$.
\item $\mathcal{A}$ has enough projectives and every projective object $N$ of $\mathcal{A}$ is dual self-$F$-split 
for every fully invariant subobject $F$ of $N$.
\end{enumerate}
\end{enumerate}
\end{theor}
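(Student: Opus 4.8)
The plan is to establish part~(1) via the cycle (i) $\Rightarrow$ (ii) $\Rightarrow$ (iii) $\Rightarrow$ (v) $\Rightarrow$ (i), together with (ii) $\Rightarrow$ (iv) $\Rightarrow$ (v), so that all five conditions become equivalent; part~(2) then follows by the duality principle, since being spectral is self-dual and passing to $\mathcal{A}^{\mathrm{op}}$ interchanges ``enough injectives'' with ``enough projectives'' and $F$-splitness with dual $F$-splitness.

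First I would record the easy implications. For (i) $\Rightarrow$ (ii): in a spectral category every short exact sequence splits, hence every monomorphism is a section, so every object is a direct summand of each object containing it and is therefore injective; in particular $\mathcal{A}$ has enough injectives. Moreover, for any $g:M\to N$ and any fully invariant short exact sequence $0\to F\to N\stackrel{d}\to C\to 0$, the monomorphism ${\rm ker}(dg)$ is a section, so $N$ is $M$-$F$-split by Lemma~\ref{l:kc}. The implications (ii) $\Rightarrow$ (iii), (ii) $\Rightarrow$ (iv) and (iii) $\Rightarrow$ (v) are obtained simply by specializing (take $M=N$, respectively restrict to injective $N$), and (iv) $\Rightarrow$ (v) by taking $M=N$.

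The substantive step is (v) $\Rightarrow$ (i). Taking $F=0$ (which is fully invariant in every object) and $M=N$ in (v), Remark~\ref{r:Rickart} shows that every injective object of $\mathcal{A}$ is self-Rickart, so by Lemma~\ref{l:kc} the kernel of every endomorphism of an injective object is a section, in particular a direct summand. I claim this already forces every object $X$ of $\mathcal{A}$ to be injective; granting the claim, any short exact sequence $0\to A\to B\to C\to 0$ splits (extend $1_A$ along $A\hookrightarrow B$), so $\mathcal{A}$ is spectral. To prove the claim, use that $\mathcal{A}$ has enough injectives to fix a monomorphism $X\to E_0$ with $E_0$ injective and a monomorphism $E_0/X\to E_1$ with $E_1$ injective, and let $\delta:E_0\to E_1$ be the composite $E_0\to E_0/X\to E_1$, so that ${\rm ker}(\delta)=X$. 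The biproduct $E_0\oplus E_1$ is injective, and the endomorphism $\theta=\left[\begin{smallmatrix} 0&0\\ \delta&0 \end{smallmatrix}\right]:E_0\oplus E_1\to E_0\oplus E_1$ satisfies ${\rm Ker}(\theta)=X\oplus E_1$ (it factors as the canonical projection onto $E_0$, followed by $\delta$, followed by the canonical injection of $E_1$). Since $E_0\oplus E_1$ is self-Rickart, $X\oplus E_1$ is a direct summand of $E_0\oplus E_1$, hence injective, and therefore so is its direct summand $X$.

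The one genuine obstacle is this last construction: $X$ need not be a fully invariant subobject of any injective object containing it, so one cannot invoke the hypothesis on $X$ directly. Enlarging $E_0$ to $E_0\oplus E_1$, with $E_1$ chosen to absorb $E_0/X$, is precisely the device that exhibits $X$ as a direct summand of the kernel of an honest endomorphism of an injective object, after which only self-Rickartness is needed. Everything else is routine: finite biproducts of injectives are injective, direct summands of injectives are injective, Lemma~\ref{l:kc} converts between $F$-splitness and kernels being sections, and part~(2) is handled by the duality principle.
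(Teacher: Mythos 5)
Your proof is correct, and it follows the same skeleton as the paper: the cycle (i)$\Rightarrow$(ii)$\Rightarrow$(iii)$\Rightarrow$(v) and (ii)$\Rightarrow$(iv)$\Rightarrow$(v) handled by specialization, the reduction of (v) to the case $F=0$ (so that every injective object is self-Rickart), and part (2) by the duality principle. The only divergence is at the final step (v)$\Rightarrow$(i): the paper disposes of it in one line by citing \cite[Theorem~4.1]{C-K}, which states precisely that an abelian category with enough injectives in which every injective object is self-Rickart is spectral, whereas you reprove that fact from scratch via the endomorphism $\theta=\left[\begin{smallmatrix} 0&0\\ \delta&0 \end{smallmatrix}\right]$ of $E_0\oplus E_1$ with $\mathrm{Ker}(\theta)=X\oplus E_1$; this is essentially the standard argument underlying the cited theorem, and it checks out (the kernel computation, injectivity of finite biproducts and of direct summands, and the passage from ``every object injective'' to ``every short exact sequence splits'' are all valid in any abelian category with enough injectives). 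So your version is self-contained where the paper's is shorter by outsourcing the Rickart-to-spectral step to the earlier Crivei--K\"or result; mathematically the content is the same.
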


\begin{proof} (1) (i)$\Rightarrow$(ii)$\Rightarrow$(iii)$\Rightarrow$(v) and (ii)$\Rightarrow$(iv)$\Rightarrow$(v)
are clear. 

(v)$\Rightarrow$(i) Assume that $\mathcal{A}$ has enough injectives and every injective object $N$ of $\mathcal{A}$ is self-$F$-split 
for every fully invariant subobject $F$ of $N$. Taking $F=0$, it follows that 
every injective object $N$ of $\mathcal{A}$ is self-Rickart, hence $\mathcal{A}$ is spectral by \cite[Theorem~4.1]{C-K}.
\end{proof}

The category ${\rm Mod}(R)$ is a locally finitely generated (i.e., it has a family of finitely generated generators) 
Grothendieck category with enough injectives and enough projectives. 
It is spectral if and only if $R$ is semisimple \cite[Chapter~V, Proposition~6.7]{S}. 

\begin{cor} \label{c:semis} Let $R$ be a unitary ring. Then the following are equivalent:
\begin{enumerate}[(i)]
\item $R$ is semisimple.
\item Every right $R$-module $N$ is (strongly) self-$F$-split for every fully invariant submodule $F$ of $N$.
\item Every injective right $R$-module $N$ is (strongly) self-$F$-split for every fully invariant submodule $F$ of $N$.
\item Every right $R$-module $N$ is dual (strongly) self-$F$-split for every fully invariant submodule $F$ of $N$.
\item Every projective right $R$-module $N$ is dual (strongly) self-$F$-split for every fully invariant submodule $F$ of $N$.
\end{enumerate}
\end{cor}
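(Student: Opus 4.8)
The plan is to read Corollary~\ref{c:semis} off from Theorem~\ref{t:spectralR} applied to $\mathcal{A}=\Mod(R)$, together with the two standard facts recorded just before the statement: $\Mod(R)$ has enough injectives and enough projectives, and $\Mod(R)$ is spectral if and only if $R$ is semisimple \cite[Chapter~V, Proposition~6.7]{S}. Because the ``enough injectives'' and ``enough projectives'' clauses are automatic here, conditions~(ii),~(iii),~(iv),~(v) of the corollary, in their non-strong form, are literally conditions~(1)(iii),~(1)(v),~(2)(iii),~(2)(v) of Theorem~\ref{t:spectralR}. Since that theorem asserts each of these is equivalent to ``$\mathcal{A}$ is spectral'', and ``$\Mod(R)$ is spectral'' is equivalent to ``$R$ is semisimple'', the non-strong equivalences (i)$\Leftrightarrow$(ii)$\Leftrightarrow$(iii)$\Leftrightarrow$(iv)$\Leftrightarrow$(v) follow at once, with no new argument required.

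For the strong versions I would keep the same backbone but cannot invoke Theorem~\ref{t:spectralR} directly, since it is stated only for the non-strong notions. For the forward direction I would argue: if $R$ is semisimple then $\Mod(R)$ is spectral, so every short exact sequence splits; hence for a module $N$ and a fully invariant submodule $F$ one gets $N\cong F\oplus C$ with $C=N/F$, and by Theorem~\ref{t:key} it suffices to show $C$ is strongly self-Rickart (dually, that $F$ is dual strongly self-Rickart). Here one would use the Wedderburn structure of modules over a semisimple ring together with the explicit description of fully invariant submodules, combined with the criterion that a self-Rickart object with abelian endomorphism ring is strongly self-Rickart \cite[Proposition~2.14]{CO1}. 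For the converse, Remark~\ref{r:Rickart} reduces the strong conditions, with a suitable choice of $F$ (namely $F=0$ for the self-$F$-split conditions and $F=N$ for the dual ones), to ``every injective (resp.\ projective) module is strongly self-Rickart'', which in particular forces the corresponding module to be self-Rickart, hence $\Mod(R)$ spectral and $R$ semisimple.

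The routine part is the non-strong case, which is just an unwinding of Theorem~\ref{t:spectralR} and the spectral$=$semisimple dictionary. The main obstacle is the forward implication in the strong case: one must control, for a semisimple ring $R$, the endomorphism rings of the quotients $C=N/F$ (and of the submodules $F$) arising from fully invariant short exact sequences in order to conclude strong self-Rickartness. This is the step I would scrutinize most carefully, since it is where the argument genuinely departs from the non-strong one.
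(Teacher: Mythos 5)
Your treatment of the non-strong equivalences coincides with the paper's: since ${\rm Mod}(R)$ has enough injectives and enough projectives and is spectral precisely when $R$ is semisimple \cite[Chapter~V, Proposition~6.7]{S}, conditions (ii)--(v) without the word ``strongly'' are literally instances of Theorem \ref{t:spectralR}, and this is exactly how the paper disposes of that part (together with duality for (iv) and (v)). Your backward strong implications are also the paper's: specialize to $F=0$ (resp.\ $F=N$), use Remark \ref{r:Rickart} to drop ``strongly'', and invoke Theorem \ref{t:spectralR} again.

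The genuine gap is the forward strong implication (i)$\Rightarrow$(ii) (and its dual), which you only outline: saying one ``would use the Wedderburn structure together with the explicit description of fully invariant submodules and \cite[Proposition~2.14]{CO1}'' is a plan, not an argument, and you yourself flag it as the unresolved step. The paper fills this step quite differently: it asserts that over a semisimple ring every module is semisimple, hence every submodule is fully invariant, so every module is (weak) duo \cite{OHS}; combining this with self-Rickartness (already available from Theorem \ref{t:spectralR}) and \cite[Corollary~2.10]{CO1} it concludes that every module is strongly self-Rickart, and then Theorem \ref{t:key} gives strong self-$F$-splitness. Note moreover that the specific tool you propose cannot be made to work: by Theorem \ref{t:endab} (equivalently \cite[Proposition~2.14]{CO1}), strong self-$F$-splitness of $N$ forces ${\rm End}_R(N/F)$ to be abelian, yet for $N=S\oplus S$ with $S$ simple and $F=0$ one has ${\rm End}_R(N)\cong M_2(D)$ with $D={\rm End}_R(S)$, which has non-central idempotents; so along your route one would in fact conclude that such an $N$ is \emph{not} strongly self-Rickart. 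Your instinct to scrutinize exactly this step is therefore well placed (the same example, in which $0\oplus S$ is a direct summand of $S\oplus S$ that is not fully invariant, also puts pressure on the paper's ``every submodule is fully invariant'' claim), but as submitted your proposal contains no proof of the strong implication (i)$\Rightarrow$(ii), whereas the paper supplies one via duo modules; the proposal is incomplete at precisely that point.
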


\begin{proof} We only discuss the strong versions of the equivalences (i)$\Leftrightarrow$(ii)$\Leftrightarrow$(iii), 
the remaining part following by Theorem \ref{t:spectralR} and by duality.

(i)$\Rightarrow$(ii) Assume that $R$ is semisimple. Then every right $R$-module is semisimple, 
hence for every right $R$-module $N$, every submodule of $N$ is fully invariant. 
Thus every right $R$-module is (weak) duo \cite{OHS}. By Theorem \ref{t:spectralR}, 
every right $R$-module is self-$0$-split, i.e., self-Rickart. Then 
every right $R$-module is strongly self-Rickart \cite[Corollary~2.10]{CO1}. Now using Theorem \ref{t:key}, 
it follows that every right $R$-module $N$ is strongly self-$F$-split for every fully invariant submodule $F$ of $N$.

(ii)$\Rightarrow$(iii) This is clear.

(iii)$\Rightarrow$(i) This follows by Theorem \ref{t:spectralR}.
\end{proof}

Let ${}^C\mathcal{M}$ be the category of left comodules over a coalgebra $C$ over a field \cite{DNR}. 
Then ${}^C\mathcal{M}$ is a locally finite (i.e., it has a family of generators of finite length) Grothendieck category 
with enough injectives. It has enough projectives if and only if $C$ is left semiperfect
\cite[Theorem~3.2.3]{DNR}. It is spectral if and only if ${}^C\mathcal{M}$ is semisimple if and
only if $C$ is cosemisimple. Now one can deduce the following corollary similarly to Corollary \ref{c:semis}.

\begin{cor} Let $C$ be a coalgebra over a field. Then the following are equivalent:
\begin{enumerate}[(i)] 
\item $C$ is cosemisimple.
\item Every left $C$-comodule $N$ is (strongly) self-$F$-split for every fully invariant subcomodule $F$ of $N$.
\item Every injective left $C$-comodule $N$ is (strongly) self-$F$-split for every fully invariant subcomodule $F$ of $N$.
\item $C$ is left semiperfect and every left $C$-comodule $N$ is dual (strongly) self-$F$-split 
for every fully invariant subcomodule $F$ of $N$.
\item $C$ is left semiperfect and every projective left $C$-comodule $N$ is dual (strongly) self-$F$-split 
for every fully invariant subcomodule $F$ of $N$.
\end{enumerate}
\end{cor}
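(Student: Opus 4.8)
The plan is to argue exactly as in the proof of Corollary~\ref{c:semis}, with ${\rm Mod}(R)$ replaced by ${}^C\mathcal{M}$ and using the three structural facts recalled just above: ${}^C\mathcal{M}$ is a (locally finite) Grothendieck category with enough injectives; it has enough projectives if and only if $C$ is left semiperfect; and it is spectral if and only if it is semisimple, if and only if $C$ is cosemisimple. As in Corollary~\ref{c:semis}, it is enough to establish the strong versions of (i)$\Leftrightarrow$(ii)$\Leftrightarrow$(iii); the non-strong versions and the equivalences with (iv) and (v) then follow from Theorem~\ref{t:spectralR} and the duality principle.

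First I would prove (i)$\Rightarrow$(ii). Assuming $C$ cosemisimple, ${}^C\mathcal{M}$ is spectral, so every fully invariant short exact sequence $0\to F\to N\to N/F\to 0$ splits, whence $N\cong F\oplus (N/F)$; moreover, as in Corollary~\ref{c:semis} (via Theorem~\ref{t:spectralR} with $F=0$ together with \cite[Corollary~2.10]{CO1}), every left $C$-comodule is strongly self-Rickart. Theorem~\ref{t:key}(1) then yields that every $N$ is strongly self-$F$-split for every fully invariant subcomodule $F$ of $N$. The implication (ii)$\Rightarrow$(iii) is immediate. For (iii)$\Rightarrow$(i), taking $F=0$ shows that every injective left $C$-comodule is self-Rickart, so by \cite[Theorem~4.1]{C-K} the category ${}^C\mathcal{M}$ (which has enough injectives) is spectral, i.e.\ semisimple, i.e.\ $C$ is cosemisimple.

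It remains to bring in (iv) and (v). Here one notes that a cosemisimple coalgebra is automatically left semiperfect: ${}^C\mathcal{M}$ being semisimple, every comodule is projective, so ${}^C\mathcal{M}$ has enough projectives. Thus (i) implies the ``left semiperfect'' clause in (iv) and (v), and the remaining implications follow from Theorem~\ref{t:spectralR}(2) together with Theorem~\ref{t:key}(2) exactly as above: every comodule over a cosemisimple coalgebra is dual strongly self-Rickart, and the splitting $N\cong F\oplus(N/F)$ furnished by spectrality is precisely what Theorem~\ref{t:key}(2) requires. The only point needing care — and the main obstacle, such as it is — is the passage from ``self-Rickart for all objects'' to ``strongly self-Rickart for all objects'' in a spectral category, i.e.\ the correct use of the analogue of \cite[Corollary~2.10]{CO1}; everything else is a faithful transcription of the module-category proof, legitimate because ${}^C\mathcal{M}$ has all the ambient features (Grothendieck, enough injectives, enough projectives when $C$ is left semiperfect) on which that proof rests.
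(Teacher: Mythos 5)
Your proposal is correct and follows essentially the same route as the paper, which simply deduces the corollary ``similarly to Corollary~\ref{c:semis}'' using the facts recorded just before it about ${}^C\mathcal{M}$ (Grothendieck with enough injectives, enough projectives iff $C$ is left semiperfect, spectral iff $C$ is cosemisimple). Your explicit remark that cosemisimplicity forces left semiperfectness, so that (i) yields the projectivity hypotheses needed in (iv) and (v), is exactly the point implicit in the paper's one-line proof, and the rest (Theorem~\ref{t:spectralR}, Theorem~\ref{t:key}, \cite[Corollary~2.10]{CO1}, duality) matches the intended argument.
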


A Grothendieck category $\mathcal{A}$ is called a \emph{$V$-category} if every simple object of $\A$ is injective
\cite{DNV}, and a \emph{regular} category if every object $B$ of $\mathcal{A}$ is regular in the sense that every
short exact sequence of the form $0\to A\to B\to C\to 0$ is pure in $\mathcal{A}$ (i.e., every finitely presented object
is projective with respect to it) \cite[p.~313]{Wis}. 

\begin{theor} \label{t:VR} Let $\mathcal{A}$ be a locally finitely generated Grothendieck category. Then:
\begin{enumerate}
\item $\mathcal{A}$ is a $V$-category if and only if every finitely cogenerated (injective) object $N$ of $\mathcal{A}$ 
is self-$F$-split for every fully invariant subobject $F$ of $N$.
\item $\mathcal{A}$ is a regular category if and only if every finitely generated (projective) object $N$ of $\mathcal{A}$ 
is dual self-$F$-split for every finitely generated fully invariant subobject $F$ of $N$.
\end{enumerate}
\end{theor}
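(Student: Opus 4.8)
The plan is to deduce both equivalences from the corresponding characterizations of $V$-categories and of regular categories in terms of (dual) self-Rickart objects established in \cite{C-K}, transporting them along Theorem \ref{t:key} and its dual, which identify self-$F$-splitness of $N$ with the splitting of the fully invariant subobject $F$ together with self-Rickartness of the complement (dually, of $F$).

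For part (1), the reverse implications are the routine ones: if every finitely cogenerated object $N$ (or every finitely cogenerated injective object $N$) is self-$F$-split for every fully invariant subobject $F$, then taking $F=0$ and using Remark \ref{r:Rickart} shows that every such $N$ is self-Rickart, whence $\mathcal{A}$ is a $V$-category by the relevant characterization of $V$-categories in \cite{C-K}. For the forward implication I would argue as follows. Assume $\mathcal{A}$ is a $V$-category and let $N$ be finitely cogenerated; then $\Soc(N)$ is a finite coproduct of simple objects and is essential in $N$. Since $\mathcal{A}$ is a $V$-category, each of these simple objects is injective, so $\Soc(N)$ is injective, hence a direct summand of $N$, and being essential it equals $N$. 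Thus $N$ is semisimple of finite length, so every fully invariant subobject $F$ of $N$ is a direct summand, $N\cong F\oplus C$ with $C$ semisimple and therefore self-Rickart; Theorem \ref{t:key} then gives that $N$ is self-$F$-split. Since finitely cogenerated injective objects form a subclass of the finitely cogenerated ones, this also settles the parenthetical version.

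For part (2) one proceeds dually. The reverse implications follow by specializing to $F=N$ --- a finitely generated fully invariant subobject of $N$ --- and using Remark \ref{r:Rickart}, which reduces the hypothesis to: every finitely generated (resp. finitely generated projective) object is dual self-Rickart, forcing $\mathcal{A}$ to be regular by \cite{C-K}. For the forward implication, assume $\mathcal{A}$ is regular and let $F$ be a finitely generated fully invariant subobject of a finitely generated projective object $N$. Then $N$ is finitely presented, and since $F$ is finitely generated, $N/F$ is finitely presented as well; the short exact sequence $0\to F\to N\to N/F\to 0$, being pure because $N$ is a regular object, therefore splits. Hence $F$ is a direct summand of the projective object $N$, so $F$ is projective and thus dual self-Rickart by \cite{C-K}, and the dual of Theorem \ref{t:key} gives that $N$ is dual self-$F$-split. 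For a general finitely generated $N$ one argues similarly, now showing that every finitely generated subobject $g(F)=\Im(gi)$ arising from an endomorphism $g$ of $N$ is a direct summand of $N$, and concluding by Lemma \ref{l:kc}.

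The step I expect to be the main obstacle is the forward direction: pinning down exactly which finiteness hypotheses force the fully invariant subobject (or its images under endomorphisms) to split off, and verifying the two structural facts on which this rests --- that finitely cogenerated objects in a $V$-category are semisimple of finite length, and the finite-presentation/purity argument in a regular category (in particular, that finitely generated projective objects there are finitely presented and that the finitely presented objects behave well under the operations used) --- together with the ``complement is (dual) self-Rickart'' step, which relies on the stability of (dual) self-Rickartness under passage to direct summands and on the membership of the complement in the relevant class, as recorded in \cite{C-K} and \cite{Wis}.
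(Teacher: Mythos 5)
Part (1) and both reverse implications are fine and run essentially as in the paper (for the forward direction of (1) the paper quotes the existence of a semisimple cogenerator in a $V$-category, \cite[Theorem~2.3]{DNV}, instead of your socle argument, but both routes give that finitely cogenerated objects are semisimple, after which Theorem \ref{t:key} applies). The genuine gap is in the forward direction of part (2) for a general finitely generated, not necessarily projective, $N$. Your splitting mechanism is: the quotient is finitely presented, the sequence is pure, hence it splits. This needs $N$ to be finitely presented, which is why it works for finitely generated projective $N$ (epimorphisms onto $N$ split, so $N$ is finitely presented and then so is $N/F$). But a finitely generated object of a regular locally finitely generated Grothendieck category need not be finitely presented --- already over a von Neumann regular ring $R$ the module $R/I$ with $I$ not finitely generated is finitely generated and not finitely presented --- so for general finitely generated $N$ the quotient $N/\Im(gi)$ need not be finitely presented and purity alone does not yield the splitting. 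Your phrase ``one argues similarly'' therefore conceals exactly the key fact needed, namely that every finitely generated subobject of a regular object (here $\Im(gi)$, a quotient of the finitely generated $F$, inside $N$) is a direct summand. The paper obtains precisely this from \cite[37.4]{Wis}, checking that the proof carries over to locally finitely generated Grothendieck categories, and then concludes by Lemma \ref{l:kc}; you neither prove nor cite such a statement, so this step is missing.

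A smaller but real defect in your projective case: ``$F$ is a direct summand of the projective object $N$, so $F$ is projective and thus dual self-Rickart by \cite{C-K}'' is not a valid inference as written, since projectivity alone does not imply dual self-Rickartness ($\mathbb{Z}$ is projective over $\mathbb{Z}$ but the image of multiplication by $2$ is not a direct summand). What you actually need is that a finitely generated (projective) object of a \emph{regular} category is dual self-Rickart, i.e.\ the forward direction of the characterization \cite[Theorem~4.4]{C-K} that the paper uses for the converse, or again the fact from \cite[37.4]{Wis}. With that citation supplied, your purity argument gives a legitimate alternative treatment of the finitely generated projective case, differing from the paper's uniform argument (which never decomposes $N\cong F\oplus C$ but shows directly that ${\rm coker}(gi)$ is a retraction); for the general finitely generated case, however, you still need the Wisbauer-type splitting fact, at which point the paper's direct argument is both necessary and shorter.
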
 

\begin{proof} (1) If $\mathcal{A}$ is a $V$-category, then $\mathcal{A}$ has a semisimple cogenerator
\cite[Theorem~2.3]{DNV}. Hence every finitely cogenerated (injective) object $N$ of $\mathcal{A}$ is semisimple, 
and so $N$ is self-$F$-split for every fully invariant subobject $F$ of $N$. 

The remaining part of the proof follows by \cite[Theorem~4.4]{C-K}, using Remark \ref{r:Rickart}.

(2) We give its proof, since it is not completely dual to (1). 

Assume that $\mathcal{A}$ is a regular category. Let $N$ be a finitely generated object of $\A$, 
$F$ a finitely generated fully invariant subobject of $N$ and $g:N\to N$ a morphism in $\mathcal{A}$. 
Then ${\rm Im}(gi)$ is a finitely generated subobject of the regular object $N$, 
hence ${\rm Im}(gi)$ is a direct summand of $N$ by \cite[37.4]{Wis}, 
whose proof works in locally finitely generated Grothendieck categories. 
Hence ${\rm coker}(gi)$ is a retraction, and so $N$ is dual self-$F$-split by Lemma \ref{l:kc}.

The remaining part of the proof follows by \cite[Theorem~4.4]{C-K}, using Remark \ref{r:Rickart}.
\end{proof}

\begin{cor} \label{c:VRmod} Let $R$ be a unitary ring. Then:
\begin{enumerate} \item $R$ is a right $V$-ring if and only if every finitely cogenerated (injective) right $R$-module $N$ 
is self-$F$-split for every fully invariant submodule $F$ of $N$.
\item $R$ is a von Neumann regular ring if and only if every finitely generated (projective) right $R$-module $N$ 
is dual self-$F$-split for every finitely generated fully invariant submodule $F$ of $N$.
\end{enumerate}
\end{cor}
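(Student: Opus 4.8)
The plan is to obtain Corollary \ref{c:VRmod} as a direct specialization of Theorem \ref{t:VR} to the category $\mathcal{A}=\mathrm{Mod}(R)$ of right $R$-modules. First I would recall that $\mathrm{Mod}(R)$ is a locally finitely generated Grothendieck category, since the finitely generated free modules $R^n$ form a family of finitely generated generators; hence Theorem \ref{t:VR} applies verbatim with $\mathcal{A}=\mathrm{Mod}(R)$. I would also note that in $\mathrm{Mod}(R)$ the phrases ``finitely generated'', ``finitely cogenerated'', ``(pre)injective/projective object'' and ``fully invariant subobject'' carry their usual module-theoretic meaning, the last one by the discussion of fully invariant kernels in Section 2.

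Next I would translate the two ring-theoretic conditions into the corresponding categorical ones. A simple object of $\mathrm{Mod}(R)$ is exactly a simple right $R$-module, and injectivity in $\mathrm{Mod}(R)$ is ordinary module injectivity; therefore $\mathrm{Mod}(R)$ is a $V$-category in the sense of \cite{DNV} precisely when every simple right $R$-module is injective, i.e.\ precisely when $R$ is a right $V$-ring. This settles part (1): the stated equivalence is Theorem \ref{t:VR}(1) read in $\mathrm{Mod}(R)$, with ``$V$-category'' replaced by ``$V$-ring''. For part (2), I would invoke the classical fact that $\mathrm{Mod}(R)$ is a regular category in the sense of \cite{Wis} (every short exact sequence of modules is pure) if and only if $R$ is von Neumann regular; equivalently, $R$ is von Neumann regular iff every right $R$-module is flat iff every short exact sequence of right $R$-modules is pure. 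Under this identification, part (2) is exactly Theorem \ref{t:VR}(2) specialized to $\mathrm{Mod}(R)$.

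The only point that warrants a word of justification, and hence the closest thing to an obstacle, is the equivalence ``$\mathrm{Mod}(R)$ regular $\iff$ $R$ von Neumann regular''; everything else is a matter of unwinding definitions. Since this equivalence is standard, the corollary follows immediately, and I would present the argument in a couple of lines: apply Theorem \ref{t:VR} with $\mathcal{A}=\mathrm{Mod}(R)$ and substitute the two dictionary entries above.
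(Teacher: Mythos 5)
Your proposal is correct and follows essentially the same route as the paper: the paper's proof is precisely the observation that $\mathrm{Mod}(R)$ is a $V$-category if and only if $R$ is a right $V$-ring, and a regular category if and only if $R$ is von Neumann regular, followed by an application of Theorem \ref{t:VR}. Your extra remarks (that $\mathrm{Mod}(R)$ is locally finitely generated Grothendieck and the purity/flatness characterization of von Neumann regularity) just make explicit the standard facts the paper leaves implicit.
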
 

\begin{proof} Note that ${\rm Mod}(R)$ is a $V$-category if and only if $R$ is a right $V$-ring, and a regular
category if and only if $R$ is a von Neumann regular ring. Then use Theorem~\ref{t:VR}.
\end{proof}

\begin{remark} \rm If $R$ is a commutative unitary ring, then $V$-rings coincide with von Neumann regular rings
\cite[23.5]{Wis}, hence all statements from Corollary~\ref{c:VRmod} are equivalent. 
\end{remark}

\begin{cor} \label{c:VRcomod} Let $C$ be a coalgebra over a field. Then the following are equivalent:
\begin{enumerate}[(i)] 
\item $C$ is cosemisimple.
\item Every finitely cogenerated (injective) left $C$-comodule $N$ is self-$F$-split 
for every fully invariant subcomodule $F$ of $N$.
\item Every finitely generated (projective) left $C$-comodule is dual self-$F$-split
for every finitely generated fully invariant subcomodule $F$ of $N$.
\end{enumerate}
\end{cor}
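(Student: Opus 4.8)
The plan is to run the same strategy as in the proof of Corollary~\ref{c:VRmod}: identify condition (ii) with ``${}^C\mathcal{M}$ is a $V$-category'' and condition (iii) with ``${}^C\mathcal{M}$ is a regular category'' via Theorem~\ref{t:VR}, and then show that each of these categorical properties is equivalent to the cosemisimplicity of $C$. Recall that ${}^C\mathcal{M}$ is a locally finite, hence locally finitely generated and locally Noetherian, Grothendieck category with enough injectives \cite{DNR}, so Theorem~\ref{t:VR} applies verbatim and yields: ${}^C\mathcal{M}$ is a $V$-category if and only if (ii) holds, and ${}^C\mathcal{M}$ is a regular category if and only if (iii) holds. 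It thus remains to prove that (i) is equivalent to each of the two categorical conditions.

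For the $V$-category equivalence, if $C$ is cosemisimple then every left $C$-comodule is semisimple, so every simple comodule is a direct summand of any comodule containing it -- in particular of its injective envelope -- hence injective, and ${}^C\mathcal{M}$ is a $V$-category. For the converse, suppose every simple left $C$-comodule is injective. Since ${}^C\mathcal{M}$ is locally Noetherian, arbitrary coproducts of injective comodules are injective; hence for any comodule $N$ its socle $\Soc(N)$, being a coproduct of simple comodules, is injective and therefore a direct summand of $N$. On the other hand, in the locally finite category ${}^C\mathcal{M}$ every nonzero comodule contains a nonzero finite-dimensional, hence finite-length, subcomodule, so it has nonzero socle; consequently $\Soc(N)$ is essential in $N$. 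An essential direct summand is the whole object, so $N=\Soc(N)$ is semisimple. Thus every left $C$-comodule is semisimple and $C$ is cosemisimple. (Alternatively, one may quote the known characterization of cosemisimple coalgebras from \cite{DNV}.)

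For the regular-category equivalence, if $C$ is cosemisimple then every short exact sequence of left $C$-comodules splits, hence is pure, so ${}^C\mathcal{M}$ is a regular category. Conversely, assume ${}^C\mathcal{M}$ is regular, and let $N$ be a finite-length left $C$-comodule with a simple subcomodule $S$. The short exact sequence $0\to S\to N\to N/S\to 0$ is pure in ${}^C\mathcal{M}$, and $N/S$, being of finite length in a locally finite category, is finitely presented; hence $N/S$ is projective with respect to the epimorphism $N\to N/S$, so the identity of $N/S$ lifts and the sequence splits. Therefore every finite-length comodule is semisimple, and since in ${}^C\mathcal{M}$ every comodule is the sum of its finite-length subcomodules, every comodule is semisimple, i.e.\ $C$ is cosemisimple. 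Putting these together, (i)~$\Leftrightarrow$~``${}^C\mathcal{M}$ is a $V$-category''~$\Leftrightarrow$~(ii) by Theorem~\ref{t:VR}(1), and (i)~$\Leftrightarrow$~``${}^C\mathcal{M}$ is a regular category''~$\Leftrightarrow$~(iii) by Theorem~\ref{t:VR}(2).

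The main obstacle I anticipate is the converse implication in each equivalence, namely deriving cosemisimplicity of $C$ from the $V$-category, respectively regular-category, hypothesis: these implications genuinely fail for module categories (there are $V$-rings and von Neumann regular rings that are not semisimple), so it is essential to exploit the feature that distinguishes comodule categories, namely that ${}^C\mathcal{M}$ is locally finite. This enters through the essentiality of socles together with local Noetherianity in the $V$-category part, and through the identifications ``finitely generated $=$ finite length $=$ finitely presented'' in the regular-category part. Everything else is bookkeeping around Theorem~\ref{t:VR} and the fundamental theorem of comodules.
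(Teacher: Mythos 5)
Your proposal is correct and follows essentially the same route as the paper: both reduce (ii) and (iii) to the $V$-category and regular-category conditions via Theorem~\ref{t:VR} and then identify those with cosemisimplicity of $C$. The only difference is that the paper simply cites \cite[Proposition~2.3]{Wang} and the proof of \cite[Corollary~4.6]{C-K} for the equivalence ``$V$-category $\Leftrightarrow$ regular $\Leftrightarrow$ cosemisimple,'' whereas you prove it directly from local finiteness (essential socles plus local Noetherianity, and finite length $=$ finitely presented for the purity argument); your arguments are sound, so this is just a self-contained substitute for the citations.
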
 

\begin{proof} Note that the comodule category ${}^C\mathcal{M}$ is a $V$-category if and only if 
${}^C\mathcal{M}$ is a regular category if and only if $C$ is cosemisimple by \cite[Proposition~2.3]{Wang}
and the proof of \cite[Corollary~4.6]{C-K}. Then use Theorem~\ref{t:VR}.
\end{proof}

Recall that an abelian category $\mathcal{A}$ is called \emph{(semi)hereditary} 
if every (finitely generated) subobject of a projective object is projective, 
and \emph{co(semi)hereditary} if every (finitely cogenerated) factor object of an injective object is injective. 

\begin{theor} \label{t:heredR} Let $\mathcal{A}$ be an abelian category.
\begin{enumerate}
\item Assume that $\mathcal{A}$ has enough projectives. Then $\mathcal{A}$ is (semi)hereditary if and only if 
every (finitely generated) projective object $N$ of $\mathcal{A}$ is self-$F$-split 
for every fully invariant subobject $F$ of $N$.
\item Assume that $\mathcal{A}$ has enough injectives. Then $\mathcal{A}$ is co(semi)hereditary if and only if
every (finitely cogenerated) injective object $N$ of $\mathcal{A}$ is dual self-$F$-split
for every fully invariant subobject $F$ of $N$.
\end{enumerate}
\end{theor}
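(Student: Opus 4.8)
The plan is to establish part (1) and obtain part (2) by the duality principle in abelian categories (interchanging projectives with injectives, subobjects with factor objects, sections with retractions, and self-$F$-splitness with dual self-$F$-splitness). Both implications in (1) will be reduced to the known characterization of (semi)hereditary abelian categories in terms of self-Rickartness of projective objects, \cite[Theorem~4.7]{C-K}, using the translation between $F$-split and Rickart notions furnished by Remark \ref{r:Rickart} and Theorem \ref{t:key}.

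For the implication ``$\Leftarrow$'', assume that every (finitely generated) projective object $N$ of $\A$ is self-$F$-split for every fully invariant subobject $F$ of $N$. Since $0$ is a fully invariant subobject of every object, specialising to $F=0$ and invoking Remark \ref{r:Rickart} shows that every (finitely generated) projective object of $\A$ is self-$0$-split, that is, self-Rickart. By \cite[Theorem~4.7]{C-K}, $\A$ is (semi)hereditary.

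For the implication ``$\Rightarrow$'', assume that $\A$ is (semi)hereditary, let $N$ be a (finitely generated) projective object of $\A$, and let $0\to F\stackrel{i}\to N\stackrel{d}\to C\to 0$ be a fully invariant short exact sequence. By Theorem \ref{t:key} it suffices to prove that $N\cong F\oplus C$ and that $C$ is self-Rickart. The second assertion is automatic once the first is known: $C$ is then a direct summand of the projective object $N$, hence projective (and finitely generated if $N$ is), so $C$ is self-Rickart by \cite[Theorem~4.7]{C-K} applied to $\A$. Thus the whole argument is concentrated in showing that the fully invariant subobject $F$ splits off in $N$, equivalently that $C=N/F$ is projective. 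Here the first step is that $F$, being a (finitely generated) subobject of a projective object, is itself projective by (semi)hereditariness; the second step must bring in the full invariance of $i$ — or, via Proposition \ref{p:fiprerad}, the description $F=r(N)$ for a preradical $r$ of $\A$ — together with the projectivity of $N$ and of $F$ to produce the splitting.

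The step I expect to be the genuine obstacle is exactly this last one: upgrading ``$F$ is projective'' to ``$F$ is a direct summand of $N$''. Mere projectivity of a subobject of a projective object does not suffice for this, so the full invariance of $F$ must be used in an essential way, and identifying the correct splitting argument is the heart of the proof. Once that lemma is available, the remaining assertions follow formally from Theorem \ref{t:key} and \cite[Theorem~4.7]{C-K}, and part (2) is obtained by dualising the whole argument.
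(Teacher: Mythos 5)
Your handling of the backward implication (specialise to $F=0$, use Remark \ref{r:Rickart} and \cite[Theorem~4.7]{C-K}) is exactly the paper's argument and is fine, as is the reduction of the forward implication via Theorem \ref{t:key} to showing that $F$ splits off in $N$. But that is precisely where your proposal stops, and the step you flagged and left open is a genuine gap --- indeed it cannot be closed. In ${\rm Mod}(\mathbb{Z})$, which is hereditary with enough projectives, take the finitely generated projective object $N=\mathbb{Z}$ and the fully invariant subobject $F=2\mathbb{Z}$ (every endomorphism of $\mathbb{Z}$ is multiplication by an integer, so it carries $2\mathbb{Z}$ into $2\mathbb{Z}$). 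Taking $g=1_N$ in Definition \ref{d:fsplit} (or invoking Theorem \ref{t:key}), self-$F$-splitness would force $2\mathbb{Z}$ to be a direct summand of $\mathbb{Z}$, i.e. $\mathbb{Z}\cong 2\mathbb{Z}\oplus \mathbb{Z}/2\mathbb{Z}$, which is false. So no use of full invariance, projectivity of $N$ and projectivity of $F$ can upgrade ``$F$ is projective'' to ``$F$ is a direct summand of $N$'' in this generality: the missing lemma at the heart of your outline is not merely hard, it is false, and with it the forward implication of the statement as written fails.

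For comparison, the paper's own proof of this direction fixes $g:N\to N$, asserts ``by Theorem \ref{t:key} we have $N\cong F\oplus C$'', deduces that $C$, hence ${\rm Im}(dg)\subseteq C$, is projective by (semi)hereditariness, and concludes by Lemma \ref{l:kc} that ${\rm Ker}(dg)$ is a direct summand. The appeal to Theorem \ref{t:key} there presupposes that $N$ is self-$F$-split, which is exactly what is being proved, so the paper is circular at the very point you identified as the obstacle. Everything else in the paper's computation (and in your outline) is correct once one assumes in addition that the fully invariant short exact sequence $0\to F\to N\to C\to 0$ splits --- for instance if one restricts to fully invariant direct summands $F$, or to preradicals $r$ for which $N/r(N)$ is projective; compare Theorem \ref{t:her}, where the splitting of $r(G)$ appears explicitly among the equivalent conditions. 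So the correct conclusion of your analysis is not that a clever splitting argument remains to be found, but that the theorem needs this splitting as an extra hypothesis; under that hypothesis your route and the paper's coincide.
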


\begin{proof} (1) Assume that $\mathcal{A}$ is (semi)hereditary. 
Let $N$ be a (finitely generated) projective object of $\A$, 
and $0\to F\stackrel{i}\to N\stackrel{d}\to C\to 0$ a fully invariant short exact sequence in $\A$. 
Let $g:N\to N$ be a morphism in $\mathcal{A}$. If $N$ is finitely generated, then so is ${\rm Im}(dg)$. 
By Theorem \ref{t:key} we have $N\cong F\oplus C$, hence $C$ is (finitely generated) projective. 
Since $\mathcal{A}$ is (semi)hereditary, ${\rm Im}(dg)\subseteq C$ is projective. 
Then ${\rm Ker}(dg)$ is a direct summand of $N$, and so $N$ is self-$F$-split by Lemma \ref{l:kc}. 

The remaining part of the proof follows by \cite[Theorem~4.7]{C-K}, using Remark \ref{r:Rickart}.
\end{proof}

\begin{cor} \label{c:her} Let $R$ be a unitary ring. Then the following are equivalent:
\begin{enumerate}[(i)]
\item $R$ is right hereditary.
\item Every projective right $R$-module $N$ is self-$F$-split for every fully invariant submodule $F$ of $N$.
\item Every injective right $R$-module $N$ is dual self-$F$-split for every fully invariant submodule $F$ of $N$.
\end{enumerate}
\end{cor}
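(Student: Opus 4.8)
The plan is to deduce both equivalences from Theorem \ref{t:heredR}, applied to the module category $\A=\Mod(R)$, which is an abelian category possessing both enough projectives and enough injectives.

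First I would record the two translations between ring-theoretic and categorical hereditariness. By definition, $R$ is right hereditary precisely when every submodule of a projective right $R$-module is projective, that is, precisely when $\Mod(R)$ is hereditary in the sense preceding Theorem \ref{t:heredR}. Separately, a classical homological fact (equivalent to $\mathrm{r.gl.dim}(R)\le 1$, characterised via injective dimension) asserts that $R$ is right hereditary if and only if every factor module of an injective right $R$-module is injective, i.e.\ if and only if $\Mod(R)$ is cohereditary. With these identifications in hand, the equivalence (i)$\Leftrightarrow$(ii) is exactly Theorem \ref{t:heredR}(1): since $\Mod(R)$ has enough projectives, $\Mod(R)$ is hereditary if and only if every projective right $R$-module $N$ is self-$F$-split for every fully invariant submodule $F$ of $N$. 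Likewise, (i)$\Leftrightarrow$(iii) follows from Theorem \ref{t:heredR}(2): since $\Mod(R)$ has enough injectives, $\Mod(R)$ is cohereditary if and only if every injective right $R$-module $N$ is dual self-$F$-split for every fully invariant submodule $F$ of $N$; combining this with the cohereditary translation above yields the statement.

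No genuine obstacle arises here; the result is a direct specialisation of Theorem \ref{t:heredR} to $\Mod(R)$. The only point that deserves to be spelled out is the invocation of the standard homological characterisation of right hereditary rings through cohereditariness of $\Mod(R)$, which is well known and is of the same flavour as the unnumbered remarks elsewhere in the paper matching categorical notions (spectral, $V$-category, regular) with their ring-theoretic counterparts.
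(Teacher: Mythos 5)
Your proposal is correct and coincides with the paper's own proof: the paper likewise notes that $\Mod(R)$ is (co)hereditary precisely when $R$ is right hereditary and then invokes Theorem \ref{t:heredR}. Your explicit mention of the classical characterisation of right hereditary rings via injectivity of factor modules of injectives is exactly the identification the paper leaves implicit.
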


\begin{proof} Note that ${\rm Mod}(R)$ is (co)hereditary if and only if the ring $R$ is right hereditary. 
Then use Theorem \ref{t:heredR}.
\end{proof}

\begin{cor} Let $C$ be a coalgebra over a field. Then:
\begin{enumerate}
\item If $C$ is left semiperfect, then $C$ is hereditary if and only if every projective left $C$-comodule $N$
is self-$F$-split for every fully invariant subcomodule $F$ of $N$.
\item $C$ is hereditary if and only if every injective left $C$-comodule $N$ is dual self-$F$-split
for every fully invariant subcomodule $F$ of $N$.
\end{enumerate}
\end{cor}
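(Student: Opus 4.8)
The statement to prove concerns a coalgebra $C$ over a field and asserts: (1) if $C$ is left semiperfect, then $C$ is hereditary iff every projective left $C$-comodule $N$ is self-$F$-split for every fully invariant subcomodule $F$ of $N$; (2) $C$ is hereditary iff every injective left $C$-comodule $N$ is dual self-$F$-split for every fully invariant subcomodule $F$ of $N$. The plan is to deduce both parts directly from Theorem \ref{t:heredR} by identifying the hereditariness conditions on the abelian category ${}^C\mathcal{M}$ with the corresponding ring-theoretic/coalgebra-theoretic notions, exactly as Corollary \ref{c:her} does for module categories.

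First I would recall the standard facts about the comodule category: ${}^C\mathcal{M}$ is a locally finite Grothendieck category, hence has enough injectives, and it has enough projectives precisely when $C$ is left semiperfect by \cite[Theorem~3.2.3]{DNR} (this fact is already recalled in the excerpt). For part (2), since ${}^C\mathcal{M}$ always has enough injectives, Theorem \ref{t:heredR}(2) applies unconditionally: ${}^C\mathcal{M}$ is cohereditary iff every injective left $C$-comodule $N$ is dual self-$F$-split for every fully invariant subcomodule $F$ of $N$. It then remains to observe that ${}^C\mathcal{M}$ is cohereditary iff $C$ is hereditary; this is the dual of the statement that ${}^C\mathcal{M}$ being hereditary corresponds to hereditariness of $C$, and it is standard that a coalgebra $C$ is called hereditary precisely when every quotient of an injective comodule is injective (equivalently ${\rm ext}^2$ vanishes), so the identification is immediate. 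For part (1), under the hypothesis that $C$ is left semiperfect, ${}^C\mathcal{M}$ has enough projectives, so Theorem \ref{t:heredR}(1) applies and gives that ${}^C\mathcal{M}$ is hereditary iff every projective left $C$-comodule $N$ is self-$F$-split for every fully invariant subcomodule $F$ of $N$; and again ${}^C\mathcal{M}$ hereditary is by definition the same as $C$ hereditary.

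The only genuinely substantive point — and the step I expect to require the most care — is reconciling the two characterizations of ``$C$ hereditary'': the one via ${}^C\mathcal{M}$ being hereditary (every subobject of a projective is projective) used in part (1), and the one via ${}^C\mathcal{M}$ being cohereditary (every quotient of an injective is injective) used in part (2). For module categories these coincide (both say ${\rm gl.dim}\leq 1$), and for comodule categories the cohereditary condition is the natural and intrinsic one since ${}^C\mathcal{M}$ always has enough injectives but not always enough projectives; when $C$ is left semiperfect both make sense and agree with the global-dimension statement. I would phrase the proof so that part (2) invokes only cohereditariness (no semiperfectness needed), while part (1) invokes hereditariness under the semiperfectness hypothesis, and cite the relevant homological-dimension equivalence (e.g.\ via \cite{DNR}) to justify that both notions deserve the name ``$C$ hereditary''. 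With that identification in hand, the corollary is an immediate specialization of Theorem \ref{t:heredR}.

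\begin{proof} Recall that the comodule category ${}^C\mathcal{M}$ is a Grothendieck category with enough injectives, and it has enough projectives if and only if $C$ is left semiperfect \cite[Theorem~3.2.3]{DNR}. Moreover, ${}^C\mathcal{M}$ is hereditary if and only if $C$ is hereditary, and ${}^C\mathcal{M}$ is cohereditary if and only if $C$ is hereditary.

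(1) Assume $C$ is left semiperfect, so that ${}^C\mathcal{M}$ has enough projectives. By Theorem \ref{t:heredR}(1), ${}^C\mathcal{M}$ is hereditary if and only if every projective left $C$-comodule $N$ is self-$F$-split for every fully invariant subcomodule $F$ of $N$. Since ${}^C\mathcal{M}$ is hereditary if and only if $C$ is hereditary, the conclusion follows.

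(2) Since ${}^C\mathcal{M}$ has enough injectives, Theorem \ref{t:heredR}(2) shows that ${}^C\mathcal{M}$ is cohereditary if and only if every injective left $C$-comodule $N$ is dual self-$F$-split for every fully invariant subcomodule $F$ of $N$. As ${}^C\mathcal{M}$ is cohereditary if and only if $C$ is hereditary, the conclusion follows.
\end{proof}
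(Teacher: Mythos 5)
Your proposal is correct and follows essentially the same route as the paper: both parts are immediate specializations of Theorem \ref{t:heredR}, using that ${}^C\mathcal{M}$ has enough injectives (and enough projectives exactly when $C$ is left semiperfect), together with the identification of (co)hereditariness of ${}^C\mathcal{M}$ with hereditariness of the coalgebra $C$. The only difference is that the paper supports this last identification by citing \cite{NTZ}, whereas you assert it as standard; adding that reference would make your argument complete.
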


\begin{proof} Note that the comodule category ${}^C\mathcal{M}$ is (co)hereditary if and only if 
$C$ is a (left and right) hereditary coalgebra \cite{NTZ}. Then use Theorem \ref{t:heredR}.
\end{proof}

Next we give some results in the case of abelian categories with a (projective) generator or an (injective) cogenerator. 
Recall that an object $M$ of an abelian category is called \emph{(semi)hereditary} if 
every (finitely generated) subobject of $M$ is projective, and 
\emph{co(semi)hereditary} if every (finitely cogenerated) factor object of $M$ is injective.

\begin{theor} \label{t:her} Let $\A$ be an abelian category, and let $r$ be a preradical of $\A$.
\begin{enumerate}
\item Assume that $\A$ has a generator $G$ and enough injectives, and $r$ is a radical. Then the following are equivalent:
\begin{enumerate}[(i)]
\item $G/r(G)$ is a (semi)hereditary object of $\A$.
\item $G=r(G)\oplus B$ for some (semi)hereditary object $B$ of $\A$.
\item For every (finite) direct sum $M=G^{(I)}$, $M=r(M)\oplus D$ for some (semi) hereditary object $D$ of $\A$.
\item For every (finitely generated) projective object $M$, $M=r(M)\oplus D$ for some (semi)hereditary object $D$ of $\A$.
\item $r(G)$ is a direct summand of $G$ and every (finitely generated) $r$-torsionfree projective object of $\A$ is (semi)hereditary.
\item $r(G)$ is a direct summand of $G$ and every (finitely generated) $r$-torsionfree projective object of $\A$ is self-Rickart.
\item Every (finite) direct sum $M=G^{(I)}$ is self-$r(M)$-split.
\item Every (finitely generated) projective object $M$ is self-$r(M)$-split.
\end{enumerate}
\item Assume that $\A$ has a cogenerator $G$ and enough projectives. Then the following are equivalent:
\begin{enumerate}[(i)]
\item $r(G)$ is a cosemihereditary object of $\A$.
\item $r(G)$ is a cosemihereditary direct summand of $G$.
\item For every finite direct sum $M=G^n$, $r(M)$ is a cosemihereditary direct summand of $M$.
\item For every finitely cogenerated injective object $M$, $r(M)$ is a cosemihereditary direct summand of $M$.
\item $r(G)$ is a direct summand of $G$ and every finitely cogenerated $r$-torsion injective object of $\A$ is cosemihereditary.
\item $r(G)$ is a direct summand of $G$ and every finitely cogenerated $r$-torsion injective object of $\A$ is dual self-Rickart.
\item Every finite direct sum $M=G^n$ is dual self-$r(M)$-split.
\item Every finitely cogenerated injective object $M$ is dual self-$r(M)$-split.
\end{enumerate}
\end{enumerate}
\end{theor}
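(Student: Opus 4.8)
The plan is to reduce each of the eight conditions, via Theorem~\ref{t:key}, to a statement of the shape ``the $r$-torsion splits off'' together with ``the $r$-torsionfree complement is self-Rickart'', and then to transport the Rickart/(semi)hereditary content back and forth by means of the characterisation of (semi)hereditary objects through self-Rickartness from \cite[Theorem~4.7]{C-K} (combined with Remark~\ref{r:Rickart}, which identifies self-Rickartness with self-$0$-splitness). The preliminary facts I would record first are: for every object $M$ the canonical sequence $0\to r(M)\to M\to M/r(M)\to 0$ is fully invariant (Proposition~\ref{p:fiprerad}); since $r$ is a radical, $r(M/r(M))=0$, so $M/r(M)$ is $r$-torsionfree, and whenever $M$ is projective with $r(M)$ a direct summand of $M$ the complement $M/r(M)$ is $r$-torsionfree \emph{and} projective; and $r$ commutes with coproducts (\cite[I.1.2]{BKN}), so $r(G^{(I)})\cong r(G)^{(I)}$ and $G^{(I)}/r(G^{(I)})\cong(G/r(G))^{(I)}$. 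Feeding these into Theorem~\ref{t:key} yields that $M$ is self-$r(M)$-split if and only if $r(M)$ is a direct summand of $M$ and $M/r(M)$ is self-Rickart; read for $M=G^{(I)}$ this turns (vii) into ``$r(G)$ is a direct summand of $G$ and $(G/r(G))^{(I)}$ is self-Rickart for all (finite) $I$'', and read for projective $M$ it turns (viii) into the analogous statement for (finitely generated) projectives.

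Next I would assemble the implications, which come in three flavours. The trivial ones: (ii)$\Rightarrow$(i) and (iii)$\Rightarrow$(ii) (the latter by taking $I$ a singleton). The \emph{structural} ones, using the splitting together with the coproduct behaviour of $r$: (ii)$\Rightarrow$(iii) from $G^{(I)}=r(G)^{(I)}\oplus B^{(I)}=r(G^{(I)})\oplus B^{(I)}$ and the classical fact that a (finite) coproduct of (semi)hereditary objects is again (semi)hereditary; (iii)$\Leftrightarrow$(iv), since a (finitely generated) projective is a direct summand of a (finite) $G^{(I)}=r(G^{(I)})\oplus D$ — whence, by the modular law, $M=r(M)\oplus(M\cap D)$ with $M\cap D$ a subobject of the (semi)hereditary $D$, hence (semi)hereditary — while conversely such $G^{(I)}$ is itself projective, so (iv) applies to it directly; (iii)$\Rightarrow$(v) by the same modular-law argument at an $r$-torsionfree projective, which then equals $M\cap D$; and (vii)$\Rightarrow$(viii) by Corollary~\ref{c:strel}, the inclusion $r(M)=r(G^{(I)})\cap M\hookrightarrow M$ being fully invariant by Corollary~\ref{c:Fds}. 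Finally the \emph{Rickart} ones, invoking \cite[Theorem~4.7]{C-K} together with the elementary observation that a (finitely generated) (semi)hereditary object $M$ is self-Rickart (for an endomorphism $g$, ${\rm Im}(g)$ is a (finitely generated) subobject of $M$, hence projective, so $0\to{\rm Ker}(g)\to M\to{\rm Im}(g)\to 0$ splits): (v)$\Leftrightarrow$(vi); (vi)$\Rightarrow$(vii), because $B^{(I)}\cong(G/r(G))^{(I)}$ is an $r$-torsionfree projective, hence self-Rickart (by \cite[Theorem~4.7]{C-K}, or by the coproduct bookkeeping underlying Theorem~\ref{t:homzero}), so $G^{(I)}$ is self-$r(G^{(I)})$-split by the first paragraph; and (viii)$\Rightarrow$(iv) by the first paragraph read backwards, using the self-Rickart$\,\Rightarrow\,$(semi)hereditary half of \cite[Theorem~4.7]{C-K}. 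The loop is then closed by (v)$\Rightarrow$(i): condition (v) exhibits $G=r(G)\oplus(G/r(G))$ with $G/r(G)$ an $r$-torsionfree projective that is (semi)hereditary, which is exactly (i).

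The one genuinely delicate step is (i)$\Rightarrow$(ii), i.e.\ extracting the splitting $G=r(G)\oplus(G/r(G))$ from the (semi)hereditariness of $G/r(G)$. In the ``hereditary'' reading this is free: a hereditary object is in particular projective (take the subobject to be the whole object), so $0\to r(G)\to G\to G/r(G)\to 0$ splits and $B:=G/r(G)$ is hereditary. In the ``semihereditary'' reading one cannot split so cheaply, and here I expect to lean on ``enough injectives'' and the precise statement of \cite[Theorem~4.7]{C-K} applied with the generator $G$; this step, together with the finitely-generated bookkeeping in (iii)$\Rightarrow$(v) and (vi)$\Rightarrow$(vii) — reducing an arbitrary finitely generated subobject of a $G^{(I)}$ to one lying in a finite subcoproduct — and the fact that one repeatedly uses that $G/r(G)$ is projective (which the hypotheses on $G$ supply), is where the proof wants care rather than routine manipulation.

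Part~(2) I would prove dually: replace the generator $G$ by the (injective) cogenerator, coproducts by products, ``enough injectives'' by ``enough projectives'', (semi)hereditary objects by cosemihereditary objects, self-Rickart by dual self-Rickart, the modular law for subobjects by its dual for factor objects, and Theorem~\ref{t:key}(1) by Theorem~\ref{t:key}(2). As in the proof of Theorem~\ref{t:VR}(2), the only step that is not formally dual — that a cosemihereditary object has the relevant factor objects splitting off as retractions — must be verified directly rather than quoted, and because the arbitrary-product analogue of the ``hereditary'' argument fails, part~(2) carries only the ``finite'' and ``finitely cogenerated'' variants.
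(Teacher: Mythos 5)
Your skeleton (reduce everything through Theorem~\ref{t:key} and chain the eight conditions) matches the paper, but the implication that carries the real content is missing. Your only route back from the splitness conditions to the (semi)hereditariness conditions is (viii)$\Rightarrow$(iv), which you justify by applying ``the self-Rickart $\Rightarrow$ (semi)hereditary half of \cite[Theorem~4.7]{C-K}'' to the torsionfree complement $D$ of each projective $M=r(M)\oplus D$. That theorem (as used elsewhere in the paper, e.g.\ in Theorem~\ref{t:heredR}) is a characterisation of (semi)hereditary \emph{categories} with enough projectives; it is not an object-wise implication, and object-wise it is false: $R=k[x,y]$ is projective and self-Rickart as a module over itself ($\mathrm{End}_R(R)\cong R$ is a domain), yet the ideal $(x,y)$ is finitely generated and not projective, so $R$ is not a semihereditary object. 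A category-level invocation is also unavailable here: $\A$ is not claimed to be (semi)hereditary (only its $r$-torsionfree projectives are), and part (1) does not even assume enough projectives. This is precisely where the paper's proof of (viii)$\Rightarrow$(i) works: a (finitely generated) subobject $A$ of $B$ (with $G=r(G)\oplus B$) is written as $M/K$ with $M=G^{(I)}$; since $r$ is a radical, $A$ is $r$-torsionfree, so the composite $r(M)\to M\to M/K\cong A\to M$ vanishes and the pullback of $r(M)\to M$ along this endomorphism of $M$ is exactly $K$; self-$r(M)$-splitness of $M$ then makes $K$ a direct summand, so $A$ is projective. Without this (or an equivalent) argument your cycle never returns from (vii)/(viii) to (i)--(v).

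Two further points. The ``modular law'' step you use for (iii)$\Rightarrow$(iv) and (iii)$\Rightarrow$(v) --- that a direct summand $M$ of $F=r(F)\oplus D$ satisfies $M=r(M)\oplus(M\cap D)$ --- is false: take $F=\mathbb{Z}\oplus\mathbb{Z}/2\mathbb{Z}$, $r$ the torsion preradical, $D=\mathbb{Z}\oplus 0$, and $M$ the graph of the reduction $\mathbb{Z}\to\mathbb{Z}/2\mathbb{Z}$, which is a direct summand of $F$ with $r(M)\oplus(M\cap D)=2\mathbb{Z}\oplus 0\subsetneq M$. The conclusions of (iv) and (v) are still true, but they need the paper's argument: $r(F)$ is a \emph{fully invariant} direct summand, so $r(F)=r(M)\oplus r(N)$ and $F/r(F)\cong M/r(M)\oplus N/r(N)$ (Proposition~\ref{p:fids}), and the composite of the projections $F\to r(F)$ and $F\to M$ restricts to a retraction of $r(M)\to M$; torsionfree projectives are then isomorphic to direct summands of $D$, not equal to $M\cap D$. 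Finally, you explicitly leave (i)$\Rightarrow$(ii) open in the semihereditary reading; the paper disposes of it by asserting that $G/r(G)$ is projective (immediate only in the hereditary reading), so in any complete write-up this step, like the closing pullback argument, has to be supplied rather than cited.
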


\begin{proof} (1) (i)$\Rightarrow$(ii) Assume that $G/r(G)$ is a (semi)hereditary object of $\A$. 
It follows that $G/r(G)$ is projective, and so $G=r(G)\oplus B$ with $B\cong G/r(G)$ (semi)hereditary.

(ii)$\Rightarrow$(iii) Assume that $G=r(G)\oplus B$ for some (semi)hereditary object $B$ of $\A$. 
Let $M=G^{(I)}$ for some (finite) set $I$. Then $M=r(G)^{(I)}\oplus B^{(I)}=r(M)\oplus B^{(I)}$. 
Since $B$ is (semi)hereditary, so is $B^{(I)}$ by \cite[39.3, 39.7]{Wis}, whose proofs work in abelian categories 
with enough injectives. 

(iii)$\Rightarrow$(iv) Assume that (iii) holds. Let $M$ be a (finitely generated) projective object of $\A$. 
Then $M$ is a direct summand of a (finite) direct sum $F=G^{(I)}$, say $F=M\oplus N$. 
It follows that $r(F)=r(M)\oplus r(N)$ and $F/r(F)\cong M/r(M)\oplus N/r(N)$. 
Since $F/r(F)$ is (semi)hereditary, so is $M/r(M)$. Since $r(F)$ is a direct summand of $F$, 
$r(M)$ must be a direct summand of $M$. 

(iv)$\Rightarrow$(v) This is clear.

(v)$\Rightarrow$(vi) Assume that (v) holds. Let $M$ be a (finitely generated) $r$-torsionfree projective object of $\A$.
Let $f:M\to M$ be a morphism in $\A$. Since $M$ is (semi)hereditary, $M/{\rm Ker}(f)\cong {\rm Im}(f)$ is projective,
and so ${\rm Ker}(f)$ is a direct summand of $M$. Hence $M$ is self-Rickart. 

(vi)$\Rightarrow$(vii) Assume that (vi) holds. Let $M=G^{(I)}$ be a (finite) direct sum. 
Since $G=r(G)\oplus B$, it follows that $M=r(M)\oplus B^{(I)}$. Since 
the (finitely generated) $r$-torsionfree projective object $B^{(I)}$ is self-Rickart, 
$M$ is self-$r(M)$-split by Theorem \ref{t:key}.

(vii)$\Rightarrow$(viii) This follows by Corollary \ref{c:strel}.

(viii)$\Rightarrow$(i) Assume that (viii) holds. By Theorem \ref{t:key}, $G=r(G)\oplus B$ 
for some self-Rickart object $B$ of $\A$. We show that $B$ is (semi)hereditary. To this end,
let $A$ be a (finitely generated) subobject of $B$. 
Then $A\cong M/K$ for some (finite) direct sum $M=G^{(I)}$ and subobject $K$ of $M$. 
Denote by $\varphi:A\to M/K$ an isomorphism, by $p:M\to M/K$ the natural epimorphism, 
by $k:A\to M$ and $i:r(M)\to M$ the inclusion monomorphisms and $g=k\varphi p:M\to M$. 
Consider the pullback of $i$ and $g$ in order to get the following commutative square:
$$\SelectTips{cm}{}
\xymatrix{
P \ar[r]^-{j} \ar[d]_{f} & M \ar[d]^{g} \\
r(M) \ar[r]_-{i} & M }$$
Since $r$ is a radical, $B\cong G/r(G)$ is an $r$-torsionfree object, hence so is $A$. 
This implies that $if=gj=0$. Then $pj=0$, and so there exists a unique morphism $\alpha:P\to K$ such that $k\alpha=j$, 
which implies that $\alpha$ is a monomorphism. 
The pullback property yields a unique morphism $\beta:K\to P$ such that $f\beta=0$ and $j\beta=k$. 
Then $k\alpha\beta=k$, hence $\alpha\beta=1_K$, which shows that $P\cong K$. 
Since $M$ is projective, it is self-$r(M)$-split. Then $j:P\to M$ is a section, and so 
$A$ is projective. Hence $G/r(G)\cong B$ is (semi)hereditary.

(2) In general only the semihereditary case from (1) may be dualized, because 
$r$ may not preserve products (unless it is superhereditary), and an arbitrary product of 
cohereditary objects may not be cohereditary (see \cite[18.2]{Wis} and the proofs of \cite[39.3, 39.6, 39.7]{Wis}).
\end{proof}

\begin{cor} Let $r$ be a radical of ${\rm Mod}(R)$. Then the following are equivalent:
\begin{enumerate}[(i)]
\item $R/r(R)$ is a (semi)hereditary right $R$-module.
\item $R=r(R)\oplus B$ for some (semi)hereditary right ideal $B$ of $R$.
\item For every (finitely generated) free right $R$-module $M$, $M=r(M)\oplus D$ for some (semi) hereditary right $R$-module $D$.
\item For every (finitely generated) projective right $R$-module $M$, $M=r(M)\oplus D$ for some (semi)hereditary right $R$-module $D$.
\item $r(R)$ is a direct summand of $R$ and every (finitely generated) $r$-torsionfree projective right $R$-module is (semi)hereditary.
\item $r(R)$ is a direct summand of $R$ and every (finitely generated) $r$-torsionfree projective right $R$-module is self-Rickart.
\item Every (finitely generated) free right $R$-module $M$ is self-$r(M)$-split.
\item Every (finitely generated) projective right $R$-module $M$ is self-$r(M)$-split.
\end{enumerate}
\end{cor}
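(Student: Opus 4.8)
The plan is to derive this corollary as the module-category instance of Theorem~\ref{t:her}(1), applied to the abelian category $\mathcal{A}={\rm Mod}(R)$ with the distinguished generator $G=R_R$. First I would verify that the hypotheses of Theorem~\ref{t:her}(1) are met: ${\rm Mod}(R)$ has enough injectives (indeed it has an injective cogenerator), $R_R$ is a generator of ${\rm Mod}(R)$, and $r$ is a radical by assumption. Hence Theorem~\ref{t:her}(1) applies and delivers the equivalence of its eight conditions for $\mathcal{A}={\rm Mod}(R)$ and $G=R_R$.

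Next I would set up the dictionary translating the categorical conditions (i)--(viii) of Theorem~\ref{t:her}(1) into the ring-theoretic conditions (i)--(viii) of the corollary. The essential points are: a direct sum $G^{(I)}=R_R^{(I)}$ is exactly a free right $R$-module (and a finite such sum is a finitely generated free module), so the theorem's conditions (iii) and (vii) become the statements about (finitely generated) free modules here; the projective objects of ${\rm Mod}(R)$ are precisely the projective right $R$-modules, so conditions (iv), (v), (vi) and (viii) transfer verbatim; and an object of ${\rm Mod}(R)$ is (semi)hereditary, respectively self-Rickart, respectively $r$-torsionfree, in the categorical sense exactly when the corresponding module has that property in the usual module-theoretic sense. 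Under this dictionary the eight conditions of the corollary coincide with the eight conditions of Theorem~\ref{t:her}(1), and the equivalence follows.

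I do not expect a genuine obstacle here. The parenthetical \emph{semihereditary} versus \emph{hereditary} variants are handled uniformly inside Theorem~\ref{t:her}(1), so no separate bookkeeping is needed; and since the notions of projective object, free object and $r$-torsionfree object in ${\rm Mod}(R)$ agree with their classical counterparts, the translation above is routine. I would simply record the dictionary and invoke Theorem~\ref{t:her}(1).
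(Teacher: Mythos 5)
Your proposal is correct and coincides with the paper's own proof, which simply invokes Theorem~\ref{t:her}(1) for $\mathcal{A}={\rm Mod}(R)$ with generator $G=R_R$; the translation of free, projective, (semi)hereditary, self-Rickart and $r$-torsionfree between the categorical and module-theoretic settings is exactly the routine dictionary you describe.
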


\begin{proof} This follows by Theorem \ref{t:her}, noting that $R$ is a generator of ${\rm Mod}(R)$.
\end{proof}

\begin{cor} Let $C$ be left semiperfect and let $r$ be a preradical of ${}^C\mathcal{M}$. Then the following are equivalent:
\begin{enumerate}[(i)]
\item $r(C)$ is a cosemihereditary left $C$-comodule.
\item $r(C)$ is a cosemihereditary direct summand of $C$.
\item For every finitely cogenerated free left $C$-comodule $M$, $r(M)$ is a cosemihereditary direct summand of $M$.
\item For every finitely cogenerated injective left $C$-comodule $M$, $r(M)$ is a cosemihereditary direct summand of $M$.
\item $r(C)$ is a direct summand of $C$ and every finitely cogenerated $r$-torsion injective left $C$-comodule is cosemihereditary.
\item $r(C)$ is a direct summand of $C$ and every finitely cogenerated $r$-torsion injective left $C$-comodule is dual self-Rickart.
\item Every finitely cogenerated free left $C$-comodule $M$ is dual self-$r(M)$-split.
\item Every finitely cogenerated injective left $C$-comodule $M$ is dual self-$r(M)$-split.
\end{enumerate}
\end{cor}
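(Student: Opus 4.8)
The plan is to obtain this corollary as a direct application of Theorem~\ref{t:her}(2) to the comodule category $\A={}^C\mathcal{M}$ with the cogenerator $G=C$. The first thing to check is that the hypotheses of Theorem~\ref{t:her}(2) are met: $C$ is an injective cogenerator of ${}^C\mathcal{M}$, and ${}^C\mathcal{M}$ has enough projectives precisely because $C$ is assumed left semiperfect, by \cite[Theorem~3.2.3]{DNR}. Hence Theorem~\ref{t:her}(2) applies and yields the equivalence of its eight conditions for $\A={}^C\mathcal{M}$ and $G=C$.

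It then remains to recognise each of those eight conditions as the correspondingly numbered condition in the statement. Conditions (i), (ii), (iv), (v), (vi) and (viii) translate verbatim, upon replacing ``object of $\A$'' by ``left $C$-comodule'', ``$r(G)$'' by ``$r(C)$'', ``cosemihereditary object of $\A$'' by ``cosemihereditary left $C$-comodule'', and ``injective object'' by ``injective left $C$-comodule''. For conditions (iii) and (vii) one has to match the finite direct sums $M=G^n=C^n$ occurring in Theorem~\ref{t:her}(2) with the ``finitely cogenerated free left $C$-comodules'' appearing here; for this it is convenient to use that $C$ is left semiperfect, so that the injective envelope of a finite-dimensional comodule is again finite-dimensional and the finiteness conditions in play behave as in the module case.

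The proof involves no genuine obstacle beyond this dictionary; the only point deserving a line of care is verifying that ``cosemihereditary object of ${}^C\mathcal{M}$'' coincides with the intended comodule notion of ``cosemihereditary left $C$-comodule'' and that the free comodules $C^n$ are the correct replacement for the objects $G^n$ of Theorem~\ref{t:her}(2). Once this is settled, the chain of equivalences (i)$\Leftrightarrow\dots\Leftrightarrow$(viii) is immediate.
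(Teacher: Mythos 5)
Your proposal is correct and is essentially the paper's own proof: the corollary is obtained by applying Theorem~\ref{t:her}(2) with $\A={}^C\mathcal{M}$ and the cogenerator $G=C$, using that ${}^C\mathcal{M}$ has enough projectives exactly when $C$ is left semiperfect. The only cosmetic difference is your extra caution about matching ``finitely cogenerated free'' comodules with the objects $C^n$; this is just the paper's terminology for finite direct sums of copies of $C$ and does not actually require the semiperfectness hypothesis.
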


\begin{proof} This follows by Theorem \ref{t:her}, noting that $C$ is a cogenerator of ${}^C\mathcal{M}$, 
and ${}^C\mathcal{M}$ has enough projectives if and only if $C$ is left semiperfect. 
\end{proof}

\begin{theor} \label{t:r} Let $\A$ be an abelian category and let $r$ be a preradical of $\A$.
\begin{enumerate}
\item Assume that $\A$ has an injective cogenerator $G$. Then the following are equivalent:
\begin{enumerate}[(i)]
\item $G$ is a (strongly) self-$r(G)$-split object of $\A$.
\item $G=r(G)\oplus D$ for some object $D$ of $\A$ and ${\rm End}_{\A}(D)$ is a von Neumann regular (strongly regular) ring.
\item Every finite direct product $M=G^n$ is (strongly) self-$r(M)$-split.
\item Every finitely cogenerated injective object $M$ of $\A$ is (strongly) self-$r(M)$-split.
\end{enumerate}
\item Assume that $\A$ has a projective generator $G$. Then the following are equivalent:
\begin{enumerate}[(i)]
\item $G$ is a dual (strongly) self-$r(G)$-split object.
\item $r(G)$ is a direct summand of $G$ and ${\rm End}_{\A}(r(G))$ is a von Neumann regular (strongly regular) ring.
\item Every finite direct sum $M=G^n$ is dual (strongly) self-$r(M)$-split.
\item Every finitely generated projective object $M$ of $\A$ is dual (strongly) self-$r(M)$-split.
\end{enumerate}
\end{enumerate}
\end{theor}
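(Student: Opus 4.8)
\noindent\emph{Proof proposal.} Part (2) is obtained from part (1) by the duality principle, applying part (1) in the opposite category $\A^{\op}$ equipped with the preradical $r^{-1}$: there an injective cogenerator becomes a projective generator, a finitely cogenerated object becomes a finitely generated one, (strongly) self-$r^{-1}(G)$-splitness translates into dual (strongly) self-$r(G)$-splitness, and von Neumann regularity and strong regularity of an endomorphism ring are preserved on passing to the opposite ring (with $\End_{\A}(r(G))$ playing, in this translation, the role of $\End_{\A}(D)$). So I focus on part (1), organised around the core equivalence (i)$\Leftrightarrow$(ii), from which (iii) and (iv) are then deduced.

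\emph{(i)$\Leftrightarrow$(ii).} By Theorem \ref{t:key}(1), $G$ is self-$r(G)$-split if and only if $G\cong r(G)\oplus D$ (necessarily $D\cong G/r(G)$) and $D$ is self-Rickart. The crucial observation is that $D$, being a direct summand of the injective object $G$, is itself injective, and that an injective object $D$ is self-Rickart if and only if $\End_{\A}(D)$ is von Neumann regular. Indeed, ``$\Leftarrow$'' holds for any object (from $fgf=f$ one gets the idempotent $e=gf$ with $\Ker(f)=(1-e)D$ a direct summand); for ``$\Rightarrow$'', self-Rickartness gives $\Ker(f)=(1-e)D$ for an idempotent $e$, so $f|_{eD}$ is a monomorphism and $\Im(f)\cong eD$ is a direct summand of the injective object $D$, hence injective, hence a direct summand of $D$ --- so $D$ is self-regular and $\End_{\A}(D)$ is von Neumann regular (this is in essence \cite{C-K}). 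This yields the plain equivalence. For the strong version, Theorem \ref{t:endab}(1) gives that $G$ is strongly self-$r(G)$-split if and only if $G$ is self-$r(G)$-split and $\End_{\A}(D)$ is abelian; combining this with the above and with the fact that a von Neumann regular ring is strongly regular precisely when it is abelian, one gets the strong equivalence (i)$\Leftrightarrow$(ii).

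\emph{(ii)$\Rightarrow$(iii)$\Rightarrow$(iv).} Assume (ii) and write $G\cong r(G)\oplus D$ with $\End_{\A}(D)$ von Neumann regular. Since $r$ is a preradical, $r(G^{n})=r(G)^{n}$, so $G^{n}\cong r(G^{n})\oplus D^{n}$ with $D^{n}$ a direct summand of the injective object $G^{n}$ --- hence injective --- and $\End_{\A}(D^{n})\cong M_{n}(\End_{\A}(D))$, a matrix ring over a von Neumann regular ring, hence again von Neumann regular; by Theorem \ref{t:key}, $G^{n}$ is self-$r(G^{n})$-split, which is (iii). Now a finitely cogenerated object embeds into a finite power $G^{n}$ of the cogenerator $G$, and if it is injective this embedding splits; so any finitely cogenerated injective $M$ is a direct summand of some $G^{n}$. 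Applying Corollary \ref{c:strel}(1) with $M$ regarded as a direct summand of $G^{n}$ on both sides shows that $M$ is (strongly) $M$-$(r(G^{n})\cap M)$-split, and $r(G^{n})\cap M=r(M)$ because $r(G^{n})=r(M)\oplus r(M')$ for the complementary summand $M'$; hence $M$ is (strongly) self-$r(M)$-split, which is (iv).

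\emph{Closing the cycle.} The implication (iii)$\Rightarrow$(i) is immediate (take $n=1$), and the step I expect to be the main obstacle is (iv)$\Rightarrow$(i): one must recover the decomposition $G\cong r(G)\oplus D$ and the von Neumann regularity of $\End_{\A}(D)$ from the splitness of the finitely cogenerated injective objects alone, even though $G$ itself need not be finitely cogenerated. Here I would exploit that every finitely cogenerated injective object is a direct summand of a finite power of $G$, so that this class already pins down $G$ up to the relevant direct-sum and endomorphism-ring data, together with the rigid behaviour of preradicals on injective objects, to arrive at (ii) and hence at (i). Throughout, the ``strong'' variants are carried along by tracking at each stage the stronger endomorphism-ring condition (abelian, equivalently strongly regular in the presence of von Neumann regularity), via Theorem \ref{t:endab}(1) and the results of \cite{CO1}.
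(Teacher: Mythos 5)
Your route coincides with the paper's for every implication you actually prove. Your (i)$\Rightarrow$(ii) is exactly the paper's argument: $D$ is a direct summand of the injective $G$, hence injective, hence direct injective, so a self-Rickart $D$ is also dual self-Rickart and $\End_{\A}(D)$ is von Neumann regular by \cite[Corollary~2.3]{C-K}; your direct (ii)$\Rightarrow$(i) just replaces the paper's trip around the cycle. Your (ii)$\Rightarrow$(iii) (via $r(G^{n})=r(G)^{n}$ and regularity of $\End_{\A}(D^{n})$, where you correctly identify the matrix ring the paper miswrites as $\End_{\A}(D)^{n}$), your (iii)$\Rightarrow$(iv) (finitely cogenerated injectives are direct summands of finite powers $G^{n}$, then Corollary \ref{c:strel}), and your treatment of part (2) by duality all agree with the paper.

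The genuine gap is the one you flag yourself: nothing in your text proves (iv)$\Rightarrow$(i) (or any implication out of (iv)), so you have only established (i)$\Leftrightarrow$(ii)$\Leftrightarrow$(iii)$\Rightarrow$(iv); your closing paragraph (``this class already pins down $G$\dots'') is a plan, not an argument. The paper closes the cycle by simply declaring (iv)$\Rightarrow$(i) clear, i.e.\ it treats $G$ itself as falling under (iv) --- it is injective and is trivially cogenerated by a single copy of itself, just as in part (2) the projective generator is regarded as covered by ``finitely generated projective.'' You are right that under a strict reading of ``finitely cogenerated'' this step needs justification, but then you must either adopt the paper's reading or supply the missing argument; as written, the equivalence of (iv) with the other three conditions is not established. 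A secondary caution: your last sentence, like the paper's one-line treatment of the strong case, asserts that the strong variants ``carry along'' by tracking the abelian endomorphism-ring condition; but at the step (ii)$\Rightarrow$(iii) the relevant ring is $\End_{\A}(D^{n})\cong M_{n}(\End_{\A}(D))$, which is not abelian for $n\ge 2$ unless $D=0$, so the strongly regular/abelian condition does not simply pass to powers of $G$ --- this needs an explicit discussion rather than a wave-through (the paper is equally terse here, so this is a shared weakness rather than a divergence from its proof).
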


\begin{proof} (1) We first prove the theorem without strongness conditions. 

(i)$\Rightarrow$(ii) Assume that (i) holds. Then $G=r(G)\oplus D$ for some self-Rickart object $D$ of $\A$ 
by Theorem \ref{t:key}. We claim that $D$ is also dual self-Rickart.
To this end, let $f:D\to D$ be a morphism in $\A$. Then ${\rm Im}(f)\cong D/{\rm Ker}(f)$ is isomorphic to 
a direct summand of $D$. As $G$ is injective, so is $D$. Hence $D$ is direct injective, 
in the sense that every subobject of $D$ isomorphic to a direct summand of $D$ is a direct summand of $D$.
It follows that ${\rm Im}(f)$ is a direct summand of $D$. Hence $D$ is self-Rickart and dual self-Rickart. 
Then ${\rm End}_{\A}(D)$ is von Neumann regular by \cite[Corollary~2.3]{C-K}.

(ii)$\Rightarrow$(iii) Assume that (ii) holds. Let $M=G^n$ for some $n\in \mathbb{N}$. 
Then $M=r(G)^n\oplus D^n=r(M)\oplus D^n$. Note that ${\rm End}_{\A}(D^n)\cong {\rm End}_{\A}(D)^n$ is von Neumann regular, 
hence $D^n$ is self-regular, and so $D^n$ is self-Rickart by \cite[Corollary~2.3]{C-K}. 
Finally, $M$ is self-$r(M)$-split by Theorem \ref{t:key}.

(iii)$\Rightarrow$(iv) Assume that (iii) holds. Note that every finitely cogenerated injective object of $\A$ 
is isomorphic to a direct summand of a direct product $G^n$. Then use Corollary \ref{c:strel}.

(iv)$\Rightarrow$(i) This is clear. 

Finally, the equivalence of the strong versions of the above conditions is immediately deduced by the above proof 
and the following properties: an object $M$ of $\A$ is strongly self-$r(M)$-split if and only if 
$M$ is self-$r(M)$-split and ${\rm End}_{\A}(r(M))$ is abelian (Theorem \ref{t:endab}), while 
a ring is strongly regular if and only if it is von Neumann regular and abelian. 
\end{proof}

\begin{cor} Let $r$ be a preradical of ${}^C\mathcal{M}$. Then the following are equivalent:
\begin{enumerate}[(i)]
\item $C$ is a (strongly) self-$r(C)$-split left $C$-comodule.
\item $C=r(C)\oplus D$ for some subcomodule $D$ of $C$ and ${\rm End}_C(D)$ is a von Neumann regular (strongly regular) ring.
\item Every finitely cogenerated free left $C$-comodule $M$ is (strongly) self-$r(M)$-split.
\item Every finitely cogenerated injective left $C$-comodule $M$ is (strongly) self-$r(M)$-split.
\end{enumerate}
\end{cor}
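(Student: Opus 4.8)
The plan is to obtain this as an immediate specialisation of Theorem \ref{t:r}(1) to the comodule category $\A={}^C\mathcal{M}$. The key structural fact to invoke is that ${}^C\mathcal{M}$ is a Grothendieck category possessing $C$ itself as an injective cogenerator (see \cite{DNR}); hence Theorem \ref{t:r}(1) applies with $G=C$, and its four equivalent conditions (i)--(iv) translate verbatim into conditions (i), (ii), (iv) of the present corollary together with the statement ``every finite direct product $M=C^n$ is (strongly) self-$r(M)$-split''.

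The only point that requires an argument is the identification of condition (iii) of Theorem \ref{t:r}(1), phrased in terms of the finite powers $C^n$, with condition (iii) of the corollary, phrased in terms of finitely cogenerated free left $C$-comodules. First I would recall that a free left $C$-comodule is by definition a coproduct $C^{(I)}$ of copies of $C$. Since $C$ is a cogenerator of ${}^C\mathcal{M}$, a coproduct of nonzero objects indexed by an infinite set fails to be finitely cogenerated, so $C^{(I)}$ is finitely cogenerated precisely when $I$ is finite; and for finite $I$ the coproduct $C^{(I)}$ coincides with the finite product $C^{|I|}$. Thus the finitely cogenerated free left $C$-comodules are exactly the objects $C^n$, $n\in\mathbb{N}$, and the two formulations of (iii) agree. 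Putting these together, the equivalence (i)$\Leftrightarrow$(ii)$\Leftrightarrow$(iii)$\Leftrightarrow$(iv) is exactly Theorem \ref{t:r}(1) read inside ${}^C\mathcal{M}$, with $r$ the given preradical of ${}^C\mathcal{M}$.

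I do not anticipate a genuine obstacle here; the one place to be slightly careful is the claim that an infinite coproduct of copies of $C$ is not finitely cogenerated, which rests on $C$ being a cogenerator (equivalently, on the socle/essential-extension characterisation of finitely cogenerated objects in a Grothendieck category). If one wished to avoid even that, one could instead invoke the observation already used in the proof of Theorem \ref{t:r} that every finitely cogenerated injective object of $\A$ is isomorphic to a direct summand of some $G^n$, together with Corollary \ref{c:strel}, to pass directly between (iii) and (iv) and bypass the ``free'' reformulation altogether.
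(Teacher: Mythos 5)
Your proposal is correct and matches the paper's proof, which is exactly the one-line specialisation of Theorem \ref{t:r}(1) to $\A={}^C\mathcal{M}$ with $G=C$, using that $C$ is an injective cogenerator of ${}^C\mathcal{M}$. The extra care you take in identifying the finitely cogenerated free comodules with the finite powers $C^n$ (or the fallback via direct summands of $C^n$ and Corollary \ref{c:strel}) is just a spelled-out version of the same reduction.
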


\begin{proof} This follows by Theorem \ref{t:r}, noting that $C$ is an injective cogenerator of ${}^C\mathcal{M}$. 
\end{proof}

\begin{cor} Let $r$ be a preradical of ${\rm Mod}(R)$. Then the following are equivalent:
\begin{enumerate}[(i)]
\item $R$ is a dual (strongly) self-$r(R)$-split right $R$-module.
\item $r(R)$ is a direct summand of $R$ and ${\rm End}_R(r(R))$ is a von Neumann regular (strongly regular) ring.
\item Every finitely generated free right $R$-module $M$ is dual (strongly) self-$r(M)$-split.
\item Every finitely generated projective right $R$-module $M$ is dual (strongly) self-$r(M)$-split.
\end{enumerate}
\end{cor}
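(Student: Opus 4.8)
The plan is to deduce this corollary directly from Theorem \ref{t:r}(2), applied to the abelian category $\A = {\rm Mod}(R)$ with $G = R_R$. First I would check that the hypothesis of Theorem \ref{t:r}(2) is met, namely that ${\rm Mod}(R)$ has a projective generator. This is standard: the module $R_R$ is projective (it is free of rank one) and it is a generator, since every right $R$-module is an epimorphic image of a free module $R^{(I)}$. So $G = R_R$ plays the role of the projective generator in Theorem \ref{t:r}(2).

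Next I would match the four conditions of the corollary with conditions (i)--(iv) of Theorem \ref{t:r}(2). Condition (i) is literally condition (i) of the theorem, since $G = R_R$ and $r(G) = r(R)$. Condition (ii) coincides with condition (ii) of the theorem once we note that ${\rm End}_{\A}(r(G)) = {\rm End}_R(r(R))$. For conditions (iii) and (iv), I would observe that in ${\rm Mod}(R)$ a finite direct sum $M = G^n$ is exactly a finitely generated free right $R$-module, and that the finitely generated projective objects of ${\rm Mod}(R)$ are precisely the finitely generated projective right $R$-modules; hence these are conditions (iii) and (iv) of the theorem verbatim. The same identifications work simultaneously for the plain and the strong versions, because Theorem \ref{t:r}(2) already treats both.

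The conclusion then follows at once from the chain of equivalences in Theorem \ref{t:r}(2). There is no real obstacle here: the only (routine) points to keep in mind are the identification of $G^n$ with finitely generated free modules, the identification of the finitely generated projective objects, and the equality of the endomorphism rings ${\rm End}_{\A}(r(G)) = {\rm End}_R(r(R))$ --- all of which are immediate in the module category.
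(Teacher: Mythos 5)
Your proposal is correct and is exactly the paper's argument: the paper also deduces the corollary from Theorem \ref{t:r}(2) by noting that $R$ is a projective generator of ${\rm Mod}(R)$. The routine identifications you spell out (finitely generated free modules as $G^n$, finitely generated projectives, and the endomorphism ring of $r(R)$) are indeed all that is needed.
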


\begin{proof} This follows by Theorem \ref{t:r}, noting that $R$ is a projective generator of ${\rm Mod}(R)$. 
\end{proof}

\section{Further applications}

Throughout the paper we have given several examples and consequences of our results. 
In this section we present some further applications to module and comodule categories.

\subsection{Module categories} Throughout $R$ is a unitary ring. 

\begin{prop} 
\begin{enumerate} 
\item  Let $I$ be a right $T$-nilpotent ideal of $R$ and let $r$ be the cohereditary radical associated to $I$, given by $r(M)=MI$. 
Then a right $R$-module $M$ is (strongly) self-$r(M)$-split 
if and only if $M$ is $r$-torsionfree (strongly) self-Rickart. 
\item Let $I$ be a left $T$-nilpotent ideal of $R$ and let $r$ be the superhereditary preradical associated to $I$, given by $r(M)=\{x\in M\mid xI=0\}$. 
Then a right $R$-module $M$ is dual (strongly) self-$r(M)$-split if and only if $M$ is $r$-torsion (strongly) dual self-Rickart. 
\end{enumerate}
\end{prop}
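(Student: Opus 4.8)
The plan is to deduce both statements from Theorem \ref{t:key} combined with the classical (and its dual) Bass characterisation of $T$-nilpotent ideals. In both parts, $r$ is a preradical of ${\rm Mod}(R)$, so $0\to r(M)\stackrel{i}\to M\stackrel{d}\to M/r(M)\to 0$ is a fully invariant short exact sequence by Proposition \ref{p:fiprerad}, and Theorem \ref{t:key} applies with $F=r(M)$ and $C=M/r(M)$.

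For (1), I would first recall Bass's theorem: $I$ is right $T$-nilpotent if and only if $NI$ is a superfluous submodule of $N$ for every right $R$-module $N$. Assume $M$ is (strongly) self-$r(M)$-split. By Theorem \ref{t:key}(1), $M\cong r(M)\oplus C$ with $C\cong M/r(M)$ (strongly) self-Rickart; in particular $r(M)=MI$ is a direct summand of $M$. Being also superfluous in $M$, and since a superfluous direct summand is zero, we get $MI=r(M)=0$, i.e. $M$ is $r$-torsionfree, and then $C\cong M$ is (strongly) self-Rickart. Conversely, if $M$ is $r$-torsionfree then $r(M)=0$, so $M\cong 0\oplus M$ with $M/r(M)=M$ (strongly) self-Rickart, whence $M$ is (strongly) self-$r(M)$-split by Theorem \ref{t:key}(1).

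For (2), I would use the dual form of Bass's theorem: since $I$ is left $T$-nilpotent, $r(N)=\{x\in N\mid xI=0\}$ is an essential submodule of $N$ for every right $R$-module $N$ (equivalently, there is no nonzero $r$-torsionfree right $R$-module). Assume $M$ is dual (strongly) self-$r(M)$-split. By Theorem \ref{t:key}(2), $M\cong r(M)\oplus C$ with $C\cong M/r(M)$ and $r(M)$ dual (strongly) self-Rickart; in particular $r(M)$ is a direct summand of $M$. Being also essential in $M$, and since an essential direct summand is the whole module, $r(M)=M$, so $M$ is $r$-torsion (hence $C=0$) and $r(M)=M$ is dual (strongly) self-Rickart. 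Conversely, if $M$ is $r$-torsion then $r(M)=M$, so $M\cong M\oplus 0$ with $r(M)=M$ dual (strongly) self-Rickart, and Theorem \ref{t:key}(2) gives that $M$ is dual (strongly) self-$r(M)$-split. (Instead of essentiality one may argue: since $r$ is hereditary, hence idempotent, and preradicals split over finite direct sums, $M\cong r(M)\oplus C$ forces $r(C)=0$, and left $T$-nilpotence of $I$ then forces $C=0$.)

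Everything here is routine except the one genuine input, which is also the point I would be most careful about: quoting the correct form of Bass's theorem, namely "$I$ right $T$-nilpotent $\Leftrightarrow$ $NI$ superfluous in $N$ for all right $R$-modules $N$" for (1) and its dual "$I$ left $T$-nilpotent $\Leftrightarrow$ $\{x\in N\mid xI=0\}$ essential in $N$ for all right $R$-modules $N$" for (2) — in particular, keeping track of the side and of the product-order convention in the definition of $T$-nilpotence so that the two halves of the Proposition come out with the stated sidedness.
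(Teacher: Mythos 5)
Your proof is correct and follows essentially the same route as the paper: apply Theorem \ref{t:key} to see that $r(M)$ is a direct summand, then use the fact that right (resp.\ left) $T$-nilpotence of $I$ makes $r(M)=MI$ superfluous (resp.\ $r(M)=\{x\in M\mid xI=0\}$ essential) in $M$, forcing $r(M)=0$ (resp.\ $r(M)=M$), with the converse immediate. The paper cites exactly these Bass-type facts from Bican--Kepka--Nemec (Propositions I.4.6 and I.4.12), so your care about the correct sidedness of $T$-nilpotence is precisely the point the reference covers.
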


\begin{proof} (1) There is a bijective correspondence between the ideals $I$ of a ring $R$ 
and the cohereditary radicals $r$ of ${\rm Mod}(R)$, where $r$ is given by $r(M)=MI$ \cite[Corollary~I.2.11]{BKN}.
If $M$ is (strongly) self-$r(M)$-split, then $r(M)$ is a superfluous direct summand of $M$ 
by \cite[Proposition~I.4.6]{BKN} and Theorem \ref{t:key}, hence $r(M)=0$, 
and so $M$ is $r$-torsionfree (strongly) self-Rickart. The converse is clear.  
 
(2) There is a bijective correspondence between the ideals $I$ of a ring $R$ 
and the superhereditary preradicals $r$ of ${\rm Mod}(R)$, where $r$ is given by 
$r(M)=\{x\in M\mid xI=0\}$ \cite[I.2.E6]{BKN}. 
If $M$ is dual (strongly) self-$r(M)$-split, then $r(M)$ is an essential direct summand of $M$ 
by \cite[Proposition~I.4.12]{BKN} and Theorem \ref{t:key}, hence $r(M)=M$, 
and so $M$ is $r$-torsion (strongly) dual self-Rickart. The converse is clear.   
\end{proof}

For a right $R$-module $M$, denote by ${\rm Soc}(M)$ the socle of $M$ (i.e., the sum of its simple submodules),
and by ${\rm Rad}(M)$ the Jacobson radical of $M$ (i.e., the intersection of its maximal submodules).
Then Soc is a hereditary preradical and Rad is a cohereditary preradical of ${\rm Mod}(R)$.

\begin{prop} 
\begin{enumerate} 
\item A finitely generated right $R$-module $M$ is (strongly) self-${\rm Rad}(M)$-split if and only if 
$M$ is semiprimitive (strongly) self-Rickart.
\item A finitely cogenerated right $R$-module $M$ is dual (strongly) self-${\rm Soc}(M)$-split if and only if 
$M$ is semisimple.
\end{enumerate}
\end{prop}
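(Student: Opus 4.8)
The plan is to derive both equivalences from Theorem~\ref{t:key}, combined with two classical facts: that ${\rm Rad}(M)$ is a superfluous submodule of a finitely generated module $M$, and that ${\rm Soc}(M)$ is an essential submodule of a finitely cogenerated module $M$. Since ${\rm Rad}$ and ${\rm Soc}$ are preradicals of ${\rm Mod}(R)$, the short exact sequences $0\to {\rm Rad}(M)\to M\to M/{\rm Rad}(M)\to 0$ and $0\to {\rm Soc}(M)\to M\to M/{\rm Soc}(M)\to 0$ are fully invariant, so Theorem~\ref{t:key} is available in both settings.

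\emph{Part (1).} By Theorem~\ref{t:key}(1), $M$ is (strongly) self-${\rm Rad}(M)$-split if and only if $M\cong {\rm Rad}(M)\oplus C$ and $C$ is (strongly) self-Rickart. First I would assume this decomposition holds: since $M$ is finitely generated, ${\rm Rad}(M)$ is superfluous in $M$, but it is also a direct summand, and a superfluous direct summand must be $0$; hence ${\rm Rad}(M)=0$, that is, $M$ is semiprimitive, and then $M\cong C$ is (strongly) self-Rickart. Conversely, if $M$ is semiprimitive then ${\rm Rad}(M)=0$, so $M\cong {\rm Rad}(M)\oplus M$ trivially; if in addition $M$ is (strongly) self-Rickart, Theorem~\ref{t:key}(1) yields that $M$ is (strongly) self-${\rm Rad}(M)$-split.

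\emph{Part (2).} By Theorem~\ref{t:key}(2), $M$ is dual (strongly) self-${\rm Soc}(M)$-split if and only if $M\cong {\rm Soc}(M)\oplus C$ and ${\rm Soc}(M)$ is dual (strongly) self-Rickart. Assuming this, since $M$ is finitely cogenerated, ${\rm Soc}(M)$ is essential in $M$, and an essential direct summand equals the whole module, so ${\rm Soc}(M)=M$ and $M$ is semisimple. Conversely, if $M$ is semisimple then ${\rm Soc}(M)=M$ and $M\cong {\rm Soc}(M)\oplus 0$; every submodule of $M$ is a direct summand, hence the cokernel of every endomorphism of $M$ is a retraction, so ${\rm Soc}(M)=M$ is dual self-Rickart, and Theorem~\ref{t:key}(2) gives the conclusion in the non-strong case.

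The step I expect to require the most care is the strong version of part~(2): there one must additionally check that the cokernels above are \emph{fully coinvariant} retractions, equivalently that the image of every endomorphism of the semisimple module ${\rm Soc}(M)=M$ is fully invariant. I would handle this by invoking the characterization of dual strongly self-Rickart objects in terms of the endomorphism ring (the same ingredient used for Theorem~\ref{t:endab}), keeping track of the abelianness condition on ${\rm End}_R(M)$ where it is needed. By contrast, in part~(1) the finitely generated semiprimitive case, including its strong variant, is routine once the superfluity of ${\rm Rad}(M)$ and the triviality of superfluous direct summands are in place; it would also be worth remarking that the finiteness hypotheses (finitely generated in (1), finitely cogenerated in (2)) are exactly what make the superfluous/essential arguments go through.
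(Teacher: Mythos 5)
Your treatment of part (1) and of the non-strong half of part (2) is exactly the paper's argument: apply Theorem \ref{t:key}, then use that ${\rm Rad}(M)$ is superfluous in a finitely generated module (so a superfluous direct summand must vanish) and that ${\rm Soc}(M)$ is essential in a finitely cogenerated module (so an essential direct summand is the whole module); the converses are immediate because ${\rm Rad}(M)=0$ reduces (strong) self-${\rm Rad}(M)$-splitness to (strong) self-Rickartness via Remark \ref{r:Rickart}, while a semisimple $M$ is dual self-Rickart and ${\rm Soc}(M)=M$. Up to that point the proposal is correct and follows the same route as the paper, which likewise dismisses the converses as clear.

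The genuine gap is precisely the step you defer: the converse of the strong version of (2). Your plan --- invoke the endomorphism-ring characterization behind Theorem \ref{t:endab} and ``keep track of the abelianness condition where it is needed'' --- cannot be completed, because abelianness of ${\rm End}_R(M)$ is an additional hypothesis that semisimplicity does not supply. Concretely, take $M=S\oplus S$ for a simple module $S$: it is finitely cogenerated and semisimple with ${\rm Soc}(M)=M$, so dual strong self-${\rm Soc}(M)$-splitness amounts to $M$ being dual strongly self-Rickart (Remark \ref{r:Rickart}); but ${\rm End}_R(M)\cong M_2({\rm End}_R(S))$ is not abelian (equivalently, the image $S\oplus 0$ of the evident idempotent endomorphism is a direct summand that is not fully invariant, being moved by the swap map), so by Theorem \ref{t:endab}(2) $M$ is \emph{not} dual strongly self-${\rm Soc}(M)$-split. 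Hence in the strong case only the implication ``dual strongly self-${\rm Soc}(M)$-split $\Rightarrow$ semisimple'' can be proved; an equivalence requires strengthening the right-hand side to ``$M$ is semisimple and ${\rm End}_R(M)$ is abelian'' (equivalently, every simple occurs in $M$ with multiplicity one). Your instinct that this case needs care is well founded --- the paper's own proof says no more than ``the converse is clear,'' which really only covers the non-strong reading --- but the fix you sketch does not yield a proof of the statement as written.
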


\begin{proof} (1) If $M$ is finitely generated (strongly) self-${\rm Rad}(M)$-split, 
then ${\rm Rad}(M)$ is a superfluous direct summand of $M$ by Theorem \ref{t:key}, hence ${\rm Rad}(M)=0$, 
and so $M$ is semiprimitive (strongly) self-Rickart. The converse is clear.  

(2) If $M$ is finitely cogenerated dual (strongly) self-${\rm Soc}(M)$-split, 
then ${\rm Soc}(M)$ is an essential direct summand of $M$ by Theorem \ref{t:key},  
hence $M={\rm Soc}(M)$ is semisimple. The converse is clear.  
\end{proof}

\begin{cor} 
\begin{enumerate} 
\item Every finitely generated right $R$-module $M$ is (strongly) self-${\rm Rad}(M)$-split if and only if 
every finitely generated right $R$-module $M$ is semiprimitive (strongly) self-Rickart.
\item Every finitely cogenerated right $R$-module $M$ is dual (strongly) self-${\rm Soc}(M)$-split if and only if 
$R$ is a right $V$-ring.
\end{enumerate}
\end{cor}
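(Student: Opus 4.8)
The plan is to obtain both parts as immediate consequences of the preceding proposition, with part (2) also invoking a standard description of right $V$-rings.

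For (1), I would apply part (1) of the preceding proposition --- which asserts that a finitely generated right $R$-module $M$ is (strongly) self-${\rm Rad}(M)$-split if and only if $M$ is semiprimitive (strongly) self-Rickart --- to an arbitrary finitely generated right $R$-module $M$, and then quantify universally over all such $M$. No further work is required; the equivalence is a pure re-quantification, and it simultaneously covers the plain and the strong versions.

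For (2), part (2) of the preceding proposition says that a finitely cogenerated right $R$-module $M$ is dual (strongly) self-${\rm Soc}(M)$-split if and only if $M$ is semisimple (the strongness plays no role, since a semisimple module is automatically dual strongly self-${\rm Soc}(M)$-split). Hence the left-hand condition of (2) is equivalent to the assertion that every finitely cogenerated right $R$-module is semisimple, and it remains only to identify this with $R$ being a right $V$-ring. If $R$ is a right $V$-ring and $M$ is finitely cogenerated, then $M$ embeds in $E(M)\cong\bigoplus_{k=1}^n E(S_k)$ for simple right $R$-modules $S_k$; each $S_k$ is injective, so $E(S_k)=S_k$, whence $M$ embeds in the semisimple module $\bigoplus_{k=1}^n S_k$ and is itself semisimple. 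Conversely, assume every finitely cogenerated right $R$-module is semisimple, and let $S$ be a simple right $R$-module. Its injective hull $E(S)$ has $S$ as an essential submodule, so every simple submodule of $E(S)$ lies in $S$ and hence equals $S$; thus ${\rm Soc}(E(S))=S$ is simple and essential, so $E(S)$ is finitely cogenerated. By hypothesis $E(S)$ is semisimple, which forces $E(S)={\rm Soc}(E(S))=S$, so $S$ is injective. Therefore $R$ is a right $V$-ring.

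I expect no genuine obstacle: the content is entirely carried by the preceding proposition, and the only mild point to keep in mind is that its statements already collapse the (strongly) variant, so that (1) is a trivial re-quantification and (2) reduces cleanly to the semisimplicity of all finitely cogenerated modules, a well-known reformulation of the right $V$-ring condition.
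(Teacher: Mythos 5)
Your proposal is correct and follows essentially the same route as the paper: both parts are read off from the preceding proposition, and for (2) the paper simply cites the standard fact that every finitely cogenerated right $R$-module is semisimple if and only if $R$ is a right $V$-ring (Wisbauer, 23.1), which you instead prove directly via injective hulls of simples --- a harmless expansion of the citation rather than a different argument.
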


\begin{proof} (2) Note that every finitely cogenerated right $R$-module is semisimple if and only if 
$R$ is a right $V$-ring \cite[23.1]{Wis}. 
\end{proof}

Recall that an object $M$ of an abelian category $\A$ is called \emph{(strongly) extending} 
if every subobject of $M$ is essential in a (fully invariant) direct summand of $M$, 
and \emph{(strongly) lifting} if every subobject $L$ of $M$ contains a (fully invariant) direct summand $K$ of $M$ 
such that $L/K$ is superfluous in $M/K$ \cite{C-K,Atani14,WangY}. 

For a right $R$-module $M$, consider the full subcategory $\sigma[M]$ of ${\rm Mod}(R)$ consisting of 
$M$-subgenerated right $R$-modules. Note that if $M=R_R$, then $\sigma[M]={\rm Mod}(R)$. 
For every module $N\in \sigma[M]$, denote by:
\begin{itemize}
\item $Z_M(N)={\rm Tr}(\mathcal{U},N)=\sum \{{\rm Im}(f) \mid f\in \Hom(\mathcal{U},N)\}$, 
where $\mathcal{U}$ is the class of $M$-singular modules. 
Recall that a module $A$ is called \emph{$M$-singular} (or \emph{singular} in $\sigma[M]$) 
if $A\cong L/K$ for some $L\in \sigma[M]$ and essential submodule $K$ of $L$. 
Also, $A$ is called \emph{non-$M$-singular} (or \emph{non-singular} in $\sigma[M]$) if $Z_M(N)=0$.
\item $Z^2_M(N)$ the second $M$-singular submodule of $N$, which is determined by the equality
$Z^2_M(N)/Z_M(N)=Z_M(N/Z_M(N))$. 
\item ${\overline Z}_M(N)={\rm Re}(N,\mathcal{U})=\bigcap \{{\rm Ker}(f) \mid f\in \Hom(N,\mathcal{U})\}$, 
where $\mathcal{U}$ is the class of $M$-small modules. Recall that a module $A$ is called \emph{$M$-small} 
if $A$ is superfluous in some $L\in \sigma[M]$.
Also, $N$ is called \emph{$M$-cosingular} (or \emph{cosingular} in $\sigma[M]$) 
if ${\overline Z}_M(N)=0$, and \emph{non-$M$-cosingular} (or \emph{non-cosingular} in $\sigma[M]$) if ${\overline Z}_M(N)=N$.
\item ${\overline Z}^2_M(N)={\overline Z}_M({\overline Z}_M(N))$. 
\end{itemize}
Then $Z_M$ is a hereditary preradical, $Z^2_M$ is a hereditary radical, while
${\overline Z}_M$ and ${\overline Z}^2_M$ are radicals of $\sigma[M]$.

\begin{prop} 
\begin{enumerate}[(i)]
\item Assume that $R$ is non-singular (i.e., $Z(R_R)=0$) and every finitely generated non-singular right
$R$-module is projective. Then every finitely generated right $R$-module $M$ is self-$Z^2(M)$-split.
\item Assume that every finitely cogenerated singular right $R$-module
is injective. Then every finitely cogenerated right $R$-module $M$ is dual self-$Z(M)$-split.
\end{enumerate}
\end{prop}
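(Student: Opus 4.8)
The plan is to deduce both parts from Lemma \ref{l:kc} by reducing to a single endomorphism and then feeding the hypothesis a suitable image. Recall that, working in $\mathrm{Mod}(R)=\sigma[R_R]$, the functor $Z=Z_{R_R}$ is a hereditary preradical and $Z^2=Z^2_{R_R}$ is a hereditary radical of $\mathrm{Mod}(R)$; hence $0\to Z^2(M)\to M\to M/Z^2(M)\to 0$ and $0\to Z(M)\to M\to M/Z(M)\to 0$ are fully invariant short exact sequences for every right $R$-module $M$.

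For (i), let $M$ be finitely generated and let $d:M\to M/Z^2(M)$ be the canonical epimorphism. By Lemma \ref{l:kc}~(1) it suffices to check that for every $g\in\mathrm{End}_R(M)$ the morphism $\mathrm{ker}(dg):\mathrm{Ker}(dg)\to M$ is a section. I would observe that $\mathrm{Im}(dg)\cong M/\mathrm{Ker}(dg)$ is, on the one hand, a homomorphic image of the finitely generated module $M$, hence finitely generated, and, on the other hand, a subobject of $M/Z^2(M)$, which is non-singular since $Z^2$ is a radical; as the class of non-singular modules is closed under subobjects, $\mathrm{Im}(dg)$ is non-singular. By hypothesis $\mathrm{Im}(dg)$ is then projective, so the exact sequence $0\to\mathrm{Ker}(dg)\to M\to\mathrm{Im}(dg)\to 0$ splits; thus $\mathrm{ker}(dg)$ is a section, and $M$ is self-$Z^2(M)$-split.

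For (ii), let $M$ be finitely cogenerated and let $i:Z(M)\to M$ be the inclusion. By Lemma \ref{l:kc}~(2) it suffices to check that for every $g\in\mathrm{End}_R(M)$ the morphism $\mathrm{coker}(gi)$ is a retraction, that is, that $\mathrm{Im}(gi)=g(Z(M))$ is a direct summand of $M$. Here I would use that $g(Z(M))$ is a subobject of the finitely cogenerated module $M$, hence finitely cogenerated, and simultaneously a homomorphic image of the singular module $Z(M)$, hence singular (the class of $r$-torsion objects being closed under factor objects). By hypothesis $g(Z(M))$ is therefore injective, and an injective subobject of $M$ is a direct summand of $M$; so $\mathrm{coker}(gi)$ is a retraction and $M$ is dual self-$Z(M)$-split.

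Both arguments are short, and I do not expect a genuine obstacle: the only delicate point is the bookkeeping of which of the relevant classes each image belongs to — in (i) that $\mathrm{Im}(dg)$ is at once finitely generated and non-singular, and in (ii) that $g(Z(M))$ is at once finitely cogenerated and singular — after which the hypotheses apply verbatim and Lemma \ref{l:kc} closes the argument. One could equally route the proofs through Theorem \ref{t:key}: in (i), $M/Z^2(M)$ is finitely generated non-singular, hence projective, hence self-Rickart and the sequence $0\to Z^2(M)\to M\to M/Z^2(M)\to 0$ splits; in (ii), $Z(M)$ is injective, being finitely cogenerated and singular, so $M\cong Z(M)\oplus M/Z(M)$, and $Z(M)$ is dual self-Rickart by the same image argument.
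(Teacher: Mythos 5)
Your proof is correct, but it follows a genuinely more direct route than the paper's. The paper first splits the fully invariant sequence ($M/Z^2(M)$, resp.\ $Z(M)$, is projective, resp.\ injective, hence a direct summand), then invokes ring-theoretic facts to get Rickartness of the relevant summand --- in (i) that the hypothesis forces $R$ to be right semihereditary (Goodearl), so the finitely generated projective $M/Z^2(M)$ is self-Rickart, and in (ii) an explicit argument that $R$ is right cosemihereditary, so the finitely cogenerated injective $Z(M)$ is dual self-Rickart --- and finally concludes via Theorem \ref{t:key}. You instead verify the defining condition of Lemma \ref{l:kc} endomorphism by endomorphism: for $g\in\mathrm{End}_R(M)$ the image $\mathrm{Im}(dg)$ is finitely generated and non-singular (as a subobject of $M/Z^2(M)$, which is non-singular since $Z\leq Z^2$ and $Z^2$ is a radical), hence projective and the kernel splits off; dually $g(Z(M))$ is finitely cogenerated and singular, hence injective and a summand. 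This bypasses the semihereditary/cosemihereditary detour and the citations to Goodearl and to \cite[Theorem~4.7]{C-K}, at the cost of not recording the (reusable) ring-theoretic byproducts the paper's proof establishes along the way; note also that in (i) you never use the assumption $Z(R_R)=0$, which your argument indeed does not need. One small caveat about your closing remark: in the alternative route through Theorem \ref{t:key}, ``projective, hence self-Rickart'' is not valid in general --- to see that $M/Z^2(M)$ is self-Rickart you must again apply the hypothesis to images of its endomorphisms (or use semihereditariness, as the paper does); your main argument does not rely on this shortcut, so the proof stands.
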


\begin{proof} (i) Let $M$ be a finitely generated right $R$-module.
Since the finitely generated non-singular (see \cite[Lemma 3.2]{V})
right $R$-module $M/Z^2(M)$ is projective, $M\cong Z^2(M)\oplus
M/Z^2(M)$. Since every ring with the property from hypothesis is
right semihereditary \cite[Theorem~5.18]{Goodearl}, $M/Z^2(M)$ is
self-Rickart \cite[Theorem~4.7]{C-K}. Hence $M$ is
self-$Z^2(M)$-split by Theorem \ref{t:key}. 

(ii) Let $M$ be a finitely cogenerated right $R$-module. Since the
finitely cogenerated singular right $R$-module $Z(M)$ is
injective, it is a direct summand of $M$. We claim that $R$ is
right cosemihereditary. To this end, let $E$ be an injective right
$R$-module and $K$ a submodule of $E$ such that $E/K$ is finitely
cogenerated. Consider the injective hull $E(K)$ of $K$ and the
induced epimorphism $\gamma:E/K\to E/E(K)$. Then ${\rm
Ker}(\gamma)\cong E(K)/K$ is finitely cogenerated singular, and so
it is injective. Also, we have $E\cong E(K)\oplus E/E(K)$, and so
$E/E(K)$ is injective. It follows that $E/K$ is injective, which
shows that $R$ is right cosemihereditary. Then $Z(M)$ is dual
self-Rickart \cite[Theorem~4.7]{C-K}. Hence $M$ is dual
self-$Z(M)$-split by Theorem~\ref{t:key}.
\end{proof}

\begin{prop} Let $M$ be a right $R$-module. Then:
\begin{enumerate}
\item Every (strongly) extending module $N\in \sigma[M]$ is (strongly) self-$Z^2_M(N)$-split.
\item Every (strongly) lifting module $N\in \sigma[M]$ is dual (strongly) self-${\overline Z}^2_M(N)$-split.
\end{enumerate}
\end{prop}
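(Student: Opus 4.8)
The plan is to reduce both parts to Theorem~\ref{t:key}. Since $Z^2_M$ is a (hereditary) radical of $\sigma[M]$ and ${\overline Z}^2_M$ is a radical of $\sigma[M]$, the short exact sequences $0\to Z^2_M(N)\to N\to N/Z^2_M(N)\to 0$ and $0\to{\overline Z}^2_M(N)\to N\to N/{\overline Z}^2_M(N)\to 0$ are fully invariant by Proposition~\ref{p:fiprerad}. So by Theorem~\ref{t:key} it is enough to show, for~(1), that $N\cong Z^2_M(N)\oplus N/Z^2_M(N)$ and that $N/Z^2_M(N)$ is (strongly) self-Rickart, and, for~(2), that $N\cong{\overline Z}^2_M(N)\oplus N/{\overline Z}^2_M(N)$ and that ${\overline Z}^2_M(N)$ is dual (strongly) self-Rickart.

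For the splitting in~(1) I would first observe that $Z^2_M(N)$ is a closed subobject of $N$: because $Z^2_M$ is a radical and $Z_M\le Z^2_M$ we get $Z_M(N/Z^2_M(N))\subseteq Z^2_M(N/Z^2_M(N))=0$, so $N/Z^2_M(N)$ is non-$M$-singular, and any subobject $L$ of $N$ in which $Z^2_M(N)$ is essential has $L/Z^2_M(N)$ both $M$-singular and, as a subobject of $N/Z^2_M(N)$, non-$M$-singular, hence zero. Since $N$ is extending, a closed subobject is a direct summand, so $N\cong Z^2_M(N)\oplus C$ with $C\cong N/Z^2_M(N)$ non-$M$-singular and, being a direct summand of an extending object, extending. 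Then for any endomorphism $f$ of $C$ the subobject ${\rm Ker}(f)$ is closed in $C$ by the same dichotomy (an essential extension $L$ of ${\rm Ker}(f)$ in $C$ would make $f(L)\cong L/{\rm Ker}(f)$ an $M$-singular subobject of the non-$M$-singular object $C$, forcing $f(L)=0$ and $L={\rm Ker}(f)$), hence a direct summand; so $C$ is self-Rickart and $N$ is self-$Z^2_M(N)$-split. (This self-Rickartness of non-$M$-singular extending objects can equally be invoked from \cite{C-K}.) Part~(2) is handled dually: ${\overline Z}^2_M(N)$ is a coclosed subobject of $N$ because it is non-$M$-cosingular while every $M$-small object is $M$-cosingular, so a proper coessential subobject would produce a nonzero $M$-small (hence $M$-cosingular) factor of a non-$M$-cosingular object; in a lifting object a coclosed subobject splits off, giving $N\cong{\overline Z}^2_M(N)\oplus C$ with ${\overline Z}^2_M(N)$ lifting and non-$M$-cosingular, hence dual self-Rickart by \cite{C-K}, and then Theorem~\ref{t:key} applies.

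For the strong versions I would upgrade the above. If $N$ is strongly extending, then $Z^2_M(N)$, being closed \emph{and} fully invariant, is a fully invariant direct summand; moreover any direct summand $D$ of $C$ is a direct summand of $N$, hence closed in $N$, hence a fully invariant direct summand of $N$, and restricting to $C$ the endomorphisms of $N$ obtained by extending endomorphisms of $C$ by zero on $Z^2_M(N)$ shows $D$ is fully invariant in $C$; thus $C$ is a duo object and, being self-Rickart, is strongly self-Rickart by Theorem~\ref{t:rel} and Remark~\ref{r:Rickart}, so $N$ is strongly self-$Z^2_M(N)$-split by Theorem~\ref{t:key}. The strong case of~(2) is the formal dual. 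The step I expect to be the real obstacle is exactly this bookkeeping of full invariance in the passage to the strong case, together with the genuinely new dual input needed for~(2), namely that ${\overline Z}^2_M(N)$ is coclosed in $N$ (equivalently, non-$M$-cosingular), which is the mirror of the closedness of $Z^2_M(N)$; as the paper does throughout, I would settle the strong half of~(2) by invoking the duality principle rather than rewriting the argument.
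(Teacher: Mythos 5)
Part (1) of your proposal is correct and, modulo bookkeeping, follows the paper's own route: you reprove by hand the decomposition $N=Z^2_M(N)\oplus C$ with $C$ non-$M$-singular (which the paper imports from \cite[Theorem~3.4]{V}, resp. \cite[Theorem~2.24]{Atani14}) and the (strong) self-Rickartness of $C$ (which the paper gets from $\mathcal{K}$-nonsingularity together with \cite[Theorem~9.5]{C-K}); your handling of the strong case, via the observation that every direct summand of a strongly extending module is fully invariant and then Theorem~\ref{t:rel} with $F=0$ and Remark~\ref{r:Rickart}, is a reasonable self-contained alternative (you prove only the weak duo property, all direct summands fully invariant, not ``duo'', but that is exactly what Theorem~\ref{t:rel} requires).

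The genuine gap is in part (2). Everything there rests on the assertion that ${\overline Z}^2_M(N)$ is non-$M$-cosingular, which you state without justification and then use to get coclosedness and hence, via liftingness, the splitting $N\cong{\overline Z}^2_M(N)\oplus C$. This is not the formal dual of the step you actually proved in (1): there, non-$M$-singularity of $N/Z^2_M(N)$ follows because $Z^2_M$ is a radical and $Z_M\le Z^2_M$; dually, non-$M$-cosingularity of ${\overline Z}^2_M(N)$ would need ${\overline Z}^2_M$ to be idempotent, i.e.\ ${\overline Z}_M({\overline Z}^2_M(N))={\overline Z}^2_M(N)$, and ${\overline Z}_M$, ${\overline Z}^2_M$ are only known to be radicals. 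That non-$M$-cosingularity (together with the splitting) is precisely the nontrivial input the paper imports, for lifting $N$, from \cite[Theorem~4.1]{TV}, and from \cite[Corollary~3.6]{WangY} in the strong case; it does not come for free. Relatedly, your plan to settle the strong half of (2) ``by the duality principle'' is not available: the statement lives in $\sigma[M]$ and involves non-self-dual module-theoretic notions (small, lifting, $M$-cosingular), so the categorical duality principle does not apply --- which is exactly why the paper proves (2) by a separate argument with its own module-theoretic citations instead of by duality. To repair your part (2) you must either cite \cite{TV} and \cite{WangY} as the paper does, or actually prove that ${\overline Z}^2_M(N)$ is non-$M$-cosingular for (strongly) lifting $N$ and redo the fully invariant bookkeeping by hand.
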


\begin{proof} (1)  If $N$ is (strongly) extending, then $N=Z^2_M(N)\oplus N'$ for some non-$M$-singular (strongly) extending 
submodule $N'$ of $N$ \cite[Theorem~3.4]{V} (\cite[Theorem~2.24]{Atani14}). 
Note that a module $X\in \sigma[M]$ is non-$M$-singular if and only if 
for every $K\in \sigma[M]$ and for every $0\neq f\in \Hom(K,N')$, ${\rm Ker}(f)$ is not essential in $K$.  
Clearly, every non-$M$-singular module $X\in \sigma[M]$ is $X$-$\mathcal{K}$-nonsingular, 
in the sense that for every $0\neq f\in {\rm End}(X)$, ${\rm Ker}(f)$ is not essential in $X$ \cite[Definition~9.4]{C-K}. 
It follows that $N'$ is (strongly) self-Baer by \cite[Theorem~9.5]{C-K}, and so $N'$ is (strongly) self-Rickart. 
Hence $N$ is (strongly) self-$Z^2_M(N)$-split by Theorem \ref{t:key}. 

(2) If $N$ is (strongly) lifting, then $N={\overline Z}^2_M(N)\oplus N'$ for some submodule $N'$ of $N$ such that 
${\overline Z}^2_M(N)$ is non-$M$-cosingular (strongly) lifting \cite[Theorem~4.1]{TV} (\cite[Corollary~3.6]{WangY}). 
Note that a module $Y\in \sigma[M]$ is non-$M$-cosingular if and only if 
every non-zero factor module of $Y$ is not $M$-small, that is, it does not exist any module $L\in \sigma[M]$ 
such that $Y$ is superfluous in $L$. Clearly, every non-$M$-cosingular module $Y\in \sigma[M]$ is 
$Y$-$\mathcal{T}$-nonsingular, in the sense that 
for every $0\neq f\in {\rm End}(Y)$, ${\rm Im}(f)$ is not superfluous in $Y$ \cite[Definition~9.4]{C-K}.
It follows that ${\overline Z}^2_M(N)$ is dual (strongly) self-Baer by \cite[Theorem~9.5]{C-K}, 
and so ${\overline Z}^2_M(N)$ is dual (strongly) self-Rickart. 
Hence $N$ is dual (strongly) self-${\overline Z}^2_M(N)$-split by Theorem \ref{t:key}. 
\end{proof}

\subsection{Comodule categories} Throughout $C$ is a coalgebra over a field $k$. 
Let $C^*=\Hom_k(C,k)$. Then $\sigma[C_{C^*}]={\rm Rat}({\rm Mod}(C^*))\cong {}^C\mathcal{M}$, where 
${\rm Rat}({\rm Mod}(C^*))$ is the full subcategory of ${\rm Mod}(C^*)$ consisting of rational right $C^*$-modules 
and ${}^C\mathcal{M}$ is the category of left $C$-comodules \cite[Chapter~2]{DNR}. 
Hence one has the (pre)radicals from categories of the form $\sigma[M]$ for some module $M$ available in comodule categories. 
Also, for every right $C^*$-module $M$, denote by ${\rm Rat}(M)$ the largest rational submodule of $M$.   
Then Rat is a hereditary preradical of ${\rm Mod}(C^*)$.

\begin{prop} Let $M$ be a left $C$-comodule.
Then the following are equivalent:
\begin{enumerate}[(i)] 
\item $M$ is semisimple. 
\item $M$ is (strongly) self-${\rm Soc}(M)$-split.
\item $M$ is dual (strongly) self-${\rm Soc}(M)$-split.
\end{enumerate}
\end{prop}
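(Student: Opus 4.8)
The plan is to read the statement off Theorem~\ref{t:key}, applied to the fully invariant short exact sequence $0\to {\rm Soc}(M)\to M\to M/{\rm Soc}(M)\to 0$ (it is fully invariant since ${\rm Soc}$ is a hereditary preradical of ${}^C\mathcal{M}$), the only ingredient beyond that theorem being that the socle of every comodule is essential. Because ${}^C\mathcal{M}$ is locally finite, every nonzero subcomodule of $M$ contains a nonzero finite-length, hence simple, subcomodule, so ${\rm Soc}(M)$ is an essential subobject of $M$. I would first record the elementary remark that an essential direct summand of an object of an abelian category equals that object; hence, as soon as the inclusion ${\rm Soc}(M)\hookrightarrow M$ is a section, one gets ${\rm Soc}(M)=M$, i.e.\ $M$ is semisimple.

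Then I would run the implications. For (ii)$\Rightarrow$(i): if $M$ is (strongly) self-${\rm Soc}(M)$-split, Theorem~\ref{t:key}(1) gives $M\cong {\rm Soc}(M)\oplus M/{\rm Soc}(M)$ with the inclusion ${\rm Soc}(M)\hookrightarrow M$ a section, so the remark above forces ${\rm Soc}(M)=M$. For (iii)$\Rightarrow$(i): if $M$ is dual (strongly) self-${\rm Soc}(M)$-split, Theorem~\ref{t:key}(2) makes the canonical epimorphism $M\to M/{\rm Soc}(M)$ a retraction, which again splits off ${\rm Soc}(M)$ in $M$, whence ${\rm Soc}(M)=M$. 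For (i)$\Rightarrow$(ii): if $M$ is semisimple then ${\rm Soc}(M)=M$, the inclusion ${\rm Soc}(M)\hookrightarrow M$ is the identity (a fully invariant section), and $M/{\rm Soc}(M)=0$ is trivially (strongly) self-Rickart, so Theorem~\ref{t:key}(1) yields (ii). For (i)$\Rightarrow$(iii): with ${\rm Soc}(M)=M$, every subobject of the semisimple comodule $M$ — in particular the image of any endomorphism of $M$ — is a direct summand, so the cokernel of every endomorphism of ${\rm Soc}(M)$ is a retraction, i.e.\ ${\rm Soc}(M)$ is dual self-Rickart, and Theorem~\ref{t:key}(2) yields (iii); the strong versions are obtained by the same reductions together with Theorem~\ref{t:endab}.

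I expect the delicate points to be, first, the passage from the splitting of ${\rm Soc}(M)$ to the semisimplicity of $M$ — which rests on the essentiality of the socle in the locally finite category ${}^C\mathcal{M}$, and is precisely what allows the finitely generated / finitely cogenerated hypotheses of the module analogues to be dropped here — and second, the strong version of the dual equivalence, where Theorem~\ref{t:endab}(2) has to be brought in through the endomorphism ring of ${\rm Soc}(M)=M$. Everything else is routine bookkeeping with Theorem~\ref{t:key}, plus the trivialities that the zero object is (strongly) self-Rickart and that a semisimple comodule is dual self-Rickart.
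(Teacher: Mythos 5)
Your main line is the paper's own: the paper proves (ii)$\Rightarrow$(i) and (iii)$\Rightarrow$(i) exactly as you do, by combining Theorem \ref{t:key} (which splits off ${\rm Soc}(M)$ as a direct summand) with the essentiality of the socle of a comodule (the paper cites \cite[Corollary~2.4.12]{DNR}, where you argue via local finiteness of ${}^C\mathcal{M}$ -- the same fact), and it dismisses the converse implications as clear. So for the unparenthesized statements, and for (i)$\Leftrightarrow$(ii) in the strong form as well (where $M/{\rm Soc}(M)=0$ makes everything trivial), your proposal is correct and coincides with the paper's argument.

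The step that does not go through is your last sentence, where you claim the strong form of (i)$\Rightarrow$(iii) follows via Theorem \ref{t:endab}. Theorem \ref{t:endab}(2) would require ${\rm End}_{\A}({\rm Soc}(M))={\rm End}_{\A}(M)$ to be abelian, and a semisimple comodule need not satisfy this: for $M=S\oplus S$ with $S$ simple, ${\rm End}_{\A}(M)$ is a $2\times 2$ matrix ring over a division ring, and the image of the projection onto the first copy is a direct summand of $M$ that is not fully invariant, so $M$ is dual self-Rickart but not dual \emph{strongly} self-Rickart, i.e.\ not dual strongly self-${\rm Soc}(M)$-split. Hence Theorem \ref{t:endab} cannot deliver that implication; in fact this example shows that the strong reading of (i)$\Rightarrow$(iii) is false as stated, so no argument can fill the gap -- the paper itself hides the same defect behind ``the rest of the proof is clear''. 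What is true, and what your essentiality argument does establish, is (iii)$\Rightarrow$(i) in both the plain and strong versions, together with (i)$\Leftrightarrow$(ii) in both versions and (i)$\Rightarrow$(iii) in the plain version only.
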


\begin{proof} If $M$ is non-zero (strongly) self-${\rm Soc}(M)$-split or dual (strongly) self-${\rm Soc}(M)$-split, then 
${\rm Soc}(M)$ is a direct summand of $M$ by Theorem \ref{t:key} and 
an essential subcomodule of $M$ by \cite[Corollary~2.4.12]{DNR}, 
which implies that $M={\rm Soc}(M)$ is semisimple by \cite[Proposition~2.4.11]{DNR}. 
The rest of the proof is clear. 
\end{proof}

\begin{cor} The following are equivalent:
\begin{enumerate}[(i)]
\item $C$ is cosemisimple.
\item Every left $C$-comodule $M$ is (strongly) self-${\rm Soc}(M)$-split.
\item Every left $C$-comodule $M$ is dual (strongly) self-${\rm Soc}(M)$-split.
\end{enumerate}
\end{cor}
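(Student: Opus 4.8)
The plan is to reduce the statement immediately to the Proposition that precedes it, which already characterizes, for a single left $C$-comodule $M$, the semisimplicity of $M$ by each of the two splitting conditions relative to ${\rm Soc}(M)$. So the whole corollary will follow once we invoke the standard fact that $C$ is cosemisimple if and only if every object of ${}^C\mathcal{M}$ is semisimple (equivalently, ${}^C\mathcal{M}$ is a spectral, i.e.\ semisimple, Grothendieck category); this is recorded in the discussion preceding Corollary \ref{c:VRcomod} and is available from \cite{DNR}.

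First I would do (i)$\Rightarrow$(ii) and (i)$\Rightarrow$(iii): if $C$ is cosemisimple, then every left $C$-comodule $M$ is semisimple, hence by the preceding Proposition $M$ is (strongly) self-${\rm Soc}(M)$-split and dual (strongly) self-${\rm Soc}(M)$-split. For (ii)$\Rightarrow$(i), I would run the Proposition in the other direction: if every left $C$-comodule $M$ is (strongly) self-${\rm Soc}(M)$-split, then every such $M$ is semisimple, so every object of ${}^C\mathcal{M}$ is semisimple and therefore $C$ is cosemisimple. The implication (iii)$\Rightarrow$(i) is identical, using the dual splitting condition in the Proposition instead.

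The only point that needs a little care, though it is not a genuine obstacle, is keeping the parenthetical ``strongly'' variants consistent: in each appeal to the Proposition one fixes the same variant (plain throughout, or strong throughout), which is legitimate because the Proposition's equivalences hold simultaneously in both variants. No categorical argument beyond the preceding Proposition and the classical description of cosemisimple coalgebras is required.
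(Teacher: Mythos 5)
Your proposal is correct and is exactly the argument the paper intends: the corollary is stated without proof precisely because it follows from the preceding Proposition applied objectwise, together with the standard fact that $C$ is cosemisimple if and only if every left $C$-comodule is semisimple. Your remark about applying the same (plain or strong) variant of the Proposition consistently in each implication is the only point of care, and you handle it as the paper does.
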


\begin{prop} Let $C$ be such that $C^*$ is semisimple. Then the following are equivalent:
\begin{enumerate}[(i)]
\item $C$ is cosemisimple.
\item Every right $C^*$-module $M$ is (strongly) self-${\rm Rat}(M)$-split.
\item Every right $C^*$-module $M$ is dual (strongly) self-${\rm Rat}(M)$-split.
\end{enumerate}
\end{prop}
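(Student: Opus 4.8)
The plan is to observe that $\mathrm{Rat}$ is a hereditary preradical of $\mathrm{Mod}(C^*)$, so that for every right $C^*$-module $M$ the sequence $0\to\mathrm{Rat}(M)\to M\to M/\mathrm{Rat}(M)\to 0$ is fully invariant and Theorem~\ref{t:key} applies to it; under the hypothesis the classical coalgebra/dual-algebra dictionary will force $\mathrm{Rat}(M)=M$ for every $M$, which together with Remark~\ref{r:Rickart} makes (ii) and (iii) immediate, while (i) falls straight out of the structure of $C^*$.

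First I would extract the structural consequences of the assumption that $C^*$ is semisimple. Since $J(C^*)$ is the annihilator in $C^*$ of $\mathrm{corad}(C)$ and $J(C^*)=0$, one gets $\mathrm{corad}(C)=C$, i.e.\ $C$ is cosemisimple; this already yields (i). Moreover $C$ is then the direct sum of its simple subcoalgebras, each of which is finite dimensional, and since a semisimple ring is a finite product of simple artinian rings this direct sum is necessarily finite, so $C$ is finite dimensional. Consequently the comodule category ${}^C\mathcal{M}=\mathrm{Rat}(\mathrm{Mod}(C^*))$ coincides with $\mathrm{Mod}(C^*)$ (see \cite[Chapter~2]{DNR}); in other words every right $C^*$-module is rational, so $\mathrm{Rat}(M)=M$ for every right $C^*$-module $M$.

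Now (ii) and (iii) can be read off. With $\mathrm{Rat}(M)=M$, being (strongly) self-$\mathrm{Rat}(M)$-split is being (strongly) self-$M$-split, which holds for every object, since $N$ is strongly $N$-$N$-split (the first example of Section~3); hence (ii) holds. Likewise, being dual (strongly) self-$\mathrm{Rat}(M)$-split is being dual (strongly) self-$M$-split, equivalently dual (strongly) self-Rickart by Remark~\ref{r:Rickart}. Since $C^*$ is semisimple, every right $C^*$-module $M$ is semisimple, so the image of every endomorphism of $M$ is a direct summand and $M$ is dual self-Rickart; the strong version is obtained exactly as in the proof of Corollary~\ref{c:semis}. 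Hence (iii) holds as well, and as (i), (ii), (iii) all hold under the hypothesis they are in particular equivalent. (If a genuine cyclic proof is preferred, (ii)$\Rightarrow$(i) and (iii)$\Rightarrow$(i) are then vacuous, while (i)$\Rightarrow$(ii) and (i)$\Rightarrow$(iii) are precisely the arguments just given, which in fact do not use (i).)

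The step I expect to need the most care is the reduction $\mathrm{Rat}(M)=M$: it rests on the two facts that simple subcoalgebras are finite dimensional and that semisimple rings are artinian, which together force $C$ to be finite dimensional and hence ${}^C\mathcal{M}$ to be all of $\mathrm{Mod}(C^*)$. The only other delicate point is the ``strongly'' clause of (iii), which should be handled precisely as the corresponding clause of Corollary~\ref{c:semis}.
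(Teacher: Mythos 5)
Your treatment of the plain (non-parenthesized) statement is correct, but it follows a genuinely different route from the paper's. The paper proves (ii)$\Rightarrow$(i) and (iii)$\Rightarrow$(i) by noting, via Theorem \ref{t:key}, that either condition makes ${\rm Rat}(M)$ a direct summand of every right $C^*$-module $M$, and then invoking the coalgebra splitting theorem \cite[Theorem~3.4]{NT} to conclude that $C$ is finite dimensional, whence $C$ is cosemisimple because $C^*$ is semisimple \cite[Exercise~3.1.7]{DNR}; the reverse implications are the ``clear'' part. You instead deduce directly from the standing hypothesis that $C$ is cosemisimple (via $J(C^*)=C_0^{\perp}$, $C_0$ the coradical) and finite dimensional (simple subcoalgebras are finite dimensional, and a semisimple, hence artinian, ring cannot be an infinite product), so that ${\rm Rat}(M)=M$ for every $M$ and all three conditions hold outright. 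That argument is sound and shorter -- it bypasses both Theorem \ref{t:key} and \cite{NT} -- and it exposes that, as stated, the hypothesis already implies (i), (ii) and (iii) in their non-strong form, so the equivalence is vacuous. What the paper's detour buys is robustness: in its proof it is the splitting of ${\rm Rat}$, not the hypothesis on $C^*$, that produces finite dimensionality, so that step would survive a weakening of the hypothesis.

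The genuine gap is the strong clause of (iii). It cannot be obtained ``exactly as in the proof of Corollary \ref{c:semis}'': that proof hinges on the claim that over a semisimple ring every module is (weak) duo, which is false (in $k^2$ over $k$ the summand $k\times 0$ is moved by the swap endomorphism, hence is not fully invariant). Concretely, once ${\rm Rat}(M)=M$, being dual strongly self-${\rm Rat}(M)$-split means being dual strongly self-Rickart (Remark \ref{r:Rickart}), i.e.\ the image of every endomorphism is a fully invariant direct summand; taking $C=k$ (so $C^*=k$ is semisimple and $C$ is cosemisimple) and $M=k^2$, the image of the first projection is a direct summand that is not fully invariant, so (iii) in its strong reading fails while (i) holds. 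More generally $M=S\oplus S$, with $S$ a simple comodule, gives such a failure for every nonzero $C$ satisfying the hypothesis, so no argument can close this gap: the strong version of the equivalence of (i) and (iii) is false, and the defect is inherited from the paper itself (its ``the rest of the proof is clear'' and the strong clause of Corollary \ref{c:semis} suffer from the same problem). In your write-up, the strong parenthetical is fine for (ii) -- there it is automatic, since every object is strongly self-$N$-split -- but for (iii) it should be dropped or explicitly flagged as failing.
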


\begin{proof} If every right $C^*$-module $M$ is (strongly) self-${\rm Rat}(M)$-split or 
dual (strongly) self-${\rm Rat}(M)$-split, then 
${\rm Rat}(M)$ is a direct summand of $M$ for every every right $C^*$-module $M$ by Theorem \ref{t:key}. 
Then $C$ is finite dimensional by \cite[Theorem~3.4]{NT}. Since $C^*$ is semisimple, 
it follows that $C$ is cosemisimple \cite[Exercise~3.1.7]{DNR}. The rest of the proof is clear.  
\end{proof}

\begin{prop} Let $C$ be hereditary and let $P$ be a projective left $C$-comodule. 
Then $P^*=\Hom_k(P,k)$ is a dual self-${\rm Rat}({}_{C^*}P^*)$-split right $C$-comodule. 
\end{prop}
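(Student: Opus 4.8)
The plan is to reduce the statement to Theorem~\ref{t:key}(2). Recall from \cite[Chapter~2]{DNR} that, under the identification of left $C$-comodules with rational right $C^*$-modules, the $k$-dual $P^*=\Hom_k(P,k)$ acquires a natural left $C^*$-module structure, and that ${\rm Rat}$ is a hereditary preradical of the category ${}_{C^*}\mathcal{M}$ of left $C^*$-modules. Hence, by Proposition~\ref{p:fiprerad}, the inclusion $F:={\rm Rat}({}_{C^*}P^*)\to P^*$ is a fully invariant kernel, so that $0\to F\to P^*\to P^*/F\to 0$ is a fully invariant short exact sequence in ${}_{C^*}\mathcal{M}$; here $F$, being rational, is naturally a right $C$-comodule. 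By Theorem~\ref{t:key}(2) it then suffices to prove that (a) this short exact sequence splits, i.e. $P^*\cong F\oplus D$ for some $D$, and (b) $F$ is dual self-Rickart. (Since ${}^C\mathcal{M}$ and $\mathcal{M}^C$ are full subcategories of module categories closed under subobjects and quotients, dual self-Rickartness of $F$ may be tested equally in ${}_{C^*}\mathcal{M}$ or in $\mathcal{M}^C$, endomorphisms, cokernels and splittings being the same in both.)

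For (b) I would argue as in the proof of Theorem~\ref{t:heredR}(2). Since $C$ is hereditary it is a left and right hereditary coalgebra \cite{NTZ}, so $\mathcal{M}^C$ is co-hereditary: every factor object of an injective right $C$-comodule is injective. It is therefore enough to know that $F$ is an injective object of $\mathcal{M}^C$, for then, given any endomorphism $f$ of $F$, the object ${\rm Im}(f)\cong F/{\rm Ker}(f)$ is injective, hence a direct summand of $F$, so ${\rm coker}(f)$ is a retraction and $F$ is dual self-Rickart by Lemma~\ref{l:kc}. The injectivity of $F={\rm Rat}({}_{C^*}P^*)$ in $\mathcal{M}^C$, together with the splitting (a), is to be extracted from the duality $\Hom_k(-,k)$ between finite-dimensional left and right $C$-comodules, which is exact and interchanges projectives with injectives \cite[Chapter~2]{DNR}: writing a projective left $C$-comodule as a coproduct $P=\bigoplus_{i\in I}P_i$ of finite-dimensional projective left $C$-comodules, one has $P^*=\prod_{i\in I}P_i^*$ with each $P_i^*$ a finite-dimensional injective right $C$-comodule, and one must then show that ${\rm Rat}(\prod_{i\in I}P_i^*)$ is injective in $\mathcal{M}^C$ and a direct summand of $P^*$.

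The step I expect to be the main obstacle is precisely this last one: identifying $F={\rm Rat}({}_{C^*}P^*)$ concretely and establishing both its injectivity in $\mathcal{M}^C$ and the fact that it splits off $P^*$ as a $C^*$-module. The difficulty is that the rational part of an infinite product $\prod_{i\in I}P_i^*$ of finite-dimensional comodules is in general strictly larger than the coproduct $\bigoplus_{i\in I}P_i^*$, and a priori need not be a module-theoretic direct summand without further input; I expect to handle it using the structure theory of projective comodules and the behaviour of the rational (hereditary) torsion theory on ${}_{C^*}\mathcal{M}$ under products and injective envelopes, as developed in \cite[Chapters~2--3]{DNR}. Once (a) and (b) are in place, Theorem~\ref{t:key}(2) yields that $P^*$ is dual self-${\rm Rat}({}_{C^*}P^*)$-split, which is the assertion of the proposition.
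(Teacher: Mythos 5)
Your overall strategy coincides with the paper's: apply Theorem \ref{t:key}(2) to the fully invariant short exact sequence $0\to {\rm Rat}({}_{C^*}P^*)\to P^*\to P^*/{\rm Rat}({}_{C^*}P^*)\to 0$ (fully invariant by Proposition \ref{p:fiprerad}, since ${\rm Rat}$ is a preradical), so that everything reduces to (a) the splitting of this sequence and (b) dual self-Rickartness of ${\rm Rat}({}_{C^*}P^*)$; and your derivation of (b) from injectivity of ${\rm Rat}({}_{C^*}P^*)$ in $\mathcal{M}^C$, using that the comodule category over a hereditary coalgebra is cohereditary \cite{NTZ}, is exactly the content the paper imports via \cite[Corollary~4.9]{C-K}. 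The genuine gap is that you never establish the one fact the whole argument rests on: that ${\rm Rat}({}_{C^*}P^*)$ is an injective right $C$-comodule and that it splits off $P^*$. You explicitly defer this as the ``main obstacle'' to be handled later by the structure theory of \cite{DNR}, so what you have is a reduction, not a proof. In the paper this is precisely the point where \cite[Corollary~2.4.18]{DNR} is invoked (injectivity of ${\rm Rat}({}_{C^*}P^*)$ for $P$ a projective comodule), after which the direct-summand property and dual self-Rickartness follow and Theorem \ref{t:key} concludes. Without that input neither (a) nor (b) is available, and your own observation that the rational part of an infinite product of finite-dimensional comodules is in general strictly larger than the coproduct shows that the naive duality computation cannot supply it.

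A secondary unjustified step: you write the projective left $C$-comodule $P$ as a coproduct of finite-dimensional projective comodules. Projectivity only gives that $P$ is a direct summand of a coproduct of finite-dimensional comodules (split the canonical epimorphism from the sum of its finite-dimensional subcomodules onto $P$); a decomposition of $P$ itself into finite-dimensional projective summands is a nontrivial structural claim that would also need proof. Even granting it, the passage from $\prod_{i\in I}P_i^*$ to its rational part is exactly the missing injectivity-and-splitting statement above, so the proposal's correct skeleton does not close the argument.
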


\begin{proof} By \cite[Corollary~2.4.18]{DNR}, ${\rm Rat}({}_{C^*}P^*)$ is an injective right $C$-comodule. 
Then ${\rm Rat}({}_{C^*}P^*)$ is a direct summand of $P^*$ and dual self-Rickart by \cite[Corollary~4.9]{C-K}. 
Hence $P^*$ is a dual self-${\rm Rat}({}_{C^*}P^*)$-split right $C$-comodule by Theorem \ref{t:key}.
\end{proof}

\begin{prop} Let $C$ be left semiperfect hereditary and let $F$ be a flat left $C$-comodule. 
Then $F^*=\Hom_k(F,k)$ is a dual self-${\rm Rat}({}_{C^*}F^*)$-split right $C$-comodule. 
\end{prop}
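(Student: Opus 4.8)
The plan is to reduce the statement to the preceding proposition (the one for projective comodules) by observing that, over a left semiperfect coalgebra, a flat left $C$-comodule is automatically projective.

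First I would use that a left semiperfect coalgebra $C$ has the property that ${}^C\mathcal{M}$ has enough projectives (\cite[Theorem~3.2.3]{DNR}); since moreover ${}^C\mathcal{M}$ is locally finite, every left $C$-comodule admits a projective cover, and then — exactly as in Bass's characterization of right perfect rings — every flat left $C$-comodule is projective. Concretely, given a flat left $C$-comodule $F$ with projective cover $0\to K\to P\to F\to 0$, flatness of $F$ makes this sequence pure, so $K$ is a pure subcomodule of the projective comodule $P$; combined with $K$ being superfluous in $P$, one gets $K=0$, i.e. $F\cong P$ is projective. (I would cite the appropriate source for this fact in the comodule setting rather than reprove it in detail.)

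Having established that $F$ is a projective left $C$-comodule, and since $C$ is hereditary by hypothesis, I would apply the preceding proposition verbatim with $F$ in place of $P$: by \cite[Corollary~2.4.18]{DNR}, ${\rm Rat}({}_{C^*}F^*)$ is an injective right $C$-comodule; by \cite[Corollary~4.9]{C-K} it is therefore a direct summand of $F^*$ and is dual self-Rickart; and Theorem~\ref{t:key} then yields that $F^*=\Hom_k(F,k)$ is a dual self-${\rm Rat}({}_{C^*}F^*)$-split right $C$-comodule.

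The main obstacle is the first step, namely making rigorous (or locating the exact reference for) the implication ``$C$ left semiperfect $\Rightarrow$ every flat left $C$-comodule is projective'': one must fix the notion of flatness used for comodules and check that the projective cover/purity argument carries over to ${}^C\mathcal{M}$. Everything after that reduction is just a direct invocation of the preceding proposition together with Theorem~\ref{t:key}.
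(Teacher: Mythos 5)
Your endgame coincides with the paper's, but your first step does not: the paper makes no reduction to the projective case. Instead it quotes, from \cite[Theorem~4.6]{CS07} together with \cite[Corollary~3.4]{CPT}, the characterization that $C$ is left semiperfect if and only if ${\rm Rat}({}_{C^*}F^*)$ is an injective right $C$-comodule for \emph{every flat} left $C$-comodule $F$; injectivity then makes ${\rm Rat}({}_{C^*}F^*)$ a direct summand of $F^*$ which is dual self-Rickart by \cite[Corollary~4.9]{C-K}, and Theorem~\ref{t:key} concludes, exactly as in your last paragraph. Note also that on the paper's route the hereditary hypothesis plays no visible role in the argument (the injectivity comes from semiperfectness via the cited equivalence), whereas your route genuinely uses it through the projective-comodule proposition and \cite[Corollary~2.4.18]{DNR}.

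The place where your proposal has a real gap is precisely the step you flag as the ``main obstacle'': the claim that over a left semiperfect coalgebra every flat left $C$-comodule is projective. Two points in your sketch are not justified as stated. First, ``${}^C\mathcal{M}$ has enough projectives and is locally finite, hence every left $C$-comodule admits a projective cover'' is not a formal consequence of those two properties; what is needed is the theorem, specific to semiperfect coalgebras, that semiperfectness yields projective covers of \emph{arbitrary} (not merely finite-dimensional) comodules, and this requires a precise reference. Second, ``$K$ pure and superfluous in the projective $P$ implies $K=0$'' is not an elementary purity argument: over rings it is exactly Bass's lemma that a flat module possessing a projective cover is projective, proved via Villamayor's criterion and an automorphism argument, and one must verify both that the purity-based notion of flatness is the one used for comodules and that this argument transfers to ${}^C\mathcal{M}$. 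Since these facts constitute the entire substance of your reduction, and their natural source is the same reference \cite{CS07} on which the paper relies, your proof is correct in outline but rests on an unproved and unreferenced key claim; once that claim is properly sourced, the remainder (the verbatim application of the preceding proposition and Theorem~\ref{t:key}) is fine.
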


\begin{proof} By \cite[Theorem~4.6]{CS07} and \cite[Corollary~3.4]{CPT}, 
$C$ is left semiperfect if and only if ${\rm Rat}({}_{C^*}F^*)$ is an injective right $C$-comodule 
for every every flat left $C$-comodule $F$. Then ${\rm Rat}({}_{C^*}F^*)$ is a direct summand of $F^*$ 
and dual self-Rickart by \cite[Corollary~4.9]{C-K}. 
Hence $F^*$ is a dual self-${\rm Rat}({}_{C^*}F^*)$-split right $C$-comodule by Theorem \ref{t:key}.
\end{proof}


\begin{thebibliography}{99}

\bibitem{AH} Sh. Asgari and A. Haghany, \emph{$t$-Rickart and dual $t$-Rickart modules}, 
Algebra Colloq. {\bf 22} (Spec 1) (2015), 849--870.  

\bibitem{BKN} L. Bican, T. Kepka and P. Nemec, Rings, modules and preradicals, Marcel Dekker, New York, 1982.

\bibitem{Chase} S.U. Chase, {\it Direct products of modules}, Trans. Amer. Math. Soc. {\bf 97} (1960), 457--473.

\bibitem{CKT2} S. Crivei, D. Keskin T\"ut\"unc\"u and R. Tribak, 
\emph{Transfer of splitness with respect to a fully invariant short exact sequence in abelian categories}, preprint, 2018.

\bibitem{C-K} S. Crivei and A. K\"or, \emph{Rickart and dual Rickart objects in abelian categories}, Appl. Categ.
Structures {\bf 24} (2016), 797--824.

\bibitem{CO} S. Crivei and G. Olteanu, \emph{Rickart and dual Rickart objects in abelian categories: Transfer via functors}, 
Appl. Categ. Structures, 2018. DOI: 10.1007/s10485-017-9509-8

\bibitem{CO1} S. Crivei and G. Olteanu, \emph{Strongly Rickart objects in abelian categories}, Comm. Algebra, 2018.
DOI: 10.1080/00927872.2018.1439046. 

\bibitem{CPT} S. Crivei, M. Prest and B. Torrecillas, \emph{Covers in finitely accessible categories}, Proc. Amer. Math.
Soc. {\bf 138} (2010), 1213--1221.

\bibitem{CS07} J. Cuadra and D. Simson, \emph{Flat comodules and perfect coalgebras}, Comm. Algebra {\bf 35} (2007), 3164--3194.

\bibitem{DNR} S. D\u asc\u alescu, C. N\u ast\u asescu and \c S. Raianu, Hopf algebras. An introduction, Marcel Dekker,
New York, 2001.

\bibitem{DNT} S. D\u asc\u alescu, C. N\u ast\u asescu and A. Tudorache, \emph{A note on regular objects in
Grothendieck categories}, Arab. J. Sci. Eng. {\bf 36} (2011), 957--962.

\bibitem{DNTD} S. D\u asc\u alescu, C. N\u ast\u asescu, A. Tudorache and L. D\u au\c s, \emph{Relative regular objects
in categories}, Appl. Categ. Structures {\bf 14} (2006), 567--577.

\bibitem{DNV} L. D\u au\c s, C. N\u ast\u asescu and F. Van Oystaeyen, \emph{$V$-categories: applications to graded
rings}, Comm. Algebra {\bf 37} (2009), 3248--3258.

\bibitem{Atani12} S. Ebrahimi Atani, M. Khoramdel and S. Dolati Pish Hesari, \emph{$T$-Rickart modules},
Colloq. Math. {\bf 128} (2012), 87--100.

\bibitem{Atani14} S. Ebrahimi Atani, M. Khoramdel and S. Dolati Pish Hesari, \emph{On strongly extending modules}, 
Kyungpook Math. J. {\bf 54} (2014), 237--247.

\bibitem{Atani16} S. Ebrahimi Atani, M. Khoramdel and S. Dolati Pish Hesari, \emph{$T$-dual Rickart modules}, 
Bull. Iranian Math. Soc. {\bf 42} (2016), 627--642.

\bibitem{Goodearl} K. R. Goodearl, Ring theory: nonsingular rings and modules, Marcel Dekker, New York, 1976.

\bibitem{Hattori} A. Hattori, \emph{A foundation of torsion theory for modules over general rings},
Nagoya Math. J. {\bf 17} (1960), 147--158.

\bibitem{Kap} I. Kaplansky, {\it Modules over Dedekind rings and valuation rings}, Trans. Amer. Math. Soc. {\bf 72}
(1952), 327--340.

\bibitem{Kelly} G. M. Kelly, \emph{Monomorphisms, epimorphisms and pull-backs}, J. Austral. Math. Soc. {\bf 9}
(1969), 124--142.

\bibitem{KT} D. Keskin T\"ut\"unc\"u and R. Tribak, \emph{On dual Baer modules}, Glasgow Math. J. {\bf 52} (2010),
261--269.

\bibitem{LRR10} G. Lee, S. T. Rizvi and C. Roman, \emph{Rickart modules}, Comm. Algebra {\bf 38} (2010), 4005--4027.

\bibitem{LRR11} G. Lee, S. T. Rizvi and C. Roman, \emph{Dual Rickart modules}, Comm. Algebra {\bf 39} (2011), 4036--4058.

\bibitem{LRR12} G. Lee, S. T. Rizvi and C. Roman, \emph{Direct sums of Rickart modules}, J. Algebra {\bf 353} (2012),
62--78.

\bibitem{Maeda} S. Maeda, \emph{On a ring whose principal right ideals generated by idempotents form a lattice}, J. Sci.
Hiroshima Univ. Ser. A {\bf 24} (1960), 509--525.

\bibitem{M} B. Mitchell, Theory of categories, Academic Press, New York, 1965.

\bibitem{NT} C. N\u ast\u asescu and B. Torrecillas, \emph{The splitting problem for coalgebras}, 
J. Algebra {\bf 281} (2004), 144--149.

\bibitem{NTZ} C. N\u ast\u asescu, B. Torrecillas and Y. H. Zhang, \emph{Hereditary coalgebras}, Comm. Algebra {\bf 24}
(1996), 1521--1528.

\bibitem{OHS} A. \c C. \"Ozcan, A. Harmanc\i \ and P. F. Smith, \emph{Duo modules}, Glasgow Math. J. {\bf 48} (2006), 533--545. 

\bibitem{Rickart} C. E. Rickart, \emph{Banach algebras with an adjoint operation}, Ann. of Math. {\bf 47} (1946), 528--550.

\bibitem{RR04} S. T. Rizvi and C. Roman, \emph{Baer and quasi-Baer modules}, Comm. Algebra {\bf 32} (2004), 103--123.

\bibitem{S} B. Stenstr\"{o}m, Rings of quotients, Grundlehren der Math., {\bf 127}, Springer, Berlin, 1975.

\bibitem{TV} Y. Talebi and N. Vanaja, \emph{The torsion theory cogenerated by $M$-small modules}, 
Comm. Algebra {\bf 30} (2002), 1449--1460.

\bibitem{Teply} M. L. Teply, \emph{Generalizations of the simple torsion class and the splitting properties}, 
Canad. J. Math. {\bf 27} (1975), 1056--1074.

\bibitem{Ungor} B. Ungor, S. Halicioglu and A. Harmanci,
\emph{Modules in which inverse images of some submodules are direct summands}, Comm. Algebra {\bf 44} (2016), 1496--1513.

\bibitem{Ungor2} B. Ungor, S. Halicioglu and A. Harmanci,
\emph{A dual approach to the theory of inverse split modules}, J. Algebra Appl. {\bf 17} (2018), 1850148 (17 pages).

\bibitem{V} N. Vanaja, \emph{All finitely generated $M$-subgenerated modules are extending}, 
Comm. Algebra {\bf 24} (1996), 543--572.

\bibitem{Wang} M. Wang, \emph{Some studies on $QcF$-coalgebras}, International Symposium on Ring Theory (Kyongju, 1999),
Trends Math., Birkh\"auser, Boston, 2001, pp. 393--399.

\bibitem{WangY} Y. Wang, \emph{Strongly lifting modules and strongly dual Rickart modules}, 
Front. Math. China {\bf 12} (2017), 219--229.

\bibitem{Wis} R. Wisbauer, Foundations of module and ring theory, Gordon and Breach, Reading, 1991.

\end{thebibliography}
\end{document}